\newcommand{\E}{\mathbf{E}}
\newcommand{\p}{\mathbf{P}}
\newcommand{\V}{\mathbf{Var}}
\newcommand{\indep}{\perp\!\!\!\!\perp} 
\newcommand{\e}{\mathrm{e}}
\newcommand{\diff}{\mathrm{d}}
\newcommand{\given}{\,\middle|\,}
\newtheorem{prop}[theorem]{Proposition}
\begin{document}

\title{Inference In High-dimensional Single-Index Models Under Symmetric Designs}

\author{\name Hamid Eftekhari \email hamidef@umich.edu \\
       \name Moulinath Banerjee \email moulib@umich.edu \\
       \name Ya'acov Ritov \email yritov@umich.edu \\
       \addr Department of Statistics\\
       University of Michigan\\
       Ann Arbor, MI 48109, USA
}
\editor{Arnak Dalalyan}

\maketitle

\begin{abstract}
The problem of statistical inference for regression coefficients in a high-dimensional single-index model is considered. Under elliptical symmetry, the single index model can be reformulated as a proxy linear model whose regression parameter is identifiable.  We construct estimates of the regression coefficients of interest that are similar to the debiased lasso estimates in the standard linear model and exhibit similar properties: $\sqrt{n}$-consistency and asymptotic normality. The procedure completely bypasses the estimation of the unknown link function, which can be extremely challenging depending on the underlying structure of the problem. Furthermore, under Gaussianity, we propose more efficient estimates of the coefficients by expanding the link function in the Hermite polynomial basis. Finally, we illustrate our approach via carefully designed simulation experiments.
\end{abstract}

\begin{keywords}
sparsity, debiased inference, compressed sensing, Hermite polynomials
\end{keywords}

\section{Introduction and Background}
The single-index model (SIM) has been the subject of extensive investigation in both the statistics and econometrics literatures over the last few decades. It generalizes the linear model to scenarios where the regression function $\E[y \mid x]$ is not necessarily linear in the covariates but depends on them through an unknown transformation of $\langle x, \tau \rangle$, where $\tau$ is the vector of regression co-efficients: $\E[y \mid x] = g(\langle x, \tau \rangle)$. While allowing broad generality in the structure of the mean response, the single-index model also circumvents the curse of dimensionality by modeling the mean response in terms of a low-dimensional functional of $x$. This precludes the need for fitting high-dimensional nonparametric models. An excellent review of single-index models appears, for example, in the work of \cite{horowitz2009semiparametric}.

It is well-known that the parameter $\tau$ is, in general, unidentifiable in the single-index model, since any scaling of $\tau$ can always be absorbed into the function $g$. Therefore, some identifiability constraints are imposed for statistical estimation and inference, a popular choice being to set $\|\tau\|= 1$ and $\tau_1 \geq 0$.  General schemes for estimating $\tau$ involve optimizing an appropriate loss function (likelihood/pseudo-likelihood/least squares) in $(\underline \tau, \underline g)$ (generic parameter values) by alternately updating estimates of $\underline \tau$ and $\underline g$ \citep{carroll1997generalized}, or through a profile likelihood approach: see, e.g. the WNLS estimator in Section 2.5 of the work of \cite{horowitz2009semiparametric}. In any case, the estimation of $g$ figures critically in most estimation schemes. Inference on $\tau$ requires appropriate regularity assumptions on $g$, typically involving smoothness constraints, and while $g$ is only estimable at rate slower than $\sqrt{n}$, the parameter $\tau$ possesses $\sqrt{n}$-consistent asymptotically normal and efficient estimates, under certain regularity conditions. 

High-dimensional single index models have also attracted interest with various authors studying variable selection, estimation and inference using penalization schemes. \cite{ganti2015learning} use $\ell_1$ penalized estimates for learning high-dimensional index models were proposed and theoretical guarantees on excess risk for bounded responses was provided. \cite{foster2013variable} proposed an algorithm for variable selection in a
monotone single index model via the adaptive lasso. \cite{luo2016forward} proposed a penalized forward selection technique for high-dimensional single index models with a monotone link function. Cubic B-splines were employed by \cite{cheng2017bs} for estimating the single index model in conjunction with a SICA (smooth integration of counting and absolute deviation) penalty function for variable selection.  \cite{radchenko2015high} studied simultaneous variable selection and estimation in high-dimensional SIMs using a penalized least-squares criterion, with the link function estimated via B-splines, and provided theoretical results on the rate of convergence. \cite{yang17a} used a generalized version of Stein's lemma that allows estimation of the regression vector under known but not necessarily Gaussian designs. \cite{dudeja2018learning} and \cite{pananjady2019single} consider estimation in single- and multi-index models by expanding the unknown link function in the Hermite polynomial basis. 
More recently, \cite{hirshberg2018debiased} have proposed a method for average partial effect estimation in high-dimensional single-index models that is $\sqrt{n}$-consistent and asymptotically unbiased under sparsity assumptions on the regression coefficient, and to the best of our knowledge, this is the only work that provides asymptotic distributions in the high-dimensional setting. However, their method critically uses the form of the link function and also requires it to be adequately differentiable. 

In this paper, we develop an inference scheme for the regression coefficients of a high-dimensional single-index model with minimal restrictions on the (potentially random) link function---indeed, even discontinuous link functions are allowed---that \emph{completely bypasses} the estimation of the link. Thus, by dispensing with most regularity conditions on the link function, our approach can accommodate diverse underlying model structures. The price one pays for the feasibility of such an agnostic approach is an elliptically symmetric restriction on $X$. This assumption may be overly restrictive in applications where the entries of $X$ are determined by nature. However, in many applications one is allowed to design the measurement matrix. For example, in many applications of compressed sensing, Gaussian random matrices have been used as the measurement matrix, see for example the work of \cite{candes2008introduction} for an introduction to compressed sensing, the survey by \cite{li2018survey} on one-bit compressed sensing and the work of \cite{baraniuk2007compressive}, \cite{achim2014reconstruction} for applications in radar and ultrasound imaging.

Next we introduce our model. Consider the semiparametric single-index model:
\begin{align}\label{model}
y_i = f_i(\langle x_i , \tau \rangle) , \quad i \in \{1,2, \dots, n\},
\end{align}
where $f_i :\mathbf{R} \rightarrow\mathbf{R}$ are iid realizations of an unknown random function $f$, independent of $x_i$, and $\tau \in \mathbf{R}^p$ is an unknown parameter whose direction is the object of estimation\footnote{Equivalently, one can write $y_i = f(\langle x_i, \tau \rangle, u_i)$ for a deterministic function $f$ and iid standard uniform random variables $u_i \indep x_i$.}. Assume that $x_i \sim_{iid} N(0_p, \Sigma)$ for a positive definite matrix $\Sigma$. While $\tau$ is not identifiable in this model\footnote{Identifiability is discussed in more detail in Appendix \ref{identifiability}.}, an appropriate scalar multiple:  $\beta = \mu \tau$ (where $\mu$ will be defined shortly) is. As shown below, the new parameter $\beta$ turns out to be the vector of average partial derivatives of the regression function with respect to the covariates in the model, under certain conditions.

\textbf{Notation. }We will write $Y = (y_1, \dots, y_n)^T$, and let $X$ denote the matrix with $x_1^T, \dots, x_n^T$ in its rows. For subsets of indices $I\subset \{1,\dots, n\}, J \subset \{1, \dots, p\}$ we let $X_{I,J}$ be the submatrix of $X$ containing the rows with indices in $I$ and columns with indices in $J$. When $I$ or $J$ are singletons, we drop the brackets and identify $X_{I,k}$ with $X_{I,\{k\}}$ for $k=1,\dots,p$. Negative indices are used to exclude columns, so that for example $X_{i,-j}$ is the same as $X_{i,\{j \}^c}$. We denote by $\mathrm{e}_j$ the $j$-th element of the standard basis of $\mathbf{R}^p$. Finally, for two sequences $t_n,s_n$ we write $t_n \lesssim s_n$ to mean $t_n \leq c s_n$ for $n\geq 1$ and a constant $c > 0$ that does not depend on $n$. 

For Gaussian covariates, there is a specific feature of this model that obviates the need to estimate the link function that we now describe. Throughout the paper we assume $\|\Sigma^{\frac{1}{2}} \tau\|_2 = 1$, as otherwise we can rescale $\tau$ and $f$ appropriately without changing $\beta$. Define: 
\begin{align}
\mu &:= \E[ y_n \langle x_n, \tau \rangle ]= \E [f(\zeta) \zeta], \label{mu_def}\\
\beta &:= \mu\tau, \label{beta_def}\\
z &:= Y - X \beta, \label{z_def}
\end{align}
where $\zeta$ is a standard normal variable independent of $f$. 

The parameter of interest $\beta$ can be viewed as an average partial effect. To see this, write $g(\langle x, \tau \rangle ) = \E[y|X = x]$ and assume that $g$ is differentiable. The average partial effect with respect to the $j$-th covariate is then defined as: 
\begin{align*}
\E_x\left[ \frac{\partial}{\partial x_j} \E[y \mid x] \right] &= \tau_j \E_x [g'(\langle x, \tau \rangle)].
\end{align*}
By Stein's lemma \citep{Stein:1981}, $\E [g'(\langle x, \tau \rangle)] = \E \left[\langle x, \tau \rangle g(\langle x, \tau \rangle)\right]$, assuming both expectations exist and using the fact that $\langle x, \tau \rangle \sim N(0,1)$. Thus we have
\begin{align*}
\E_x\left[ \frac{\partial}{\partial x_j} \E[y \mid x] \right] &= \tau_j \E_x\left[  \langle x, \tau \rangle \E[y \mid x]\right] = \tau_j\E [y \langle x, \tau \rangle ]
= \mu \tau_j = \beta_j.
\end{align*}

As we seek to make inference on the importance/relative importance of the components of $\tau$ via estimates 
of $\beta$, we assume henceforth that $\mu \neq 0$.  It can be shown (see \ref{orthoproof} in the appendix) that  
\begin{align}\label{orthogonality}
\E [X^T z] = 0,
\end{align}
which is equivalent to $\E [z_i x_i] = 0$ for all $i = 1,\dots, n$. We therefore have the representation $Y = X\beta + z$ with $X$ and $z$ uncorrelated, which implies that $\beta \in \arg\min_{\beta'} \E\|Y - X\beta'\|_2^2$, thus motivating the use of (penalized) least squares methods for estimation. The above equation will be referred to as the orthogonality property of $X$ and $z$ and will be used subsequently at several places. 

The orthogonality property appears to have been first noted in the work of \cite{brillinger1982generalized} who used it to study the properties of the least squares estimator $\hat{\beta}_{ls}$ in the classical fixed $p$ setting and showed that the estimator was asymptotically normal. \cite{plan2016generalized} studied the estimation of $\beta$ in the $p > n$ setting using a generalized constrained lasso. While their results are quite general, they require knowing the constraint set $K$ over which least squares is performed. Under a different set of assumptions, \cite{thrampoulidis2015lasso} obtained an asymptotically exact expression for the estimation error of the regularized generalized lasso when $X$ has iid $N(0,1)$ entries and $\beta$ is generated according to a density in $\mathbf{R}^p$ with marginals that are independent of $p$.
\newline
Recall that a random vector $V$ is called spherically symmetric if its distribution is invariant to all possible rotations, i.e. $V \equiv_d PV$ for all orthogonal matrices $P$. Say that $X$ has an elliptically symmetric distribution if for some fixed vector $\mu$ and positive definite matrix $\Sigma$, $\Sigma^{-1/2}(X-\mu)$ is spherically symmetric. It turns out that the proxy linear model representation also holds \emph{more generally} when $X$ follows an elliptically symmetric distribution since the proof of the orthogonality property (\ref{orthogonality}) only uses the linearity of conditional expectations:
\begin{align*}
\E[x \mid \langle x, \tau \rangle] = \langle x, \tau \rangle b,
\end{align*}
for a non-random vector $b \in \mathbf{R}^p$. The latter is a well-known property of elliptically symmetric distributions,
 see Appendix \ref{Elliptical-Design} for the 
details. This fact appears in the work of \cite{li1989regression}, who then use the linear model representation to construct asymptotically normal and unbiased estimators of the regression coefficients in a 
fixed dimension parameter setting. More recently, \cite{goldstein2018structured} have studied structured signal recovery (in high dimensions) from a single-index model with elliptically symmetric $X$, which can be viewed as an extension of \cite{plan2016generalized}. 
\newline
Given the linear representation of the model as $ Y = X\beta + z$, it is natural to ask the following questions:
\begin{itemize}
\item
Can the debiasing techniques introduced in the setting of high-dimensional linear models be used for inference in single-index models?
\item
Can one improve on these procedures by going beyond a linear approximation?
\end{itemize} 
Section \ref{Gaussian-design} answers the first question in the affirmative by showing that under Gaussian design variants of the debiased Lasso estimator are consistent and asymptotically normally distributed. The second question is answered in Section \ref{Efficient-Inference} where, under Gaussian design, we improve the estimator in Section \ref{Gaussian-design} by using an estimate of the link function in the debiasing procedure to improve the asymptotic variance. Our simulations show that this reduction in variance can be significant. Finally, in Appendix \ref{Elliptical-Design} we extend the aforementioned results to the case of elliptically symmetric designs with subgaussian tails. 

\section{Inference under Gaussian Design} 
\label{Gaussian-design} 
In this section we apply the debiasing technique to obtain $\sqrt{n}$-consistent estimators of individual coordinates of $\beta$ under the assumption of Gaussian design. Our main theorems assume the existence of pilot estimators of $\beta$ that possess sufficiently fast ($\ell^1$-norm) rates of convergence. Subsection \ref{pilot-estimate} discusses the construction of these pilot estimators. The extension of the results in this section to the case of elliptically symmetric design is considered in Appendix \ref{Elliptical-Design}.

\subsection{Background on Debiased Lasso}

The debiased lasso procedure proposed by \cite{javanmard2014confidence,zhang2014confidence,vandegeer2014} can be motivated as correcting the bias of low-dimensional projection estimators using the lasso estimate. More precisely, suppose that in a linear model $y = \langle x, \beta \rangle + \varepsilon$ the goal is to conduct inference on the first coordinate of $\beta$. In the low-dimensional scenario where $p < n$ and $\mathrm{rank}(X) = p$, the OLS estimate of $\beta_1$ can be written as 
\begin{align*}
\hat{\beta}_1^{OLS} = \frac{u^T Y}{u^T X_{\cdot, 1}},
\end{align*}
 where $u$ is the projection of the first column $X_{\cdot,1}$ on the orthocomplement of the span of $X_{\cdot, 2}, \dots, X_{\cdot, p}$. In the high-dimensional setting where $p > n$, this projection is typically zero, but one may still be able to find a vector $u$ for which $u^T X_{\cdot, 1}$ is large while $\max_{j>1} |u^T X_{\cdot, j}|$ has a slow rate of growth. The bias of this projection estimator can further be reduced by using the Lasso estimate \citep{tibshirani1996regression} of $\beta$:
 \begin{align*}
 \tilde{\beta}_1 &:= \frac{ u^T (Y - X_{\cdot, -1} \hat{\beta}_{-1}^{\text{lasso}} )}{u^T X_{\cdot, 1}} = \hat{\beta}_1^{\text{lasso}} + \frac{ u^T (Y - X \hat{\beta}^{\text{lasso}} )}{u^T X_{\cdot, 1}} ,\\
 \hat{\beta}^{\text{lasso}} &:= \arg \min_{\beta'} \left\{ \frac{1}{2n} \| Y - X\beta' \|_2^2 + \lambda \| \beta' \|_1 \right\}.
 \end{align*}
A natural choice for the projection vector $u$ is $u = X \Sigma^{-1}\mathrm{e}_1$ when $\Sigma$ is known,  used for example by \cite{javanmard2018debiasing}. Our estimator in Section 2.2 uses this choice up to a constant multiple. The analysis of this estimator is similar to the linear model setting and is included here as a stepping stone to the more involved analysis of later sections. 
\newline
The estimator in Section 2.3 uses node-wise lasso to estimate (a multiple) of the first row of $\Sigma^{-1}$. The use of node-wise lasso was suggested previously by \cite{zhang2014confidence,vandegeer2014}. Our estimator is slightly different in that we use sample splitting to break the dependence between the estimate of $\Sigma^{-1}$ and the approximation error $z$, whereas this is not necessary in linear models since the noise and the design are typically assumed to be independent. 
\newline
Finally, in Section 3, we use a higher order expansion of the link function in the Hermite polynomial basis to further reduce the magnitude of the approximation error $z$ to obtain a more efficient estimator than the linear debiased estimator. Hermite polynomials provide a natural basis in our setting because of their orthogonality property under the Gaussian measure. Our estimator combines ideas from non-parametric and high-dimensional statistics and to the best of our knowledge has not been studied previously.

\subsection{Inference on $\beta$ when $\Sigma$ is Known}

We first consider the case where $\Sigma$ is known. In this case, the distribution of $x_i \sim N(0, \Sigma)$ is fully known, and we can therefore compute the $L_2$ projection of any covariate, $x_{n,1}$, over the remaining ones, $x_{n,-1}$. Specifically, define 
\begin{align*}
\gamma := \left(\E [x_{n,-1} x_{n,-1}^T] \right)^{-1} \E[ x_{n,-1} x_{n,1}].
\end{align*}
In words, $\gamma$ is the vector of coefficients when regressing the first covariate on the rest, at the population level. The resulting residuals 
$r_i := x_{i,1} - \langle \gamma, x_{i,-1} \rangle$
satisfy
\begin{align*}
\E [r_i x_{i, -1}^T] = \E[ x_{i,1} x_{i,-1}^T] - \gamma^T \E [x_{i, -1} x_{i,-1}^T] = 0.
\end{align*}

Suppose a sample size of size $2n$ is given\footnote{To avoid the loss of efficiency due to sample splitting, one can swap the role of the two subsamples and take the mean of the resulting estimates, see Remark \ref{sample-splitting} for details.}. Then
\begin{itemize}
\item Compute the residuals $r_i$ as defined above on the first sub-sample $(x_i, y_i)_{i=1}^n$.
\item Compute the lasso \citep{tibshirani1996regression} estimator $\hat{\beta}$ on the second subsample $(x_i, y_i)_{i=n+1}^{2n}$:
\begin{align*}
\hat{\beta} \in \arg \min_{\beta'} \left\{ \frac{1}{2n} \sum_{i=n+1}^{2n} (y_i - \langle x_i, \beta' \rangle )^2 + \lambda \|\beta' \|_1 \right\}
\end{align*}
with\footnote{In practice (and in our simulations) we use cross-validation to choose the regularization parameter $\lambda$.} $\lambda \gtrsim \sigma_x \sigma_z \sqrt{\log(p) / n}$, where $\sigma_x, \sigma_z$ are the subgaussian constants of $x_n$ and $z_n$, respectively.

\end{itemize}

Using these residuals and the pilot estimator $\hat{\beta}$, define a debiased estimator of $\beta_1$ as 
\begin{align}\label{est-def-known}
\tilde{\beta}_1 := \hat{\beta}_1 +  \frac{ \sum_{i=1}^n r_i( y_i - \langle x_i,\hat{\beta} \rangle)}{\sum_{i=1}^n r_i x_{i,1}}.
\end{align}

The following theorem characterizes the asymptotic distribution of $\tilde{\beta}_1$.

\begin{theorem}\label{thm_known_sig}
	Suppose $(x_i, y_i)_{i=1}^{2n}$ follow model (\ref{model}) with $x_i \sim N(0,\Sigma)$ and let 
	\begin{align*}
	\nu^2 =\frac{\E [r_n^2 z_n^2]}{(\E [r_n^2])^2}.
	\end{align*}
	Assume also that the following conditions are satisfied:
	
	\begin{enumerate}
		\item The sparsity of $\beta$ satisfies $s = o(\frac{n}{\log^2(p)})$.
		\item There exist $0 < c, C < \infty$ such that $c \leq \lambda_{\min}(\Sigma) \leq \lambda_{\max}(\Sigma) \leq C$. 
		\item  $\E|y_n|^{2 + \alpha} \leq M < \infty$ for some $\alpha, M > 0$ and all $n \geq 1$.
        \item The pilot estimator $\hat{\beta}$ satisfies
        \begin{align*}
            \|\hat{\beta} - \beta\|_2 \lesssim \sqrt{\frac{s \log(p)}{n}},\quad \text{ with probability } 1-o(1).
        \end{align*}
	\end{enumerate}	
	
	Then
	\begin{align*}
	\sqrt{n}(\tilde{\beta}_1 - \beta_1) = \nu \cdot \Xi_n, \quad \text{ where }\quad \Xi_n \rightarrow_d N(0,1).
	\end{align*}
	
\end{theorem}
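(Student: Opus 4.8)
The plan is to substitute the linear representation $y_i = \langle x_i, \beta \rangle + z_i$ into the defining equation (\ref{est-def-known}) and expand. Writing $\delta := \beta_{-1} - \hat{\beta}_{-1}$ and using $y_i - \langle x_i, \hat{\beta} \rangle = z_i + x_{i,1}(\beta_1 - \hat{\beta}_1) + \langle x_{i,-1}, \delta \rangle$, the term $(\beta_1 - \hat{\beta}_1)\sum_i r_i x_{i,1}$ appearing in the numerator cancels the leading $\hat{\beta}_1 - \beta_1$ after division by $\sum_i r_i x_{i,1}$, leaving the clean decomposition
\[
\sqrt{n}(\tilde{\beta}_1 - \beta_1) = \underbrace{\frac{n^{-1/2}\sum_{i=1}^n r_i z_i}{n^{-1}\sum_{i=1}^n r_i x_{i,1}}}_{\text{noise}} + \underbrace{\frac{n^{-1/2}\sum_{i=1}^n r_i \langle x_{i,-1}, \delta \rangle}{n^{-1}\sum_{i=1}^n r_i x_{i,1}}}_{\text{bias}}.
\]
I would then show that the noise term converges to $\nu \cdot \Xi$ and that the bias term is $o_p(1)$.

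For the common denominator, note that $r_i x_{i,1} = r_i^2 + r_i \langle \gamma, x_{i,-1} \rangle$ with $\E\, r_n \langle \gamma, x_{n,-1} \rangle = 0$ by construction of the residuals, so the law of large numbers gives $n^{-1}\sum_i r_i x_{i,1} \to \E r_n^2$, which Assumption 2 keeps bounded away from zero. For the noise numerator, the summands $r_i z_i$ are i.i.d.\ with mean zero, since $\E\, r_n z_n = \E x_{n,1} z_n - \gamma^T \E x_{n,-1} z_n = 0$ by the orthogonality property (\ref{orthogonality}); their variance is $\E r_n^2 z_n^2$, which is finite by Hölder's inequality applied to the $(2+\alpha)$-moment bound on $y_n$ in Assumption 3 and the Gaussian moments of $r_n$. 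The classical central limit theorem then gives $n^{-1/2}\sum_i r_i z_i \to N(0, \E r_n^2 z_n^2)$ in distribution, and Slutsky's theorem combined with the denominator limit shows the noise term converges to $N(0, \nu^2)$ with $\nu^2 = \E r_n^2 z_n^2 / (\E r_n^2)^2$, i.e.\ to $\nu \cdot \Xi$.

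The crux is the bias term, and this is exactly where sample splitting earns its keep. Because $\hat{\beta}$, hence $\delta$, is computed on the second subsample, it is independent of $\{(r_i, x_{i,-1})\}_{i=1}^n$. Conditioning on $\delta$, the numerator $n^{-1/2}\sum_i r_i \langle x_{i,-1}, \delta \rangle$ is a normalized sum of i.i.d.\ mean-zero variables (using $\E\, r_n x_{n,-1} = 0$) with conditional variance $\E[r_n^2 \langle x_{n,-1}, \delta \rangle^2]$. Since $(r_n, \langle x_{n,-1}, \delta \rangle)$ is jointly Gaussian, this is at most $3\,\E r_n^2 \cdot \delta^T (\E x_{n,-1} x_{n,-1}^T)\delta \lesssim \|\delta\|_2^2$ by Assumption 2. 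A conditional Chebyshev argument then shows the numerator is $O_p(\|\delta\|_2)$, and Assumption 4 together with the sparsity Assumption 1 forces $\|\delta\|_2 \leq \|\hat{\beta} - \beta\|_2 \lesssim \sqrt{s \log(p)/n} = o(1)$. Hence the bias term is $o_p(1)$, and one more application of Slutsky finishes the proof.

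I expect the main obstacle to be controlling the bias term at the rate permitted by the stated sparsity. The naive route bounds the numerator by $\|n^{-1/2}\sum_i r_i x_{i,-1}\|_\infty \|\delta\|_1 \lesssim \sqrt{\log p}\cdot s\sqrt{\log(p)/n}$, which is $o(1)$ only under the far stronger requirement $s = o(\sqrt{n}/\log p)$. Sample splitting lets me bypass this: decoupling $\delta$ from the first-sample design allows me to use the $\ell_2$ error directly through the second-moment calculation above, so that only $\|\delta\|_2 = o(1)$ is needed, which is comfortably implied by Assumption 1. The remaining care will lie in (i) passing rigorously from the conditional Chebyshev bound to an unconditional $o_p(1)$ statement, by restricting to the probability-$(1-o(1))$ event of Assumption 4 and bounding its complement, and (ii) the moment bookkeeping that guarantees finiteness of $\E r_n^2 z_n^2$ and validity of the central limit theorem, both of which are routine given the Gaussianity of $x_n$ and the moment hypothesis on $y_n$.
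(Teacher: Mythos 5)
Your proposal is correct, and it uses the same decomposition as the paper: the identical numerator terms $B = n^{-1/2}\sum_i r_i z_i$ (noise) and $C = n^{-1/2}\sum_i r_i\langle x_{i,-1}, \beta_{-1}-\hat{\beta}_{-1}\rangle$ (bias) over the denominator $A = n^{-1}\sum_i r_i x_{i,1}$, with the same cancellation of $\hat{\beta}_1-\beta_1$. Where you genuinely diverge is the crux step, the bias term $C$. The paper conditions on the second subsample and applies Bernstein's inequality to the subexponential variables $r_i\langle x_{i,-1},\Delta/\|\Delta\|_2\rangle$, which costs a $\sqrt{\log p}$ factor in the tail level; combined with $\|\Delta\|_2 \lesssim \sqrt{s\log(p)/n}$ this yields $|C| \lesssim \log(p)\sqrt{s/n}$, and this is exactly why the theorem assumes $s = o(n/\log^2 p)$. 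Your conditional second-moment/Chebyshev argument avoids the logarithm entirely: since $r_n$ is uncorrelated with $x_{n,-1}$ and the pair is jointly Gaussian, $r_n$ is in fact \emph{independent} of $x_{n,-1}$, so the conditional variance of $C$ is exactly $\E r_n^2 \cdot \delta^T (\E x_{n,-1}x_{n,-1}^T)\delta \lesssim \|\delta\|_2^2$ (your factor of $3$ via Isserlis is a safe overestimate), giving $C = O_p(\|\delta\|_2) = o_p(1)$ under the weaker requirement $s = o(n/\log p)$. So your route is more elementary and buys a slightly better sparsity allowance. What the paper's Bernstein route buys in exchange is robustness: it uses only subgaussianity of $x_n$, so the identical proof transfers verbatim to the elliptically symmetric designs of Appendix \ref{Elliptical-Design}, where uncorrelatedness of $r_n$ and $x_{n,-1}$ no longer implies independence (your route would also survive there, but via a Cauchy--Schwarz fourth-moment bound rather than exact factorization). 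Both arguments lean on sample splitting in the same essential way, and your plan for the conditional-to-unconditional passage (restricting to the probability-$(1-o(1))$ event of Assumption 4) matches the paper's treatment.

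Two points in your write-up need tightening, though neither is fatal. First, your appeals to ``the law of large numbers'' for $A$ and ``the classical central limit theorem'' for $B$ are not literally available, because the law of $(r_n, z_n)$ changes with $n$: this is a triangular array ($p$, $\Sigma$, $\beta$, and the link all depend on $n$). For $A$, Chebyshev with uniformly bounded Gaussian moments fixes this immediately (the paper uses Bernstein instead). For $B$ you need the Lyapunov CLT, which is precisely the role of Assumption 3: the paper verifies $\E|r_n z_n|^{2+\alpha/2} \leq c' < \infty$ via H\"older together with the bound $\|z_n\|_{L^{2+\alpha}} \lesssim 1$ derived from $\E|y_n|^{2+\alpha} \leq M$, so the $\alpha > 0$ there is not decorative --- a bare second-moment CLT would not suffice. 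You gesture at this bookkeeping at the end, so it is an imprecision of statement rather than a missing idea. Second, the Lyapunov normalization implicitly requires $\E r_n^2 z_n^2$ to be bounded away from zero; the paper's proof also glosses over this (the constant $c_{\text{rz}}$ appears without comment), so you are in good company, but it is worth a remark in a careful write-up.
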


\textbf{Discussion of the Assumptions.} The following points clarify the assumptions of the preceding theorem.
\begin{enumerate}
\item (Gaussianity / Elliptical Symmetry). Under the quadratic loss criterion, the population loss $\E[(y - \langle x, \beta' \rangle)^2]$ is minimized at $\beta^\star = \beta + \Sigma^{-1}\E [ (y - \langle x, \beta \rangle )x]$, where $\beta = \mu \tau$ is to be estimated. The bias term $ \Sigma^{-1}\E [ (y - \langle x, \beta \rangle )x]$ vanishes whenever 
\begin{align}
\E[x \mid \langle x, \beta \rangle ] = c \langle x, \beta \rangle \label{cond-linearity}
\end{align}
for a fixed (non-random) vector $c \in \mathbf{R}^p$. This latter condition is satisfied for elliptically symmetric distributions and is the basis of the Lasso procedure used in our work. Details are provided in Appendix \ref{departure}. Section 6 in the work of \cite{li1989regression} analyzes estimation in low dimensional single-index models under departures from elliptical symmetry using arbitrary convex loss functions and establishes upper bounds on the bias of the corresponding M-estimators in terms of
appropriate measures of such departures. 
The authors further consider the empirical counterparts of these measures as practical ways to verify the elliptical symmetry assumption. It is conceivable that similar diagnostic measures can work in our setting. For example, one can use sample splitting to estimate $\beta$ from a subsample and verify an empirical version of (\ref{cond-linearity}) on an independent subsample. These considerations are important for practice and can be the subject of future work. Another interesting extension would be to quantify the asymptotic distribution of the least squares estimator of $\beta$ in our setting under structured (as in adequately parametrized) violations of symmetry. 

\item
If the design matrix $X$ is not centered, but $\E [x_n]$ is known, then one can first center $X$ by using $\tilde{X} = X - \E X$ and redefining the link function:
	\begin{align*}
	\tilde{f}_i( \cdot ) = f_i(\cdot + \langle \E [x_i] , \tau \rangle).
	\end{align*}
	Even though $f_i$ now depends on the mean of $x_i$, it is independent of $x_i$, and hence all the theoretical results in this paper continue to hold.  	
	In simulations we consider a case where $X$ has nonzero mean and is centered on the sample, prior to constructing the estimator (since, in reality, $\E[ X]$ will not be known). While this, strictly speaking, does not fall within the purview of our approach (since the rows of $X - n^{-1} \mathbf{1}_n \mathbf{1}_n^T X$ are no longer independent), our inference procedure is still seen to produce satisfactory results in simulations. 

\item
Assumption 3 on the spectrum of $\Sigma$ is typically used in the high-dimensional inference literature, for example see the works of \cite{vandegeer2014,javanmard2018debiasing,javanmard2014confidence}. This assumption simplifies concentration arguments and also ensures that the design matrix satisfies the restricted eigenvalue condition with high probability.

\end{enumerate}

\begin{remark}
\begin{enumerate}
	\item In the special case of a standard linear model, i.e. when $y_i = \langle x_i, \beta \rangle + \varepsilon_i$ with $\varepsilon_i \indep x_i$, the asymptotic variance of $\sqrt{n}(\tilde{\beta}_1 - \beta_1)$ reduces to $\nu^2 = \sigma^2 / \E [r_n^2]$ which is the variance of the OLS estimator of $\beta_1$ in the low-dimensional case and the debiased estimator \citep{vandegeer2014} in the high-dimensional case.

	\item From the subgaussianity of $y_n$ and the Cauchy-Schwarz inequality, it follows that $\E[ r_n^2 z_n^2]$ is uniformly bounded above. Together with $\E [r_n^2] \geq \lambda_{\min}(\Sigma)$ (see Lemma \ref{residual_prop} in the appendix for a proof of this), this shows that $\nu^2$ is uniformly bounded above and thus the rate of convergence of $\tilde{\beta}_1$ is indeed $\sqrt{n}$. On the other hand, if $\E[ r_n^2 z_n^2 ]= o(1)$, then the rate of convergence of $\tilde{\beta}_1$ is faster than $\sqrt{n}$, that is,  we obtain $\sqrt{n}(\tilde{\beta}_1 - \beta_1) \rightarrow_p 0$. 
\end{enumerate}
\end{remark}
\subsection{Inference when $\Sigma$ is Unknown}
In this section we consider the problem of debiasing an estimate $\hat{\beta}_1$ of $\beta_1$ when the precision matrix $\Sigma^{-1}$ is unknown but estimable. Suppose that we have a sample $\mathcal{S} = (x_i, y_i)_{i=1}^{2n}$ of size $2n$, and that we use the second sub-sample $\mathcal{S}_2 = (x_i,y_i)_{i=n+1}^{2n}$ to find an estimate $\hat{\gamma}$ of $\gamma$ using node-wise lasso (as proposed by \citealp{vandegeer2014} in the setting of linear models):
\begin{align}\label{nodewise}
\hat{\gamma} \in \arg\min_{\gamma'} \left\{ \frac{1}{2n} \sum_{i=n+1}^{2n} (X_{i,1} - \langle \gamma', X_{i,-1} \rangle)^2 + \lambda_{\text{node}} \| \gamma'\|_1 \right\}.
\end{align}
This estimate of $\gamma$ is then used to obtain estimates $\hat{r}_i$ of $r_i$ on the first sub-sample $\mathcal{S}_1 = (x_i, y_i)_{i=1}^n$:
\begin{align}\label{resid_unkn_sig}
\hat{r}_i := x_{i, 1} -\langle  x_{i, -1}, \hat{\gamma}\rangle, \quad i \in \{1,\dots, n\}.
\end{align}
The debiased estimator of $\beta_1$ on the first subsample is then defined as
\begin{align}\label{est-def-unknown}
\tilde{\beta}_1 := \hat{\beta}_1 + \frac{\sum_{i=1}^n \hat{r}_i (y_i - \langle x_i, \hat{\beta}\rangle)}{\sum_{i=1}^n{\hat{r}_i x_{i,1}}},
\end{align}
where the pilot estimate $\hat{\beta}$ is is the lasso estimator:
\begin{align*}
\hat{\beta} \in \arg \min_{\beta'} \left\{ \frac{1}{2n} \sum_{i=1}^{n} (y_i - \langle x_i, \beta' \rangle )^2 + \lambda_{\text{pilot}} \|\beta' \|_1 \right\}
\end{align*}
with $\lambda_{\text{pilot}} \gtrsim \sigma_x \sigma_z \sqrt{\log(p) / n}$, and where $\sigma_x, \sigma_z$ are the subgaussian constants of $x_n$ and $z_n$, respectively.
The asymptotic distribution of the above estimator is characterized in the following theorem.
\begin{theorem}\label{thm-unknown-sigma}
	Suppose $(x_i, y_i)_{i=1}^{2n}$ follow the model (\ref{model}) with $x_i \sim N(0,\Sigma)$ and that the following conditions are satisfied:
	\begin{enumerate}
		\item We have $s := \|\beta\|_0 \lor \|\gamma\|_0 = o(\frac{\sqrt{n}}{\log(p)})$.
		
		\item The estimate $\hat{\gamma}$ of $\gamma$, depending on data in the second sub-sample $\mathcal{S}_2$, satisfies
		\begin{align*}
		\p \left(\|\hat{\gamma} - \gamma \|_1 \leq c_\gamma s\sqrt{\frac{ \log p }{n}}\right) \rightarrow 1,
		\end{align*}
		for a constant $c_\gamma$ not dependent on $n$.
		\item There exist $0 < c, C < \infty$ such that $c \leq \lambda_{\min}(\Sigma) \leq \lambda_{\max}(\Sigma) \leq C$. 
		\item  $z_n$ is subgaussian with $\|z_n\|_{\psi_2} \leq \sigma_z$ for some $\sigma_z<\infty$ not depending on $n$.
\end{enumerate}	
	
Then
	\begin{align*}
	\sqrt{n}(\tilde{\beta}_1 - \beta_1) = \nu \cdot \Xi_n, \quad \text{ where }\quad \Xi_n \rightarrow N(0,1).
	\end{align*}
	
\end{theorem}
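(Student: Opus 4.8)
The plan is to reproduce the debiased-Lasso error decomposition, reduce everything to one central-limit term plus remainders, and show that each remainder is $o_p(1)$; the sample split is used precisely to decouple $\hat\gamma$ (hence the estimated residuals $\hat r_i$) from the approximation errors $z_i$ on $\mathcal S_1$. Writing $y_i - \langle x_i,\hat\beta\rangle = z_i + \langle x_i, \beta - \hat\beta\rangle$ in (\ref{est-def-unknown}) and cancelling the $\hat\beta_1 - \beta_1$ contribution against the leading term, I would obtain
\begin{align*}
\sqrt{n}(\tilde\beta_1 - \beta_1) = \frac{\frac{1}{\sqrt{n}}\sum_{i=1}^n \hat r_i z_i}{\frac{1}{n}\sum_{i=1}^n \hat r_i x_{i,1}} + \frac{(\beta_{-1}-\hat\beta_{-1})^T \frac{1}{\sqrt{n}}\sum_{i=1}^n \hat r_i x_{i,-1}}{\frac{1}{n}\sum_{i=1}^n \hat r_i x_{i,1}},
\end{align*}
and analyze the denominator, the stochastic (first) term, and the bias (second) term separately. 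Throughout I would substitute $\hat r_i = r_i - \langle x_{i,-1}, \hat\gamma - \gamma\rangle$ and exploit the exact orthogonalities $\E r_n x_{n,-1} = 0$, $\E r_n z_n = 0$ (the latter from (\ref{orthogonality})), and $\E r_n x_{n,1} = \E r_n^2$.

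For the denominator, the law of large numbers gives $\frac1n\sum r_i x_{i,1} \to \E r_n^2 \ge \lambda_{\min}(\Sigma) > 0$, while the remainder is bounded by $\|\hat\gamma - \gamma\|_1 \cdot \|\frac1n\sum x_{i,-1}x_{i,1}\|_{\max} = O_p(s\sqrt{\log p/n}) = o_p(1)$, so the denominator converges to $\E r_n^2$. For the first numerator, the leading piece $\frac1{\sqrt n}\sum r_i z_i$ is a triangular-array sum of mean-zero terms; a Lyapunov CLT applies because $r_n$ is Gaussian with $\E r_n^2 \le C$ and $z_n$ is subgaussian (Assumption 4), so $r_n z_n$ has uniformly bounded $(2+\delta)$ moments, giving $\frac1{\sqrt n}\sum r_i z_i \to N(0, \E r_n^2 z_n^2)$. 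The correction $(\hat\gamma-\gamma)^T \frac1{\sqrt n}\sum x_{i,-1} z_i$ is where the sample split is indispensable: conditioning on $\mathcal S_2$ renders $\hat\gamma - \gamma$ deterministic and independent of the fresh $(x_i, z_i)_{i\le n}$, so that it is bounded by $\|\hat\gamma-\gamma\|_1 \|\frac1{\sqrt n}\sum x_{i,-1}z_i\|_\infty = O_p(s\sqrt{\log p/n})\cdot O_p(\sqrt{\log p}) = O_p(s\log p/\sqrt n) = o_p(1)$, using $\E x_{n,-1}z_n = 0$ and a maximal inequality for subexponential products. Slutsky then yields that the first term equals $\nu\Xi + o_p(1)$.

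The delicate part, and the main obstacle, is the bias term. Substituting $\hat r_i = r_i - \langle x_{i,-1},\hat\gamma-\gamma\rangle$ into its numerator splits it into $(\beta-\hat\beta)_{-1}^T \frac1{\sqrt n}\sum r_i x_{i,-1}$, which is $O_p(\|\hat\beta-\beta\|_1 \sqrt{\log p}) = O_p(s\log p/\sqrt n) = o_p(1)$ by $\E r_n x_{n,-1}=0$ and a maximal inequality, and the genuinely quadratic cross term $\sqrt n\, (\beta-\hat\beta)_{-1}^T \hat\Sigma_{-1,-1}(\hat\gamma-\gamma)$ with $\hat\Sigma_{-1,-1} = \frac1n\sum x_{i,-1}x_{i,-1}^T$. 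A direct Hölder bound on this term diverges, so the crucial step is the split $\hat\Sigma_{-1,-1} = \Sigma_{-1,-1} + (\hat\Sigma_{-1,-1}-\Sigma_{-1,-1})$: the population part is controlled in $\ell_2$, $\sqrt n\|\hat\beta-\beta\|_2\|\Sigma\|_{\mathrm{op}}\|\hat\gamma-\gamma\|_2 = \sqrt n\cdot O_p(s\log p/n) = O_p(s\log p/\sqrt n) = o_p(1)$, while the fluctuation part is controlled in $\ell_1\times\ell_{\max}\times\ell_1$, $\sqrt n\|\hat\beta-\beta\|_1\|\hat\Sigma-\Sigma\|_{\max}\|\hat\gamma-\gamma\|_1 = O_p\big(s^2(\log p)^{3/2}/n\big) = o_p(1)$. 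Both bounds use the standard simultaneous rates $\|\hat\beta-\beta\|_2, \|\hat\gamma-\gamma\|_2 = O_p(\sqrt{s\log p/n})$ and $\|\hat\beta-\beta\|_1, \|\hat\gamma-\gamma\|_1 = O_p(s\sqrt{\log p/n})$ (valid under Gaussian design with bounded spectrum, where the restricted eigenvalue condition holds with high probability), together with $\|\hat\Sigma-\Sigma\|_{\max}=O_p(\sqrt{\log p/n})$. It is exactly this product of two independent estimation errors having to be $o_p(n^{-1/2})$ that forces the stronger sparsity $s = o(\sqrt n/\log p)$, in contrast to $s = o(n/\log^2 p)$ in Theorem \ref{thm_known_sig}.

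Collecting the three estimates and applying Slutsky yields $\sqrt n(\tilde\beta_1 - \beta_1) = \nu\Xi + o_p(1)$. The only genuinely new difficulty relative to the known-$\Sigma$ case is the quadratic cross term above; the remaining pieces are routine LLN/CLT arguments for triangular arrays and maximal inequalities for products of subgaussian variables, with the sample split handling the decoupling.
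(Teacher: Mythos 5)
Your proposal is correct in all essentials and shares the paper's architecture (decompose into a CLT term plus remainders, Lyapunov CLT for $\tfrac{1}{\sqrt n}\sum_i r_i z_i$, H\"older/Bernstein bounds with union bounds for the corrections, sample splitting to decouple $\hat\gamma$ from $(x_i,z_i)_{i\le n}$), but it diverges from the paper at exactly the step you identify as delicate. The paper writes $\sqrt n(\tilde\beta_1-\beta_1)=A+B+C$ with $B=\sqrt n(\hat R-R)^TX(\beta-\hat\beta)$ and kills $B$ via the Lasso KKT (dual feasibility) bound: $\|\hat\Sigma(\beta-\hat\beta)\|_\infty \le \tfrac1n\|X^Tz\|_\infty+\lambda = \mathcal{O}_p(\sqrt{\log p/n})$, paired with the $\ell_1$ rate $\mathcal{O}_p(s\sqrt{\log p/n})$ for the de-correlation vector difference (via Theorem 2.4 of van de Geer et al.), giving $|B|=\mathcal{O}_p(s\log p/\sqrt n)=o_p(1)$ in a single H\"older step with \emph{no} splitting of $\hat\Sigma$. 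You instead split $\hat\Sigma_{-1,-1}=\Sigma_{-1,-1}+(\hat\Sigma_{-1,-1}-\Sigma_{-1,-1})$, controlling the population part in $\ell_2\times\mathrm{op}\times\ell_2$ and the fluctuation in $\ell_1\times\max\times\ell_1$; your arithmetic checks out ($s\log p/\sqrt n\to 0$ and $s^2(\log p)^{3/2}/n\to 0$ under $s=o(\sqrt n/\log p)$). The trade-off: the paper's KKT route requires $\hat\beta$ to be an actual Lasso solution (it invokes its optimality conditions) but uses only the $\ell_1$ rate for $\hat\gamma$, which is literally what Assumption 2 provides; your route is agnostic to how $\hat\beta$ is produced but needs $\|\hat\gamma-\gamma\|_2=\mathcal{O}_p(\sqrt{s\log p/n})$, which is \emph{not} implied by the assumed $\ell_1$ bound alone -- from $\|\hat\gamma-\gamma\|_1\le c_\gamma s\sqrt{\log p/n}$ you only get $\|\hat\gamma-\gamma\|_2\le s\sqrt{\log p/n}$, and plugging that into your population-part bound yields $\mathcal{O}_p(s^{3/2}\log p/\sqrt n)$, which can diverge under the stated sparsity. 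So as written your proof establishes the theorem under a mildly strengthened Assumption 2 (adding the $\ell_2$ rate, which the node-wise Lasso does satisfy under the Gaussian design with bounded spectrum, as you note); this is worth flagging explicitly, but it is a hypothesis bookkeeping issue rather than a flaw in the argument. One further small note: your denominator remainder bound uses $\|\tfrac1n\sum_i x_{i,-1}x_{i,1}\|_{\max}$, whose entries have nonzero means $\Sigma_{j,1}$, so it is only $\mathcal{O}_p(1)$ rather than vanishing -- but $\mathcal{O}_p(1)$ times $\|\hat\gamma-\gamma\|_1=o_p(1)$ still suffices, so the conclusion stands.
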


\textbf{Discussion of assumptions. } We provide a discussion of the assumptions of the theorem.
\begin{enumerate}
    \item (Sufficient condition for assumption 4.) The approximation error $z_n = y_n - \langle x_n, \beta \rangle$ is subgaussian whenever $y_n$ is subgaussian, since
\begin{align*}
\| z_i \|_{\psi_2} &\leq \| y_n \|_{\psi_2} + |\mu| \cdot  \| \langle x_i, \tau \rangle \|_{\psi_2},
\end{align*}
and $|\mu|$ is bounded by
\begin{align*}
| \mu | &= | \E [y_n \langle x_n, \tau \rangle] | \\
&\leq \sqrt{ \E [y_n^2] } \sqrt{\E[ \langle x_n, \tau \rangle^2 ]}\\
&\lesssim \| y_n \|_{\psi_2}.
\end{align*}
Thus $z_n$ is subgaussian and $\|z_n\|_{\psi_2} \lesssim \|y_n\|_{\psi_2}$. A similar argument shows the converse is also true and $\|y_n\|_{\psi_2} \leq \|z_n\|_{\psi_2}$.

	\item Our approach to approximately de-correlating the design matrix uses sample splitting for estimation of $\gamma$, and the supporting argument is somewhat different from the ones in the work of \cite{zhang2014confidence}, \cite{javanmard2014confidence} and \cite{vandegeer2014}.  
	This is because in our linear model $y_i = \langle x_i, \beta \rangle + z_i$, the error $z_i$ is not independent of, but only uncorrelated with, $x_i$. Since our proof relies on the $\ell_1$ consistency of $\hat{\gamma}$, it necessitates sparsity of the rows of $\Sigma^{-1}$.  It is not clear if the argument could be changed to justify the use of standard debiasing techniques (introduced in the aforementioned papers) that do not require sparsity assumptions on $\Sigma^{-1}$.


	\item It is straightforward to see that given assumptions 1 and 3 of Theorem \ref{thm-unknown-sigma}, assumption 2 is satisfied with high probability under a Gaussian design. 
First note that the eigenvalues of $\Sigma_{-1,-1}:= \E[x_{n,-1}x_{n,-1}^T]$ are within the interval $[\lambda_{\min}(\Sigma), \lambda_{\max}(\Sigma)]$ and hence by assumption 3 are bounded away from $0$ and $+\infty$. Using Lemma \ref{RE_condition}, this implies that the design matrix $X$ satisfies the restricted eigenvalue (RE) condition\footnote{See Definition \ref{def:RE} in Subsection \ref{pilot-estimate} for the definition of restricted eigenvalues.}
 (with a restricted eigenvalue that is bounded away from zero) with probability $1- o(1)$ as $n \rightarrow \infty$.
Next, by the definition of $\gamma$ and $r_i$ we can write
	\begin{align*}
	x_{i,1} = \langle \gamma, x_{i,-1} \rangle + r_i, \quad i = n+1, \dots, 2n,
	\end{align*}
where by construction $r_i$ and $x_{i,-1}$ are uncorrelated (and hence independent). Thus standard results for the lasso estimator \citep[e.g. Theorem 7.2 of][]{bickel2009simultaneous} guarantee with high probability that $\| \hat{\gamma} - \gamma\|_1 \lesssim s\sqrt{\log(p) / n}$ as long as the tuning parameter satisfies $\lambda_{\text{node}} \gtrsim \lambda_{\max}(\Sigma) \sqrt{\log(p) / n}$.
Obviously, all considerations here go through if instead of estimating $\beta_1$, we are interested in estimating $\beta_k$ for some other fixed $k$: we replace 1 by $k$ at the pertinent places. 
	
	%
\end{enumerate}

\textbf{Examples.} In the following we provide examples that satisfy the assumptions of the above theorem.
\begin{itemize}
\item (Noisy one-bit compressed sensing). Suppose that $y = \operatorname{sign}(\langle x, \tau \rangle) \cdot \epsilon$ where $\tau$ is a s-sparse vector with $\|\tau\|_2 = 1$, $\epsilon \in \{ 1, -1\}$ is a random sign with $\p(\epsilon = 1) = p \in [0,1]$ independent of $x \sim N(0, I)$. Variants of this model have been widely studied in the compressed sensing literature. See the survey by \cite{li2018survey} for applications to wireless sensor networks, radar, and bio-signal processing. It is clear that assumptions 1 and 3 of Theorem \ref{thm-unknown-sigma} hold for this model. Note that in this model the rows of $\Sigma^{-1} = I$ are 1-sparse, and therefore the third remark above applies for the consistency of the node-wise lasso estimate $\hat{\gamma}$, implying the second assumption holds as well. To check the fourth assumption, use the boundedness of $y$ to write
\begin{align*}
\|z \|_{\psi_2} \leq \|y\|_{\psi_2} + |\mu| \cdot \| \langle x, \tau \rangle \|_{\psi_2} \lesssim 1 + | \mu| \leq 1+  \sqrt{\frac{2}{\pi}}.
\end{align*}
The fifth assumption is also satisfied in light of Proposition \ref{prop1}.

\item Suppose that $y = g( \langle x, \tau \rangle) + \varepsilon$ where $g$ is a fixed but unknown bounded or Lipschitz function, $x \sim N(0,\Sigma)$ with $\Sigma_{ij}=\rho^{|i-j|}$ for some fixed $\rho \in (0,1)$, and $\varepsilon$ is a mean-zero $\sigma$-subgaussian error independent of $x$. Then as long as $\|\tau\|_0 = o(\sqrt{n}/\log(p) )$, the assumptions of the theorem are satisfied. Note that in this case, $\Sigma$ is the correlation matrix of an $\mathrm{AR}(1)$ process, which is well-known to have a $2$-banded inverse covariance matrix.
Therefore in this case $\| \gamma \|_0 \leq 2$ and assumption 1 is satisfied. It can also be shown that $(1-\rho)/(1+\rho) \leq \lambda_{\min}(\Sigma) \leq \lambda_{\max}(\Sigma) \leq (1 + \rho) / (1 - \rho)$ (for example using Lemma 6 in the work of \cite{gray2006toeplitz}), showing that assumption 3 is satisfied. The sub-gaussianity of $z = y - \langle \beta, x \rangle$ easily follows from the Lipschitz or boundedness assumption on the link function. For example, if $g$ is L-Lipschitz, then
\begin{align*}
\| z \|_{\psi_2} &\leq \| g( \langle x, \tau \rangle) \|_{\psi_2} + \| \mu \langle x, \tau \rangle \|_{\psi_2} + \sigma \\
&\leq  \| L \cdot \langle x, \tau \rangle \|_{\psi_2} + \| \mu \langle x, \tau \rangle \|_{\psi_2} + \sigma \\
&\lesssim L + | \mu | + \sigma.
\end{align*}
Furthermore, $\mu$ is bounded by $L$ as well:
\begin{align*}
|\mu| &= \left| \E[g(\langle x, \tau \rangle) \langle x, \tau \rangle ] \right| \\
&\leq \E[ \left|g(\langle x, \tau \rangle)\right|  \left| \langle x, \tau \rangle \right| ] \\
&\leq L \E[ \langle x, \tau \rangle^2 ] = L.
\end{align*}
Therefore as long as $\sigma$ and $L$ are uniformly bounded (in $n$), the subgaussian norm of $z$ is also uniformly bounded and assumption 4 is satisfied. The consistency of the lasso pilot estimator $\hat{\beta}$ and the node-wise estimate $\hat{\gamma}$ now follow from Proposition \ref{prop1} and the third remark above.

\end{itemize}

\begin{remark}\label{sample-splitting}
	In order to avoid loss of efficiency due to sample splitting, one can change the roles of the two sub-samples in the theorem to compute two estimates $\tilde{\beta}_1^1, \tilde{\beta}_1^2$, and use the average of $\tilde{\beta}^1_1$ and $\tilde{\beta}^2_1$ as the final estimator. The proof of Theorem \ref{thm-unknown-sigma} shows that 
	\begin{align*}
	\frac{\sqrt{n}(\tilde{\beta}_1^m - \beta_1)}{\nu} = G^m + o_p(1), \quad m = 1,2,
	\end{align*}
	where 
	\begin{align*}
	G^m = \frac{1}{\sqrt{n}} \frac{\sum_{i\in \mathcal{S}_m} r_i z_i}{\nu \E [r_n^2]}
	\end{align*}
	depends only on the $m$-th subsample $\mathcal{S}_m$ so that $G^1$ and $G^2$ are independent. Moreover, we have $G^m \rightarrow_d N(0,1)$ as $n \rightarrow \infty$, and so by independence
	\begin{align*}
	G^1 + G^2 \rightarrow_d N(0,2).
	\end{align*}
	Consequently, for the average estimator $\tilde{\beta}_1^{\text{avg}} = (\tilde{\beta}_1^1 + \tilde{\beta}_1^2)/2$ we have
	\begin{align*}
	 \frac{\sqrt{2n} (\tilde{\beta}_1^{\text{avg}} - \beta_1)}{\nu} &= \frac{1}{\sqrt{2}} \left( \sqrt{n}\frac{(\tilde{\beta}_1^1 - \beta_1)}{\nu} + \sqrt{n}\frac{(\tilde{\beta}_1^2 - \beta_1)}{\nu} \right)\\
	&= \frac{1}{\sqrt{2}} (G^1 + G^2 + o_p(1)) \\
	&\rightarrow_d N(0,1),
	\end{align*}
	showing that the lost efficiency due to sample splitting is regained by switching the roles of sub-samples. This technique is well-known, see for example the work of \cite{chernozhukov2018double} for a similar application.
	
%

\end{remark}

\subsection{A Pilot Estimator of $\beta$} \label{pilot-estimate}


In this subsection the construction of a pilot estimator for $\beta$ is discussed. Consistency and rates of convergence of (generalized) constrained lasso estimators for single-index models under Gaussian or elliptically symmetric design have been established in other works, for example by \cite{plan2016generalized} and \cite{goldstein2018structured}. In what follows we state and prove the consistency of penalized lasso under assumptions similar to the ones typically used in high dimensional linear models.

Let $\hat{\beta}$ be a solution to the penalized form of the lasso problem:
\begin{align}\label{pilot-def}
\hat{\beta} = \arg \min_{\beta'} \left\{ \frac{1}{2n} \| Y - X \beta' \|_2^2 + \lambda \| \beta' \|_1\right \}.
\end{align}

In what follows we give sufficient conditions for consistency of $\hat{\beta}$, following the arguments in the work of \cite{bickel2009simultaneous}. Before we state the proposition, we review the concept of restricted eigenvalues. 
\begin{definition}[Restricted eigenvalue condition]\label{def:RE}
A matrix $A$ is said to satisfy the restricted eigenvalue condition with parameters $(s,\kappa, \alpha)$, if for all $S \subset \{1,\dots, p\}$ with $|S| \leq s$ and all $\theta \in \mathbf{R}^p$ with $\|\theta_{S^c}\|_1 \leq \kappa \|\theta_S\|_1$ we have 
$\| A \theta\|_2 \geq \alpha \|\theta_S\|_2$. In this case we call $\alpha$ a restricted eigenvalue of $A$ (corresponding to parameters $(s,\kappa)$).

\end{definition}

\begin{prop}\label{prop1}
	Suppose that model (\ref{model}) holds with $x \sim N(0,\Sigma)$ and let $\sigma_x^2 = 4\max_j \Sigma_{jj}$. Assume that
	\begin{enumerate}
		\item $z_i = y_i - \langle x_i, \beta \rangle$ is subgaussian with $\|z_i\|_{\psi_2} \leq \sigma_z$,
		\item $\beta$ is a $s$-sparse vector, i.e. $|\{j: \beta_j \neq 0\} |\leq s$,
		\item $\Sigma^{\frac{1}{2}}$ satisfies the restricted eigenvalue condition with parameters $(s,9, 2\alpha)$ for some $\alpha > 0$, and that $\alpha$ and $\lambda_{\max}(\Sigma)$ are bounded away from $0,\infty$.
	\end{enumerate}
	Then there exists an absolute constant $c_0 > 0$ such that for $\lambda > c_0 \sigma_z \sigma_x \sqrt{\log(p)/n}$ and $n \geq c_0 (1 \lor \sigma_x^4) s \log(p/s)$ we have
	\begin{align*}
	\|\hat{\beta} - \beta \|_1 &\leq \frac{12 s \lambda}{\alpha^2},  \\
	\| \hat{\beta} - \beta \|_2 &\leq \frac{3 \sqrt{s}\lambda}{\alpha^2},\\
	 \frac{\|X(\hat{\beta} - \beta )\|_2}{\sqrt{n}} &\leq \frac{3 \sqrt{s} \lambda}{\alpha},
	\end{align*}
with probability no less than $1 - 2p^{-1}- \exp(-c_0 n^2/\sigma_x^4)$.
\end{prop}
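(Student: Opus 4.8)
The plan is to follow the by-now standard analysis of the Lasso as in \citep{bickel2009simultaneous}, with the one genuinely new ingredient being the concentration of the noise term, which must account for the fact that $z$ is only uncorrelated with—not independent of—$X$. Write $h := \hat{\beta} - \beta$ and $S := \{j : \beta_j \neq 0\}$, so $|S| \le s$. Starting from the defining optimality inequality $\frac{1}{2n}\|Y - X\hat{\beta}\|_2^2 + \lambda\|\hat{\beta}\|_1 \le \frac{1}{2n}\|Y - X\beta\|_2^2 + \lambda\|\beta\|_1$ and substituting $Y = X\beta + z$, the quadratic terms in $z$ cancel and one obtains the basic inequality $\frac{1}{2n}\|Xh\|_2^2 \le \frac{1}{n}z^T X h + \lambda(\|\beta\|_1 - \|\hat{\beta}\|_1)$. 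Bounding $\frac{1}{n}|z^T X h| \le \frac{1}{n}\|X^T z\|_\infty\,\|h\|_1$ reduces everything to controlling the single scalar $\frac{1}{n}\|X^T z\|_\infty$ and then invoking the restricted eigenvalue condition.

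The crux is the event $\mathcal{A} := \{\frac{1}{n}\|X^T z\|_\infty \le \lambda/2\}$. Its $j$-th coordinate is $\frac{1}{n}\sum_{i=1}^n x_{ij} z_i$, whose summands are mean-zero precisely by the orthogonality property $\E x_{ij} z_i = 0$ established in (\ref{orthogonality})—this is where the elliptical/Gaussian structure enters, and it is the substitute for the independence of noise and design exploited in the classical linear-model proofs. Since $x_{ij}$ and $z_i$ are each subgaussian (with constants $\sigma_x,\sigma_z$), their product is subexponential, and a Bernstein inequality for independent centered subexponential variables controls the deviation of each coordinate sum at scale $\sigma_x\sigma_z\sqrt{\log p/n}$. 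A union bound over the $p$ coordinates, together with the prescribed choice $\lambda > c_0\sigma_z\sigma_x\sqrt{\log(p)/n}$, then yields $\p(\mathcal{A}) \ge 1 - 2p^{-1}$, provided $c_0$ is large enough that the subexponential (linear) tail term is dominated, which is guaranteed by $n \gtrsim \log p$.

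On $\mathcal{A}$ the basic inequality simplifies, via $\|\beta\|_1 - \|\hat{\beta}\|_1 \le \|h_S\|_1 - \|h_{S^c}\|_1$, to $\frac{1}{2n}\|Xh\|_2^2 + \frac{\lambda}{2}\|h_{S^c}\|_1 \le \frac{3\lambda}{2}\|h_S\|_1$, which both places $h$ in the cone $\|h_{S^c}\|_1 \le 3\|h_S\|_1$ (comfortably inside the region $\|h_{S^c}\|_1 \le 9\|h_S\|_1$ covered by Assumption 3) and gives $\frac{1}{n}\|Xh\|_2^2 \le 3\lambda\|h_S\|_1 \le 3\lambda\sqrt{s}\,\|h_S\|_2$. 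It remains to transfer the restricted eigenvalue condition from $\Sigma^{1/2}$ to the empirical design: one shows that with probability at least $1 - \exp(-c_0 n^2/\sigma_x^4)$ the sample Gram matrix satisfies $\frac{1}{n}\|Xh\|_2^2 \ge \alpha^2\|h_S\|_2^2$ on the cone (losing at most the factor $2$ relative to the population constant $2\alpha$), which is exactly where the sample-size requirement $n \ge c_0(1\lor\sigma_x^4)s\log(p/s)$ is consumed. Combining the two displays gives $\|h_S\|_2 \le 3\sqrt{s}\,\lambda/\alpha^2$; feeding this back yields the prediction bound $\frac{1}{\sqrt{n}}\|Xh\|_2 \le 3\sqrt{s}\,\lambda/\alpha$ and the $\ell_1$ bound $\|h\|_1 \le 4\|h_S\|_1 \le 4\sqrt{s}\,\|h_S\|_2 \le 12 s\lambda/\alpha^2$, with the full $\ell_2$ statement following from the same estimate together with the standard block-decomposition of $h_{S^c}$ from \citep{bickel2009simultaneous} that controls $\|h\|_2$ by a constant multiple of $\|h_S\|_2$.

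The main obstacle lies in the two probabilistic steps rather than in the algebra. The delicate point is the control of the noise term: because $z_i$ depends on $x_{ij}$, one cannot condition on $X$ and treat $z$ as fixed Gaussian noise; instead one must verify the mean-zero property coordinatewise through (\ref{orthogonality}) and bound products of dependent subgaussians via a subexponential Bernstein inequality, which is what forces the $\sqrt{\log p/n}$ (as opposed to a faster) rate for $\lambda$ and accounts for the $2p^{-1}$ term. The restricted-eigenvalue transfer from $\Sigma^{1/2}$ to $\frac{1}{\sqrt{n}}X$ for subgaussian rows is the second technical ingredient, contributing the $\exp(-c_0 n^2/\sigma_x^4)$ term and requiring the stated lower bound on $n$; both are otherwise routine given the subgaussianity assumptions.
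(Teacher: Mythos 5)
Your proposal is correct and follows essentially the same route as the paper's proof: the basic optimality inequality plus H\"older, a subexponential Bernstein bound with union bound for $\frac{1}{n}\|X^T z\|_\infty$ (with the mean-zero property supplied by the orthogonality relation (\ref{orthogonality}) exactly as the paper uses it), the cone condition $\|h_{S^c}\|_1 \le 3\|h_S\|_1$, and the transfer of the restricted eigenvalue condition from $\Sigma^{1/2}$ to $X/\sqrt{n}$ for subgaussian rows, which is the content of the paper's Lemma \ref{RE_condition} (via Theorem 6 of \citealp{rudelson2013reconstruction}). If anything, you are slightly more careful than the paper at the final step, since you note that the RE condition as stated only lower-bounds $\|h_S\|_2$ and explicitly invoke the block-decomposition argument of \citet{bickel2009simultaneous} to pass to $\|h\|_2$, whereas the paper's displayed chain applies the RE bound directly to the full norm.
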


\begin{remark}
 Assumptions (2) and (3) in Proposition \ref{prop1} are standard  for the consistency of the lasso estimator, see the work of \cite{bickel2009simultaneous}. Assumption (1) is the analogue of subgaussian errors in linear models and is satisfied whenever $y_n$ is subgaussian. This is a strong assumption on the approximation error $z_i$, but is nevertheless satisfied in several interesting cases such as when $y_i$ is bounded almost surely by a constant or when the model can be written as $y_i = g(\langle x_i , \tau \rangle) + e_i$, where $g$ is an unknown Lipschitz function and $e_i$ is a mean-zero subgaussian error independent of $x_i$.
\end{remark}

\section{Towards More Efficient Inference}
\label{Efficient-Inference} 

In previous sections, a linear approximation of the link function was used to obtain estimates of $\beta_1$. While this approach avoids the estimation of the link function, the (scaled) variance of the resulting estimator, $\nu^2 = \E [r_n^2 (y_n - \langle x_n, \beta \rangle)^2] / (\E[ r_n^2])^2$, depends heavily on the quality of this linear approximation. 

Let us write $\E[y_n \mid x_n = x] = g(\langle x, \tau \rangle)$ and $e_n = y_n - g(\langle x_n, \tau \rangle)$, so that $\E[e_n\mid x_n]= 0$. 
In this section we show how, in the known $\Sigma$ regime and under smoothness assumptions on the link function $g$, we can go beyond a linear approximation and obtain more efficient estimators of $\beta_1$. To this end, we use an expansion of the link function in terms of Hermite polynomials, as the latter form an orthonormal basis of the Hilbert space $L^2(\mathbf{R}, N(0,1))$ and are thus particularly useful in our setting.

The use of Hermite polynomials is readily motivated once we write $\langle x_n, \beta \rangle = \mu \langle x_n, \tau \rangle = \mu \cdot h_1(\langle x_n, \tau \rangle)$, where $h_1(\xi) = \xi$ is the first-order Hermite polynomial, and $\mu$ is by definition
\begin{align*}
\mu = \E [y_n \langle x_n, \tau \rangle] = \E [g(\xi) h_1(\xi)], \quad \xi \sim N(0,1).
\end{align*}
Thus $\mu$ is the inner product of $g$ and $h_1$ in $L^2(\mathbf{R}, N(0,1))$, and the debiasing procedure of Section \ref{Gaussian-design} uses only the projection of the link function on $h_1$ to linearize the model. 

Assume that $g$ can be expanded as $g(\xi) = \sum_{j=0}^\infty \mu_j h_j(\xi)$, where $h_j$ is the normalized Hermite polynomial of $j$-th degree:
\begin{align}\label{hermite_def}
h_j (\xi) = \frac{(-1)^{j}}{\sqrt{j!}} e^{\frac{\xi^2}{2}} \frac{d^j}{d\xi^j} e^{-\frac{\xi^2}{2}}.
\end{align}
The rest of this section considers an estimator of the form
\begin{align*}
	\tilde{\beta}_1 = \hat\beta_1 + \frac{\sum_{i=1}^n r_i(y_i - \hat{g}_m(\langle x_i, \hat{\tau}\rangle))}{\sum_{i=1}^n r_i x_{i1}},
\end{align*}
where $\hat{g}_m = \sum_{j=0}^{m} \hat{\mu}_j h_j$ is a higher order estimate of $g$.

\begin{figure}[t]
\subfloat[Distribution of $\tilde{\beta}_1 - \beta_1$]{\includegraphics[width = 3in, height = 2.1in]{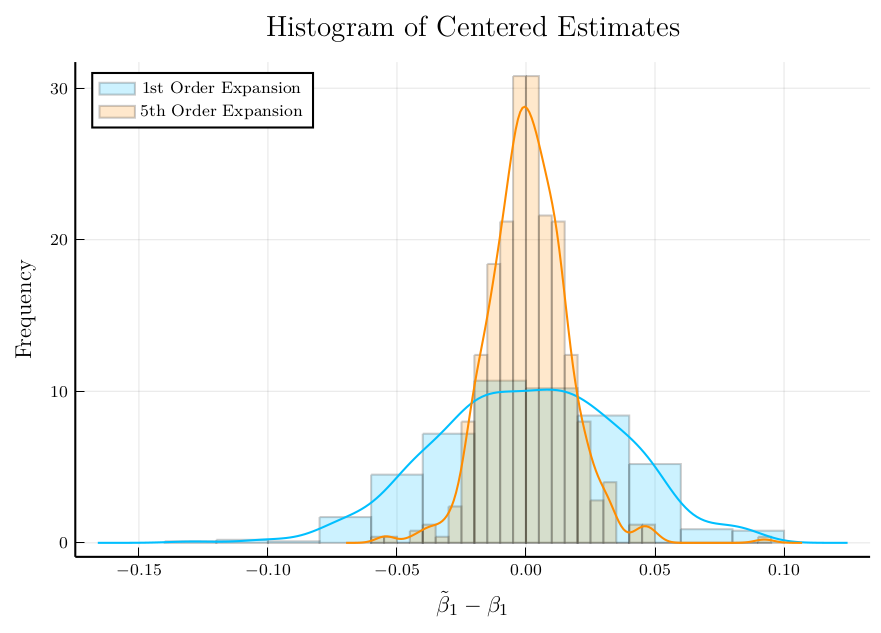}} 
\subfloat[A cubic link function and its estimate]{\includegraphics[width = 3in, height = 2.1in]{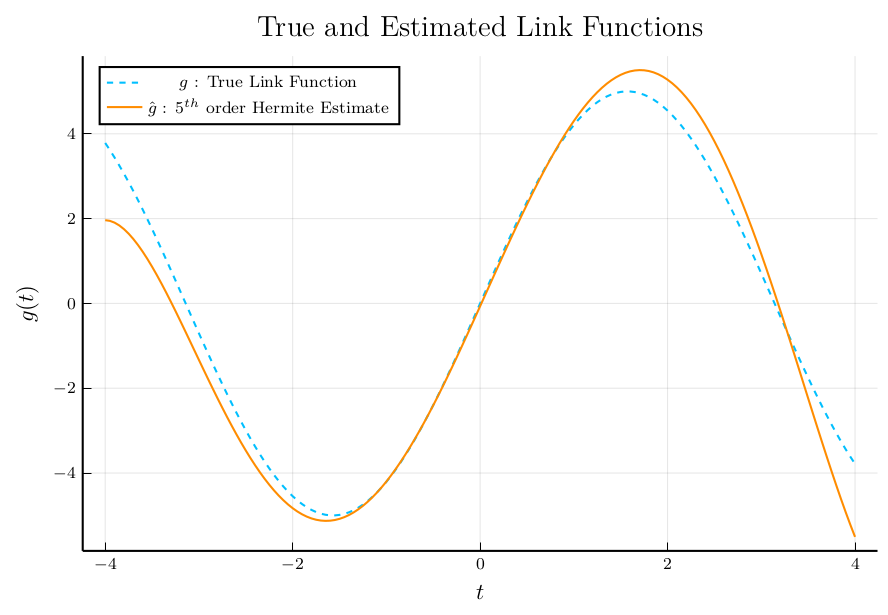}}
\caption{Debiasing with Hermite polynomial expansions with link function $g(t)=5\sin(t)$. (a). The distribution of debiased estimator centered at $\beta_1$ is approximately Gaussian and centered at zero. The use of a 5th order expansion leads to significant improvement in variance. (b). The 5th order Hermite estimate of the link function (n = 2000, p = 3000).}
\label{hist_link}
\end{figure}

In order to simplify notation, assume that we have a sample $(x_i, y_i)_{i=1}^{2n}$ of size $2n$. For a given $m$, we compute $\{\hat{\mu}_j\}_{j=1}^m$ and $\hat{\tau}$ on $\mathcal{S}_2 = (x_i, y_i)_{i=n+1}^{2n}$ as in Algorithm \ref{alg:efficient-known}:

\begin{algorithm}[t]
\SetAlgoLined
\KwData{An iid sample $(x_i, y_i)_{i=1}^{2n}$} \KwIn{Tuning parameters $m,\lambda_\text{pilot}, $}
\begin{enumerate}
	\item Set $\hat{\beta} := \arg\min_{\beta'} \left\{ \lfloor n/2 \rfloor^{-1}    \sum_{i=n+1}^{n + \lfloor n/2 \rfloor} (y_i - \langle x_i, \beta' \rangle)^2 +\lambda_\text{pilot} \| \beta' \|_1 \right\}$.	\item Define $\hat{\mu}_1 := \|\Sigma^{\frac{1}{2}} \hat{\beta}\|_2$ and $\hat{\tau} := \hat{\mu}_1^{-1} \hat{\beta}$.
	\item For $0 \leq j \leq m$ and $ j \neq 1$ set 
	$\hat{\mu}_j := \frac{1}{\lceil n/2 \rceil} \sum_{i= n + \lfloor n/2 \rfloor +1 }^{2n} y_i h_j(\langle x_i, \hat{\tau} \rangle)$.
	\item \KwRet
	\begin{align}\label{efficient-def}
	\tilde{\beta}_1 := \hat\beta_1 + \frac{\sum_{i=1}^n r_i(y_i - \sum_{j=0}^m \hat{\mu}_j h_j(\langle x_i, \hat{\tau} \rangle))}{\sum_{i=1}^n r_i x_{i1}}\,.
	\end{align}
	
\end{enumerate}
 \caption{Hermite Estimator with Known $\Sigma$}\label{alg:efficient-known}
\end{algorithm}

%

\begin{theorem}\label{thm-efficient}
	Suppose that $(x_i, y_i)_{i=1}^{2n}$ are i.i.d. observations from the model
	\begin{align*}
	y_i = g(\langle \tau, x_i \rangle) + e_i, \quad x_i \sim N_p(0,\Sigma), \quad \E[e_i | x_i] = 0.
	\end{align*}
	Let $m = \lfloor \log^\frac{2}{3}(n) \rfloor$ and suppose that $\hat{\beta}, \hat{\tau}, \{\hat{\mu}_j\}_{j=0}^m$ are computed as in Algorithm \ref{alg:efficient-known}. Assume also that the following conditions are satisfied:
	\begin{enumerate}
		\item There exist $0 < c, C < \infty$ such that $c \leq \lambda_{\min}(\Sigma) \leq \lambda_{\max}(\Sigma) \leq C$. 
		\item There exists a constant $c' > 0$ such that $|\mu_1| = |\E [y_n \langle x_n, \tau \rangle]| > c'$ for all $n \geq 1$.
		\item The response $y_n$ has a finite fourth moment: $\E[ y_n^4] < C_y^4$.
		\item The link function $g$ is differentiable with
		\begin{align*}
		\|g'\|_{L_2}^2 = \underset{\xi \sim N(0,1)}{\E}| g'(\xi)|^2 < L^2 < \infty,
		\end{align*}
		for a constant $L$ not depending on $n$.
		\item The sparsity of $\beta$ satisfies $s = o(\frac{n}{\log^2(p)})$.
		\item $\E |e_n|^{2 + \alpha} < M < \infty$ for some $\alpha, M>0$ and all $n \geq 1$.
		\item The pilot estimator $\hat{\beta}$ satisfies
		\begin{align*}
		    \|\hat{\beta} - \beta \|_2 \lesssim \sqrt{\frac{s \log(p)}{n}} \quad \text{ with probability } 1-o(1).
		\end{align*}
	\end{enumerate}
	Then
	\begin{align*}
	\sqrt{n}(\tilde{\beta}_1 - \beta_1) = \nu \cdot \Xi_n, \quad \text{ where } \Xi_n \rightarrow_d N(0,1), \text{ and } \nu^2 = \frac{\E[ r_n^2 e_n^2]}{(\E[ r_n^2])^2}.
	\end{align*}
\end{theorem}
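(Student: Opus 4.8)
The plan is to reduce the assertion to a single ratio whose numerator satisfies a central limit theorem and whose denominator concentrates, after peeling off a collection of correction terms produced by the sieve and showing each is $o_p(1)$. Throughout write $\xi_i=\langle x_i,\tau\rangle$ and $\hat\xi_i=\langle x_i,\hat\tau\rangle$, and exploit two algebraic identities. First, because $\hat\mu_1\hat\tau=\hat\beta$ and $h_1(t)=t$, the degree-one term of the estimated link is exactly $\hat\mu_1 h_1(\hat\xi_i)=\langle x_i,\hat\beta\rangle$. Second, since $\mu_1 h_1(\xi_i)=\langle x_i,\beta\rangle$, the Hermite expansion of the approximation error is
\begin{align*}
z_i:=y_i-\langle x_i,\beta\rangle=\mu_0+\sum_{j\ge 2}\mu_j h_j(\xi_i)+e_i .
\end{align*}
Substituting $y_i=\langle x_i,\beta\rangle+z_i$ together with these identities into (\ref{efficient-def}) and adding $\hat\beta_1-\beta_1$, I would obtain a decomposition of the form
\begin{align*}
\sqrt{n}(\tilde\beta_1-\beta_1)=\frac{1}{\hat D_n}\cdot\frac{1}{\sqrt n}\sum_{i=1}^n r_i e_i+P_n+Q_n+T_n+\frac{1}{\hat D_n}\cdot\frac{1}{\sqrt n}\sum_{j=2}^m D_{n,j},
\end{align*}
where $\hat D_n=\tfrac1n\sum_i r_i x_{i1}$, the term $P_n=\hat D_n^{-1}\,n^{-1/2}\sum_i r_i\langle x_{i,-1},\beta_{-1}-\hat\beta_{-1}\rangle$ is the pilot cross term (the degree-one $\hat\tau$-error being absorbed here by the first identity), $Q_n$ is the $j=0$ term proportional to $\mu_0-\hat\mu_0$, $T_n$ is the truncation tail built from $\sum_{j>m}\mu_j h_j(\xi_i)$, and $D_{n,j}=\mu_j\sum_i r_i h_j(\xi_i)-\hat\mu_j\sum_i r_i h_j(\hat\xi_i)$.

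The main term and the denominator are routine. Since $r_n$ is a linear form in $x_n$ and $\E[e_n\mid x_n]=0$, the summands $r_i e_i$ are i.i.d.\ with mean zero and variance $\E r_n^2 e_n^2$; the Lyapunov condition follows from $\E|e_n|^{2+\alpha}<\infty$ and the finiteness of all moments of the Gaussian $r_n$, so $n^{-1/2}\sum_i r_i e_i\to_d N(0,\E r_n^2 e_n^2)$. Meanwhile $\hat D_n\to_p \E[r_nx_{n1}]=\E r_n^2>0$, and Slutsky's theorem yields the limiting variance $\nu^2=\E r_n^2e_n^2/(\E r_n^2)^2$. For $P_n$ and $Q_n$ I would use the sample-splitting structure: the pilot $\hat\beta$ (hence $\hat\mu_0$) is built on $\mathcal S_{21}$ and is independent of $(r_i,x_i)_{i\in\mathcal S_1}$, so conditionally the relevant averages are centered (using $\E[r_n x_{n,-1}]=0$ and $\E r_n=0$); $\ell_2$ concentration then gives $P_n=O_p(\|\hat\beta-\beta\|_2)$ and $Q_n=O_p(n^{-1/2})$, both $o_p(1)$ under the sparsity hypothesis, exactly as in Theorem \ref{thm_known_sig}.

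For the truncation tail $T_n$ the key observation is the Hermite orthogonality identity $\E[r_n h_j(\xi_n)]=\E[r_n\xi_n]\,\mathbf{1}\{j=1\}$, which holds because $\E[\xi h_j(\xi)]=\mathbf{1}\{j=1\}$ and $r_n,\xi_n$ are jointly Gaussian. Consequently $\sum_{j>m}\mu_j h_j(\xi_n)=g(\xi_n)-g_m(\xi_n)$ is orthogonal to $r_n$, so $n^{-1/2}\sum_i r_i(g-g_m)(\xi_i)$ is centered with variance $\V\bigl(r_n(g-g_m)(\xi_n)\bigr)$. Writing $r_n=\rho\,\xi_n+w_n$ with $w_n$ independent of $\xi_n$ and using the recurrence $\xi h_j=\sqrt{j+1}\,h_{j+1}+\sqrt{j}\,h_{j-1}$, this variance is bounded by a tail of $\sum_{j\ge m}j\mu_j^2$, which tends to $0$ because $\sum_j j\mu_j^2=\|g'\|_{L_2}^2<L^2$. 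Hence $T_n=o_p(1)$; this is precisely where the smoothness hypothesis on $g$ enters.

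The higher-order terms $D_{n,j}$ are the crux. I would condition on $\mathcal S_2$ and treat $\hat\tau,\hat\mu_j$ as fixed; the independence supplied by sample splitting makes all sums over $\mathcal S_1$ genuine i.i.d.\ averages. By the Hermite orthogonality under bivariate normality, $(\xi_n,\hat\xi_n)$ are jointly standard normal with correlation $\rho_n=\tau^T\Sigma\hat\tau$ satisfying $1-\rho_n=\tfrac12\|\Sigma^{1/2}(\hat\tau-\tau)\|_2^2=O_p(\|\hat\tau-\tau\|_2^2)$, and $\E[h_j(\xi_n)h_k(\hat\xi_n)]=\rho_n^j\delta_{jk}$; this gives both $\E[\hat\mu_j\mid\mathcal S_{21}]=\mu_j\rho_n^j$ (a bias of order $|\mu_j|\,j(1-\rho_n)$) and, crucially, that the conditional means of $\sum_i r_i h_j(\xi_i)$ and $\sum_i r_i h_j(\hat\xi_i)$ both vanish for $j\ge2$, so only the conditional variances of $n^{-1/2}\sum_{j=2}^m D_{n,j}$ need be controlled. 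I would split each $D_{n,j}$ into a $\hat\tau$-error part $\mu_j\sum_i r_i\{h_j(\xi_i)-h_j(\hat\xi_i)\}$ and a $\hat\mu_j$-error part $(\mu_j-\hat\mu_j)\sum_i r_i h_j(\hat\xi_i)$, bounding the former via the relation $\E[\{h_j(\xi)-h_j(\hat\xi)\}\{h_k(\xi)-h_k(\hat\xi)\}]=2\delta_{jk}(1-\rho_n^j)$ together with $\sum_j j\mu_j^2<L^2$, and the stochastic fluctuation of $\hat\mu_j$ via $\E[y_n^2h_j(\hat\xi_n)^2]\le\sqrt{\E y_n^4}\sqrt{\E h_j(\hat\xi_n)^4}$ and Gaussian hypercontractivity $\E h_j^4\le 9^{\,j}$, so that $\hat\mu_j-\E[\hat\mu_j\mid\mathcal S_{21}]=O_p(3^{j/2}/\sqrt n)$. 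The hard part, and the reason the theorem fixes $m=\lfloor\log^{2/3}n\rfloor$, is the bookkeeping needed to sum these bounds over $2\le j\le m$: the hypercontractive factors appear only in the purely stochastic fluctuations of $\hat\mu_j$ and accumulate to at most $C^{m}=\exp\{c(\log n)^{2/3}\}=n^{o(1)}$, which is absorbed by the accompanying factor $1/\sqrt n$, whereas the terms involving $\hat\tau-\tau$ are controlled through the orthogonality relations above and carry only polynomial-in-$m$ factors, hence are dominated by $\|\hat\tau-\tau\|_2\lesssim\|\hat\beta-\beta\|_2=O_p(\sqrt{s\log p/n})$ (using $\mu_1>c'$ to make $\hat\tau=\hat\beta/\|\Sigma^{1/2}\hat\beta\|_2$ well defined). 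Assembling these estimates shows $n^{-1/2}\sum_{j=2}^m D_{n,j}=o_p(1)$, and combining with the previous paragraphs completes the proof.
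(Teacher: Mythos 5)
Your proposal follows essentially the same route as the paper's proof: the identical decomposition into the main term $n^{-1/2}\sum_i r_i e_i$ (Lyapunov CLT), the $j=0$ and linear ($\hat\mu_1\hat\tau=\hat\beta$) terms, the $2\le j\le m$ terms split into a $\hat\tau$-error and a $\hat\mu_j$-error part, and the truncation tail, all controlled by conditioning on $\mathcal{S}_2$, the correlated-Gaussian orthogonality $\E[h_j(\xi)h_k(\hat\xi)]=\rho^j\delta_{jk}$, the smoothness-to-decay bound $\sum_j j\mu_j^2\le L^2$, and the same bookkeeping $m^{3/2}\|\hat\tau-\tau\|_2^2\lesssim s\log^2(p)/n\to 0$ and $m3^{m+1}/n\to 0$. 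The only differences are cosmetic substitutions of equivalent lemmas: you invoke Gaussian hypercontractivity $\E h_j^4\le 9^j$ where the paper cites the Larsson-Cohn $L^4$-norm asymptotics, and the three-term recurrence $\xi h_j=\sqrt{j+1}\,h_{j+1}+\sqrt{j}\,h_{j-1}$ where the paper applies Stein's lemma twice to handle the correlation of $r_i$ with $\xi_i,\hat\xi_i$.
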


\begin{remark}
	1. Inspecting the proof of Theorem \ref{thm-efficient} shows that the argument in remark (\ref{sample-splitting}) applies here as well, so that changing the role of the two subsamples $\mathcal{S}_1$ and $\mathcal{S}_2$ leads to efficient use of the full sample.
	\\
	2. Our second assumption requiring that $|\mu_1|$ be bounded away from zero ensures that $\hat{\tau} := \hat{\mu}_1^{-1} \hat{\beta}$ enjoys similar consistency properties as $\hat{\beta}$. Even though our proof breaks down if $\mu_1 = o(1)$, other pilot estimates of $\hat{\tau}$ exist that do not require a non-vanishing $\mu_1$. An interesting example is the work of \cite{dudeja2018learning} proposing to estimate $\tau$ by maximizing $\hat{F}_l (\tau') = n^{-1}\sum_1^n y_i h_l(\langle \tau', x_i \rangle)$ over the unit sphere for an appropriately chosen $l \in \{1,2,\dots\}$. The consistency of this estimator instead requires $\mu_l \neq 0$ since $\E[ \hat{F}_l(\tau') ] = \mu_l \langle \tau, \tau' \rangle^l$ when $\Sigma = I$ (see the identity (\ref{hermite_inner_prod}) below for inner products of Hermite polynomials).
\end{remark}

\begin{remark}[Efficiency]
1. Using $y_n = g(\langle x_n, \tau \rangle) + e_n$ and $\E[e_n | x_n] = 0$ we can write
\begin{align*}
\E[ r_n^2 z_n^2] = \E[ r_n^2(y_n - \langle x_n, \beta \rangle)^2] = \E [r_n^2( g(\langle x_n, \tau \rangle) - \langle x_n, \beta \rangle)^2] + \E [r_n^2 e_n^2] \geq \E [r_n^2 e_n^2],
\end{align*}
with equality happening if and only if $\E [r_n^2( g(\langle x_n, \tau \rangle) - \langle x_n, \beta \rangle)^2] = 0$. This shows that the estimator in Theorem \ref{thm-efficient} is strictly more efficient than the one in Theorem \ref{thm_known_sig}, unless $g(\langle x_n, \tau \rangle) = \mu \cdot \langle x_n, \tau\rangle$ almost everywhere, in which case the asymptotic variances are equal.

2. (Lower bound on variance reduction). Let $\tilde{\beta}_1^L$ denote the debiased estimate obtained by linearization (as in Theorem \ref{thm_known_sig}) and $\tilde{\beta}_1^H$ be the estimate from Hermite expansion. Also assume that $g,g',g'' \in L^2(N(0,1))$.  The reduction in asymptotic variance (scaled by $n$) is given by
\begin{align*}
n (\V[\tilde{\beta}_1^L] - \V[\tilde{\beta}_1^H]) \approx  \sum_{\stackrel{ j=0}{ j \neq 1}}^\infty \left[ \frac{\mu_{j}^2}{\E[r_n^2]} + 2\tau_1^2 \left( j \mu_{j}^2 +   \sqrt{(j+1)(j+2)} \mu_j \mu_{j+2} \right) \right].
\end{align*}
A proof is given in Proposition \ref{variance_reduction} in the appendix. To gain some intuition, consider the special case where $\Sigma = I$ so that $\|\tau \|_2 = \E[r_n^2] = 1$. Then simple algebra after using the inequality $2 \sqrt{(j+1)(j+2)} \mu_j \mu_{j+2} \geq - (j+1) \mu_j^2 - (j+2) \mu_{j+2}^2 $ shows that the right-hand side of the above display is lower bounded by 
\begin{align*}
(1- \tau_1^2) \sum_{j=0, j \neq 1}^\infty \mu_j^2 = (1- \tau_1^2) \E[(g(\langle x, \tau \rangle) - \langle x, \beta \rangle)^2].
\end{align*}

\end{remark}

	\textbf{The growth rate of $m$.} In Theorem \ref{thm-efficient} we let the number of basis functions $\{h_j\}_{j=0}^m$ grow with $n$ at the rate $m = \lfloor \log^{\frac{2}{3}}(n) \rfloor$. The theorem continues to remain valid for slower rates of growth of $m$, as long as $m \rightarrow \infty$. The slow growth rate of $m$ is used to bound the variance of $\hat{\mu}_j$ via the fourth moment of $h_j(\langle \tau, x \rangle)$ which is exponential in $j$ (see Proposition \ref{hermite-prop}). While this rate of growth guarantees improvement in the $n\rightarrow \infty$ asymptotic regime, providing lower/upper bounds on the MSE in terms of $m$ for a fixed sample size $n$ is more challenging as these bounds would depend on the Hermite coefficients of the particular link function. To see this, observe that in Figure \ref{MSE}, the MSE (and similarly, the variance) initially oscillates on odd/even ordered Hermite expansions. In the particular case of $g(t) = 5 \sin(t)$ this can be explained by the fact that $\sin(t)$ is an odd function, so the even ordered Hermite coefficients are all zero. Thus adding even-ordered terms, for a fixed sample size, only adds noise without explaining any of the variance.
\newline
For implementation in practice, one can use a jackknife estimate of the variance to find the $m$ that minimizes the variance for a given sample size $n$. Note that, since the bias is typically negligible compared to the standard error (see Tables \ref{hermite_accuracy_known} and \ref{hermite_accuracy_unknown}), the Mean Squared Error (MSE) is largely determined by the variance so that minimizing the variance will also minimize the MSE. Figure \ref{MSE} shows agreement between the optimal $m$ for the MSE and its jackknife based estimate. 

\begin{figure}[!ht]
\subfloat[Mean Squared Error of Debiased Estimates]{\includegraphics[width = 3in, height = 2.1in]{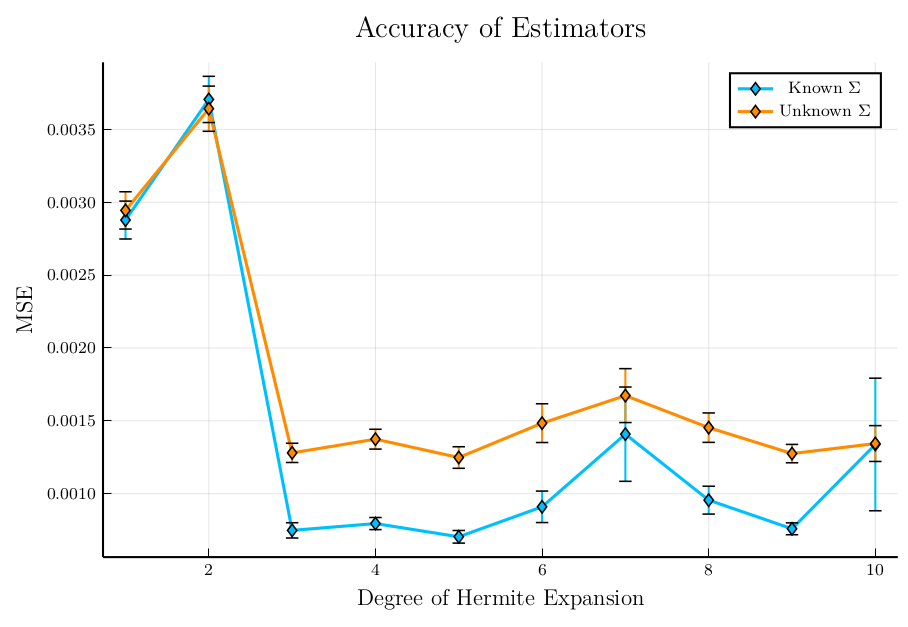}} 
\subfloat[Jackknife Estimates of Standard Error]{\includegraphics[width = 3in, height = 2.1in]{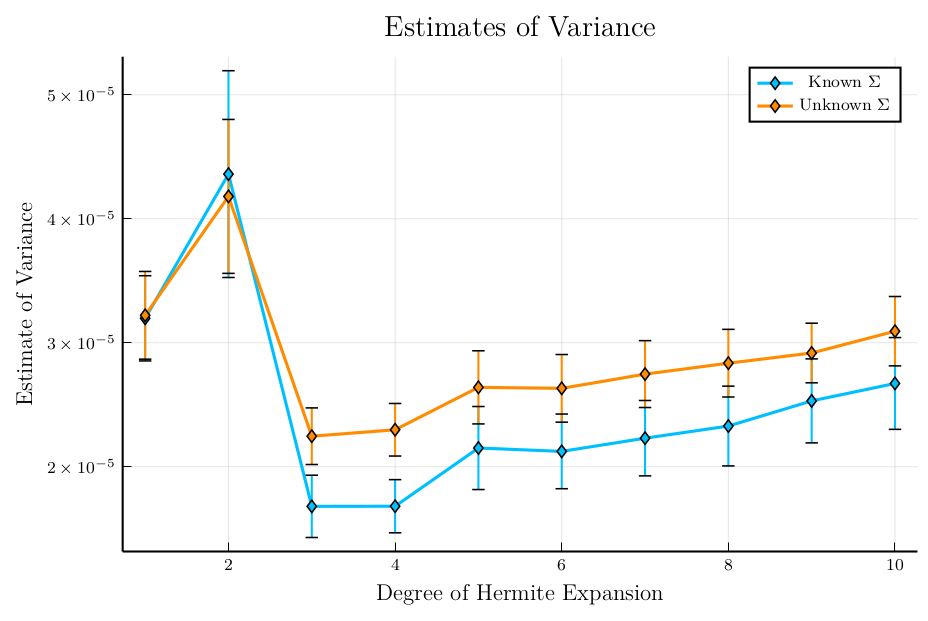}}
\caption{(a). The effect of higher order expansions on the Mean Squared Error (MSE). For each degree of expansion, the empirical MSEs were computed based on 1000 replications and are shown by the solid lines while the respective error bars denote their empirical standard errors.  (b). The Jackknife (leave-$k$-out) estimates of variance (up to a factor that does not depend on $m$) can be used to tune the order $m$ of the Hermite expansion. In this case, the solid lines depict the mean (computed over 10 realizations) of the jackknife estimate of the variance of the debiased estimator, and the error bars as before reflect the standard error of this quantity.}
\label{MSE}
\end{figure}

\subsection{Inference when $\Sigma$ is unknown} The estimator proposed in the previous section uses $\Sigma$ and $\gamma$ in the definition of $\hat{\mu}_1$ and $r_i$ respectively. When $\Sigma$ is unknown, it is natural to instead set $\hat{\mu}_1 := \|\hat{\Sigma}^\frac{1}{2} \hat{\beta}\|_2$. Similarly, one can define $r_i = x_{i,1} - \langle \hat{\gamma} , x_{i,-1} \rangle$ where $\hat{\gamma}$ is obtained using node-wise lasso as described in Section 2.2. 

The main difficulty in the case of unknown $\Sigma$ is that it is not possible to exactly standardize $\langle \hat{\tau}, x \rangle $ to have a $N(0,1)$ distribution. This is important in our analysis of the bias and variance of the estimator because the Hermite polynomials are only orthogonal with respect to standard normal random variables. More specifically, in the case of known $\Sigma$ we heavily use the following identity of \cite{o2014analysis} which is only valid for unit vectors\footnote{When $x \sim N_p(0,\Sigma)$ we use the identity with $\alpha = \Sigma^\frac12 \tau$ and $\hat{\alpha}=\Sigma^\frac12 \hat{\tau}$. Then $\langle \tau, x \rangle = \langle \alpha, \Sigma^{-\frac12} x \rangle \sim N(0,1)$ since by assumption $\Sigma^\frac12 \tau$ is a unit vector. On the other hand, $\langle \hat{\tau}, x \rangle$ has a standard normal distribution only if $\hat{\alpha} = \Sigma^\frac12 \hat{\tau} $ is a unit vector, a normalization that requires $\Sigma$ to be known.}
 $\alpha, \hat{\alpha} \in \mathbf{R}^p$:
\begin{align}\label{hermite_inner_prod}
\E[h_j( \langle \alpha, x \rangle ) h_k(\langle \hat{\alpha} , x \rangle)] = \langle \alpha, \hat{\alpha} \rangle^j \mathbf{1}[j = k] ,\quad \text{ for } x \sim N_p(0, I). 
\end{align}
Our Proposition \ref{non-unit-hermite-prop} in the appendix generalizes this identity to non-unit vectors $\alpha, \hat{\alpha}$ as follows
\begin{align*}
\E[ h_j( \langle \alpha, x \rangle ) h_k( \langle \hat{\alpha}, x \rangle ) ] = \sqrt{j! k! }  \sum_{ \stackrel{ 0 \leq i \leq \min \{ j,k \} }{ j \equiv_2 i \equiv_2 k}} \frac{ \langle \alpha, \hat{\alpha} \rangle^i \left(\frac{\| \alpha \|_2^2 - 1}{2}\right)^\frac{j-i}{2}\left(\frac{\| \hat\alpha \|_2^2 - 1}{2}\right)^\frac{k-i}{2}}{i! (\frac{j-i}{2})! (\frac{k-i}{2})! },
\end{align*}
where $x \sim N_p(0, I)$ and $i \equiv_2 j$ means that $i-j$ is an even integer. As suggested by the above display, using $\hat{\Sigma}$ to studentize $\langle \hat{\tau}, x \rangle$ leads to non-orthogonality of the Hermite polynomials, which in turn introduces extra bias terms in the error decomposition of the debiased estimator. In order to control these extra errors a slower rate of growth of $m=\lfloor \log^\frac{1}{4}(n) \rfloor$ (compared with $m = \lfloor \log^\frac{2}{3}(n) \rfloor$ in Theorem \ref{thm-efficient})  is assumed in Theorem \ref{thm-efficient-unknown}.

\begin{algorithm}[t]
\SetAlgoLined
\KwData{An iid sample $(x_i, y_i)_{i=1}^{2n}$} \KwIn{Tuning parameters $m,\lambda_\text{pilot}, \lambda_\text{node}$}
 \begin{enumerate}
 	\item Set $\check{\beta} := \arg\min_{\beta'} \left\{ (2n)^{-1}    \sum_{i=1}^{n} (y_i - \langle x_i, \beta' \rangle)^2 +\lambda_\text{pilot}' \| \beta' \|_1 \right\}$
 	\item Set $\hat{\beta} := \arg\min_{\beta'} \left\{ (2\lfloor n/3 \rfloor)^{-1}    \sum_{i=n+1}^{n + \lfloor n/3 \rfloor} (y_i - \langle x_i, \beta' \rangle)^2 +\lambda_\text{pilot} \| \beta' \|_1 \right\}$.
	 \item Set $\hat{\gamma} := \arg\min_{\gamma'} \left\{  \lfloor n/3 \rfloor^{-1}   \sum_{i= n + \lfloor n/3 \rfloor + 1}^{n + 2\lfloor n/3 \rfloor} (x_{i1} - \langle x_{i,-1}, \gamma' \rangle)^2 +\lambda_\text{node} \| \gamma' \|_1 \right\}$.
	\item Set $\hat{\Sigma} := \lfloor n/3 \rfloor^{-1} \sum_{i= n + \lfloor n/3 \rfloor + 1}^{n + 2\lfloor n/3 \rfloor} x_i x_i^T$.
	\item Set $\hat{\mu}_1 := \| \hat{\Sigma}^{\frac{1}{2}} \hat{\beta}\|_2$ and $\hat{\tau} := \hat{\mu}_1^{-1} \hat{\beta}$.
	\item For $0 \leq j \leq m$ and $ j \neq 1$ set
	$\hat{\mu}_j :=  \sum_{i= n +2 \lfloor n/3 \rfloor +1 }^{2n} y_i h_j(\langle x_i, \hat{\tau} \rangle) / (n -2 \lfloor n/3 \rfloor) $.
	\item \KwRet
	$\label{efficient-def-unknown}
	\tilde{\beta}_1 := \check\beta_1 + \frac{\sum_{i=1}^n \hat{r}_i(y_i - \langle \check{\beta} - \hat{\beta} , x_i \rangle - \sum_{j=0}^m \hat{\mu}_j h_j(\langle x_i, \hat{\tau} \rangle))}{\sum_{i=1}^n \hat{r}_i x_{i1}}\,$.
\end{enumerate}
 \caption{Hermite Estimator with Unknown $\Sigma$}\label{alg:efficient-unknown}
\end{algorithm}

\begin{theorem}\label{thm-efficient-unknown}
	Suppose that $(x_i, y_i)_{i=1}^{2n}$ are i.i.d. observations from the model
	\begin{align*}
	y_i = g(\langle \tau, x_i \rangle) + e_i, \quad x_i \sim N_p(0,\Sigma), \quad \E[e_i | x_i] = 0.
	\end{align*}
	Let $m = \lfloor \log^\frac{1}{4}(n) \rfloor$ and suppose that $\hat{\beta}, \hat{\tau}, \{\hat{\mu}_j\}_{j=0}^m$ are computed as in Algorithm \ref{alg:efficient-unknown}. Assume also that the following conditions are satisfied:
	\begin{enumerate}
		\item There exist $0 < c, C < \infty$ such that $c \leq \lambda_{\min}(\Sigma) \leq \lambda_{\max}(\Sigma) \leq C$. 
		\item There exists a constant $c' > 0$ such that $|\mu_1| = |\E [y_n \langle x_n, \tau \rangle]| > c'$ for all $n \geq 1$.
		\item The link function $g$ is differentiable with
		\begin{align*}
		\|g'\|_{L_2}^2 = \underset{\xi \sim N(0,1)}{\E}| g'(\xi)|^2 < L^2 < \infty,
		\end{align*}
		for a constant $L$ not depending on $n$.
		\item We have $s := \|\beta\|_0 \lor \|\gamma\|_0 = o(\frac{\sqrt{n}}{\log(p)})$.
		\item The estimate $\hat{\gamma}$ of $\gamma$, satisfies
		\begin{align*}
		\p \left(\|\hat{\gamma} - \gamma \|_1 \leq c_\gamma s\sqrt{\frac{ \log p }{n}}\right) \rightarrow 1, \text{ and } \,
		\p \left(\|\hat{\gamma} - \gamma \|_2 \leq c_\gamma' \sqrt{\frac{ s\log p }{n}}\right) \rightarrow 1,
		\end{align*}
		for constants $c_\gamma, c_\gamma'$ not dependent on $n$.

		\item $\E |e_n|^{2 + \alpha} < M < \infty$ for some $\alpha, M>0$ and all $n \geq 1$.
		\item $z_n$ is subgaussian with $\| z_n \|_{\psi_2} \leq \sigma_z$ for some $\sigma_z<\infty$ and all $n\geq 1$.	\end{enumerate}
	Then
	\begin{align*}
	\sqrt{n}(\tilde{\beta}_1 - \beta_1) = \nu \cdot \Xi_n + o_p(1), \quad \text{ where } \Xi_n \rightarrow_d N(0,1), \text{ and } \nu^2 = \frac{\E [r_n^2 e_n^2]}{(\E [r_n^2])^2} + \frac{\mu_3^2 \tau_1^2}{(\E[ r_n^2])^2}.
	\end{align*}
	
\end{theorem}

\begin{remark}
The variance of the $\tilde{\beta}_1$ in Theorem \ref{thm-efficient-unknown} has an extra term $\frac{\mu_3^2 \tau_1^2}{(\E[ r_n^2])^2}$ compared with the case of known $\Sigma$.  The inflated variance is due to the error in estimation of $\Sigma$ by $\hat{\Sigma}$. Since by assumption $\E[r_n^2] \geq \lambda_{\min}(\Sigma)$ is bounded away from zero, we obtain the same variance as in Theorem \ref{thm-efficient} if $\mu_3 \tau_1 = o( 1)$.
\end{remark}

\subsection{Simulations}

Next we describe our setup for the simulations\footnote{Julia code for these simulations is available at \url{https://github.com/ehamid/sim_debiasing}.}. We consider the effect of up to the tenth order Hermite expansions on the bias and variance of the debiased estimator. We take the nonlinear link function to be:
\begin{align*}
g(t) = 5\sin(t),
\end{align*}
and define $\tau$ by
\begin{align*}
\tau = \frac{\iota}{\sqrt{\iota^T \Sigma \iota}} 
\text{ where } 
\iota_i = \begin{cases}
	11 - i \, \text{ if } i \leq 10,\\
	0 \, \text { else,}
\end{cases}
\end{align*}
and $\Sigma_{ij} = 0.5^{|i - j|}$ for $1 \leq i,j \leq p = 2000$. A simple calculation shows that $\mu = 5/\sqrt{e}$ in this case, so that $\beta = 5 \tau / \sqrt{e}$.  For each one of 1000 Monte Carlo replications, $n = 1000$ observations were generated from $y = g(\langle x, \tau \rangle ) + 0.1 \varepsilon$ where $\varepsilon \sim N(0,1)$ and $x \sim N(0, \Sigma)$ and $\tilde{\beta}_1$ was computed as described in Section \ref{Efficient-Inference} for Hermite expansions of up to the tenth degree. Table \ref{hermite_accuracy_known} compares the bias, standard error and root mean squared error of these estimators while Table \ref{hermite_accuracy_unknown} shows the same information for the case of unknown $\Sigma$.

\textbf{Jackknife Estimate of Variance.} Given a sample of size $n = 1000$, we use a leave-$k$-out procedure (for $k = 10$) to obtain an estimate of the variance of $\tilde{\beta}_1$ as follows. For each $j = 1, \dots, 100$, we leave out the observations indexed by $10(j-1)+1, \dots, 10(j-1)+10$, and compute the debiased estimate on the remaining observation to obtain $\tilde{\beta}_1^{(j)}$. The estimate of variance of $\tilde{\beta}_j$ is then (up to a scaling factor that does not depend on $m$):
\begin{align*}
    \widehat{Var}(\tilde{\beta}_1) = \frac{1}{100}\sum_{j=1}^{100} (\tilde{\beta}_1^{(j)} - \bar{\beta}_1)^2,\quad  \text{ where } \quad \bar{\beta}_1 = \frac{1}{100} \sum_{j=1}^{100} \tilde{\beta}_1^{(j)}.
\end{align*}
Figure \ref{MSE}.(b) shows the relationship between these estimates and the order of Hermite expansion $m$.

\begin{table}[!htbp]
\setlength\tabcolsep{4pt}
\renewcommand{\arraystretch}{1.3}
	\small
	\centering
	\begin{tabular}{|cl|cccccccccc|}
		\hline
		\multicolumn{2}{|l|}{\diagbox[]{\scriptsize Measure}{ \scriptsize Degree}} & 1 & 2 & 3 & 4 & 5 & 6 & 7 & 8 & 9 & 10 \\ 
  \hline
Bias &  &-0.052 &0.014 &0.022 &-0.031 &0.004 &0.012 &0.03 &0.013 &-0.007 &0.059\\
Std. Error & & 1.696 &1.925 &0.865 &0.891 &0.839 &0.954 &1.186 &0.977 &0.871 &1.155 \\
RMSE & & 1.696 &1.925 &0.865 &0.892 &0.839 &0.954 &1.187 &0.977 &0.871 &1.157 \\
   \hline
	\end{tabular}
	\caption{Accuracy of $\tilde{\beta}_1$ with Hermite Expansions of orders 1 to 10 for Debiasing when $\Sigma$ is known. (The numbers have been scaled by $\sqrt{n}$ for better readability.)}
	\label{hermite_accuracy_known}
\end{table}

\begin{table}[!htbp]
\setlength\tabcolsep{4pt}
\renewcommand{\arraystretch}{1.3}
	\small
	\centering
	\begin{tabular}{|cl|cccccccccc|}
		\hline
		\multicolumn{2}{|l|}{\diagbox[]{\scriptsize Measure}{\scriptsize Degree}} & 1 & 2 & 3 & 4 & 5 & 6 & 7 & 8 & 9 & 10\\ 
  \hline
Bias &  & 0.176 &-0.06 &-0.115 &-0.183 &-0.097 &-0.12 &-0.097 &-0.134 &-0.121 &-0.077\\
Std. Error & & 1.707 &1.907 &1.126 &1.158 &1.113 &1.212 &1.29 &1.198 &1.123 &1.157\\
RMSE & & 1.716 &1.908 &1.131 &1.172 &1.117 &1.218 &1.293 &1.206 &1.129 &1.159 \\
   \hline
	\end{tabular}
	\caption{Accuracy of $ \tilde{\beta}_1$ with Hermite Expansions of orders 1 to 10 for Debiasing when $\Sigma$ is unknown. (The numbers have been scaled by $\sqrt{n}$ for better readability.)}
		\label{hermite_accuracy_unknown}

\end{table}

Appendix \ref{simulations} provides more simulations that illustrate the coverage of confidence intervals and the error rates of tests constructed using the results in this paper.



\section{Conclusions}
We have shown that in the generic single-index model, it is easy to obtain $\sqrt{n}$-consistent estimators of finite-dimensional components of $\beta$ in the high-dimensional setting using a procedure that is perfectly agnostic to the link function, provided we have a Gaussian (or more generally, elliptically symmetric) design. Even though this rate can be achieved under minimal assumptions on the link function, we also showed that using an estimate of the link function to refine the debiased estimator enhances efficiency. Some words of caveat are in order. First, the independence of $f$ and $x$ is critical to our development. Indeed, if $f$ depends upon $x$, there is no guarantee that one can estimate individual co-efficients at $\sqrt{n}$ rate. As an example, consider the binary choice model $\Delta = 1(\beta^T X + \epsilon > 0)$ where $\epsilon$ given $X$ depends non-trivially on $X$ \citep{manski1975maximum,manski1985semiparametric}. The recent results in the work of \cite{mukherjee2019non} (see Theorem 3.4) imply that when $p$ is fixed, the co-efficients of $\beta$ can be estimated at a rate no faster than $n^{1/3}$, with the maximum score estimator of Manski attaining this rate \citep{kim1990cube}. It is clear that we should not expect the debiasing approach of our paper to work in this model. Second, if $f$ is independent of $X$ and discontinuous, e.g. $f(t) = 1(t > 0) + \epsilon$ where $\epsilon > 0$, this becomes a multi-dimensional change-point problem (a change-plane problem to be precise) and the work of \cite{wei2018change} implies that the co-efficients are estimable even at rate $n$ (for the fixed $p$-case), and the $\sqrt{n}$ rate derived in this paper is sub-optimal. These two examples serve to illustrate the fact that while the debiasing scheme is attractive, it can fail under model-misspecification, and may not produce optimal convergence rates in certain cases. 
\newline
Of course the $\sqrt{n}$ rate will be typically optimal when $f$ is sufficiently smooth, e.g. $f(t) = P(t) + \epsilon$ where $\epsilon$ is independent of $X$ and $P$ is a polynomial of fixed degree. In this case, the debiased estimator has to be rate-optimal since even if we knew $P$ there is no way we can estimate the coefficients at a rate faster than $\sqrt{n}$. 

Several interesting questions remain open for future research. First, inference for high-dimensional single-index models beyond elliptically symmetric designs remains to be fully explored. Second, a finite-sample analysis of our estimator based on Hermite polynomials that elucidates the relationship between the order of expansion and the MSE would be useful in practice. Third, extending this estimator using more general bases than the Hermite polynomial basis for elliptically symmetric designs (alluded to at the end of Appendix \ref{Elliptical-Design}) remains to be studied.

\acks{We thank the anonymous referees for several helpful comments and corrections. This work was done under the auspices of NSF Grant DMS-1712962.}

\newpage

\appendix

\section{Inference for a General Elliptically Symmetric design} 
\label{Elliptical-Design} 
In this section, extensions of Proposition \ref{prop1}, Theorem \ref{thm_known_sig} and Theorem \ref{thm-unknown-sigma} to the more general setting of elliptically symmetric design are considered. We start by reviewing the definitions of elliptically symmetric and sub-gaussian vectors.

\begin{definition}
	A centered random vector $x \in \mathbf{R}^p$ follows an elliptically symmetric distribution with parameters $\Sigma$ and $F_v$ if 
	\begin{align*}
	x \stackrel{d}{=} v B U,
	\end{align*}
	where the random variable $v \in \mathbf{R}$ has distribution $F_v$, the random vector $U\in \mathbf{R}^p$ is uniformly distributed over the unit sphere $\mathbb{S}^{p-1}$ and is independent of $v$, and $B$ is a matrix satisfying $\Sigma = BB^T$. In this case we write $x \sim \mathcal{E}(0, \Sigma, F_v)$.
\end{definition}

Note that the matrix $B$ and the random variable $v$ in the above definition are not uniquely determined. In particular, for any orthogonal matrix $Q$ and $ t > 0$, if the pair $(B, v)$ satisfies the definition then so does the pair $(t BQ, v/t)$. For comparability with the case of Gaussian random vectors, in this work we assume that in this representation $\E [v^2] = p$, so that the variance-covariance matrix of $x$ is equal to $\Sigma$, i.e. $\E [x x^T] = \Sigma$.

It is well-known that elliptically symmetric distributions generalize the multivariate normal distribution, and in particular, include distributions that have heavier or lighter tails than the normal distribution. More precisely, in the above definition, if $v = \sqrt{u}$ where $u \sim \chi_p^2$, then $\mathcal{E}(0, \Sigma, F_v) = N(0, \Sigma)$.

\begin{definition}
	A centered random vector $x \in \mathbf{R}^p$ is subgaussian with subgaussian constant $\sigma$ if for all unit vectors $u \in \mathbb{S}^{p-1}$ we have that $\langle u, x \rangle$ is a subgaussian random variable with $\|\langle u, x \rangle \|_{\psi_2} \leq \sigma$. In this case we write $\|x\|_{\psi_2} \leq \sigma$.
\end{definition}

Under an elliptically symmetric design $x_i \sim\mathcal{E}(0, \Sigma, F_v)$, the linear representation $y_i = \mu \langle x_i, \tau \rangle + z_i$ is still valid with $\E [z_i x_i] = 0$, when $\mu$ and $z_i$ are defined by
\begin{align*}
\mu &= \E [y_i \langle x_i, \tau \rangle], \\
z_i &= y_i - \mu \langle x_i, \tau \rangle,
\end{align*}
and we use the normalization $\| \Sigma^\frac{1}{2} \tau \|_2 = 1$. The argument for $\E [z_i x_i] = 0$ is exactly as in the case of Gaussian design, since, as far as the distribution of $x_i$ is concerned, the proof in Section \ref{orthoproof} only requires the normalization $\| \Sigma^\frac{1}{2} \tau \|_2 = 1$ and the fact that the conditional expectation of $x$ given $\langle x, \tau \rangle$ is linear in $\langle x, \tau \rangle$, that is, there exists a (non-random) vector $b$ such that $\E[x \mid \langle x, \tau \rangle] = \langle x, \tau \rangle b$. The latter property also holds for elliptically symmetric random vectors \cite[Corollary 2.1]{goldstein2018structured}.

Besides the orthogonality property $\E [z_i x_i] = 0$, our proofs rely on controlling the tail probabilities of certain random variables, such as $\| z^T X \|_\infty$ in Proposition \ref{prop1} and $\max_{j \neq k} | \sum_{i=1}^n r_i z_i|$ in Theorems \ref{thm_known_sig} and \ref{thm-unknown-sigma}. In addition, in the case of unknown $\Sigma$, subgaussian tails of $x_i$ were used to control the moments of $r_i - \hat{r}_i = \langle x_{i,-1}, \hat{\gamma} - \gamma \rangle$. The assumption of sub-gaussianity of $x_i$ allows the same proofs go through in the case of elliptically symmetric designs.



\begin{remark}
	A sufficient condition for $x \sim \mathcal{E}(0, \Sigma, F_v)$ to be subgaussian is that in the representation $x = v B U$ the random variable $v$ is subgaussian. This follows because for all unit vectors $w \in \mathbb{S}^{p-1}$,
	\begin{align*}
	 |\langle w, x \rangle| &= | \langle w, BU \rangle| \cdot |v| \\
	 &\leq \|B^T w\|_2 \cdot |v|\\
	 &\leq \sqrt {\lambda_{\max}(BB^T)} |v| , \quad \text{a.s.}.
	 \end{align*}
	 Thus $\langle w, x \rangle$ is subgaussian if $v$ is subgaussian. Moreover, in this work we assume that $\lambda_{\max}(\Sigma)$ is uniformly (in $n,p$) bounded above, which implies that up to an absolute constant $x$ and $v$ have the same subgaussian constant.
\end{remark}

With these definitions, Proposition \ref{prop1} and Theorems \ref{thm_known_sig}, \ref{thm-unknown-sigma} can be extended as follows.

\begin{prop}\label{prop1'}
	Let $\hat{\beta}$ be the penalized lasso estimator defined in (\ref{pilot-def}). Suppose that model (\ref{model}) holds with $x \stackrel{iid}{\sim} \mathcal{E}(0,\Sigma, F_v)$ and assume that
	\begin{enumerate}
	    \item $v$ is subgaussian with $\|\nu\|_{\psi_2} \leq \sigma_x$
		\item $z_i = y_i -\langle x_i, \beta \rangle$ is subgaussian with $\|z_i\|_{\psi_2} \leq \sigma_z$ for all $1 \leq i \leq n$,
		\item $\beta$ is a $s$-sparse vector, i.e. $|\{j: \beta_j \neq 0\} |\leq s$,
		\item $\Sigma^{\frac{1}{2}}$ satisfies the restricted eigenvalue condition with parameters $(s,9, 2\alpha)$ for some $\alpha > 0$, and that $\alpha$ and $\lambda_{\max}(\Sigma)$ are bounded away from $0,\infty$.
	\end{enumerate}
	Then there exists an absolute constant $c_0 > 0$ such that for $\lambda > c_0 \sigma_z \sigma_x \sqrt{\log(p)/n}$ and $n \geq c_0 (1 \lor \sigma_x^4) s \log(p/s)$ we have
	\begin{align*}
	\|\hat{\beta} - \beta \|_1 &\leq \frac{12 s \lambda}{\alpha^2},  \\
	\| \hat{\beta} - \beta \|_2 &\leq \frac{3 \sqrt{s}\lambda}{\alpha^2},\\
	 \frac{\|X(\hat{\beta} - \beta )\|_2}{\sqrt{n}} &\leq \frac{3 \sqrt{s} \lambda}{\alpha},
	\end{align*}
	with probability no less than $1 - 2p^{-1}- \exp(-c_0 n^2/\sigma_x^4)$.
\end{prop}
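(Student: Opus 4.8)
The plan is to follow the now-standard analysis of the penalized Lasso due to \cite{bickel2009simultaneous}, observing that the elliptical structure of the design enters \emph{only} through the two probabilistic facts that drive that analysis. By the remark preceding the statement, subgaussianity of $v$ (Assumption 1) forces the rows $x_i$ to be subgaussian vectors with constant comparable to $\sigma_x$, since $\lambda_{\max}(\Sigma)$ is bounded. Consequently the entire argument used for the Gaussian design in Proposition \ref{prop1} carries over, provided I re-establish two high-probability events under the weaker assumption of subgaussian (rather than Gaussian) rows.

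First I would write down the basic inequality. Since $\hat{\beta}$ minimizes the penalized objective and $Y = X\beta + z$, setting $\delta = \hat{\beta} - \beta$ and expanding the quadratic term yields
\begin{align*}
\frac{1}{2n}\|X\delta\|_2^2 \leq \frac{1}{n} z^T X \delta + \lambda\left(\|\beta\|_1 - \|\hat{\beta}\|_1\right).
\end{align*}
Bounding $\frac{1}{n}|z^T X\delta| \leq \frac{1}{n}\|X^T z\|_\infty \cdot \|\delta\|_1$ and working on the event $\mathcal{A} = \{\frac{1}{n}\|X^T z\|_\infty \leq \lambda/2\}$ then gives, through the usual splitting of $\|\delta\|_1$ over the support $S$ of $\beta$ and its complement, both the cone condition $\|\delta_{S^c}\|_1 \leq 3\|\delta_S\|_1$ and the prediction bound $\frac{1}{n}\|X\delta\|_2^2 \lesssim \lambda \|\delta_S\|_1$. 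From here the stated $\ell_1$, $\ell_2$, and prediction-error bounds follow mechanically, once a restricted-eigenvalue inequality is available for the empirical Gram matrix $X^T X/n$.

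The two events I must control are therefore $\mathcal{A}$ and the event $\mathcal{B}$ on which $X/\sqrt{n}$ inherits the restricted-eigenvalue property of $\Sigma^{1/2}$ (Assumption 4), with constant $\alpha$ in place of $2\alpha$. For $\mathcal{A}$, the crucial point --- and the one place where the single-index structure (as opposed to a genuine linear model) matters --- is that $z_i$ is not independent of $x_i$, only uncorrelated with it via the orthogonality property $\E z_i x_i = 0$ recorded in Appendix \ref{Elliptical-Design}. I therefore cannot condition on $X$ and treat $z$ as exogenous noise. Instead I would note that for each fixed $j$ the summands $z_i x_{ij}$ are i.i.d., mean-zero, and subexponential with $\|z_i x_{ij}\|_{\psi_1} \lesssim \sigma_z \sigma_x$ (a product of two subgaussians), and apply Bernstein's inequality coordinatewise followed by a union bound over $j \in \{1,\dots,p\}$; choosing $\lambda \gtrsim \sigma_z \sigma_x \sqrt{\log(p)/n}$ places $t = \lambda/2$ in the subgaussian regime of the Bernstein bound and yields $\p(\mathcal{A}^c) \leq 2p^{-1}$. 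For $\mathcal{B}$, I would invoke a standard deviation bound for the sample covariance of subgaussian vectors (a Rudelson--Zhou-type restricted-eigenvalue transfer), which guarantees that whenever $n \gtrsim (1\lor\sigma_x^4)\, s\log(p/s)$, the empirical RE holds with the claimed exponentially small failure probability $\exp(-c_0 n^2/\sigma_x^4)$.

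I expect the RE transfer for $\mathcal{B}$ to be the main obstacle: verifying that a population restricted-eigenvalue condition on $\Sigma^{1/2}$ propagates to the empirical design is delicate for merely subgaussian (rather than Gaussian) rows, and it is what fixes both the sample-size requirement $n \gtrsim s\log(p/s)$ and the $\exp(-c_0 n^2/\sigma_x^4)$ term in the probability. The noise bound for $\mathcal{A}$, by contrast, is routine once one recognizes that the orthogonality property supplies exactly the mean-zero structure needed to invoke Bernstein.
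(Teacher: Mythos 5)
Your proposal is correct and takes essentially the same approach as the paper: the basic inequality plus H\"older, then Bernstein's inequality with a union bound for $\|X^T z\|_\infty$ (using the orthogonality $\E z_i x_i = 0$ to make the subexponential products $z_i x_{ij}$ mean-zero, exactly the point where the single-index structure matters), and a Rudelson--Zhou restricted-eigenvalue transfer for the subgaussian empirical design, which is the paper's Lemma \ref{RE_condition} invoking Theorem 6 of \cite{rudelson2013reconstruction} and is the source of both the sample-size requirement and the $\exp(-c_0 n^2/\sigma_x^4)$ term. The concluding cone-condition algebra giving the $\ell_1$, $\ell_2$, and prediction bounds matches the paper's proof of Proposition \ref{prop1} step for step.
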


\begin{theorem}\label{thm_known_sig_ellip}
	Let $\tilde{\beta}_1$ be the estimator defined by (\ref{est-def-known}). Suppose $(x_i, y_i)_{i=1}^n$ follow the model (\ref{model}) with $x_i \stackrel{iid}{\sim} \mathcal{E}(0,\Sigma, F_\mu)$ and let 
	\begin{align*}
	\nu^2 =\frac{\E [r_n^2 z_n^2]}{(\E [r_n^2])^2}.
	\end{align*}
	Assume also that the following conditions are satisfied:
	
	\begin{enumerate}
		\item $\|x_i\|_{\psi_2} \leq \sigma_x$ with $\sigma_x$ uniformly (over $n$) bounded above.
		\item The sparsity of $\beta$ satisfies $s = o(\frac{n}{\log^2(p)})$.
		\item There exist $0 < c, C < \infty$ such that $c \leq \lambda_{\min}(\Sigma) \leq \lambda_{\max}(\Sigma) \leq C$. 
		\item  $\E|y_n|^{2 + \alpha} \leq M < \infty$ for some $\alpha, M > 0$ and all $n \geq 1$.
        \item The pilot estimator $\hat{\beta}$ satisfies
        \begin{align*}
            \|\hat{\beta} - \beta\|_2 \lesssim \sqrt{\frac{s \log(p)}{n}},\quad \text{ with probability } 1-o(1).
        \end{align*}
	\end{enumerate}	
	
	Then
	\begin{align*}
	\sqrt{n}(\tilde{\beta}_1 - \beta_1) = \nu \cdot \Xi_n, \quad \text{ where }\quad \Xi_n \rightarrow_d N(0,1).
	\end{align*}
	
\end{theorem}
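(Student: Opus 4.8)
The plan is to follow the proof of Theorem \ref{thm_known_sig} almost verbatim, since the algebraic structure is identical; the only substantive change is that every place where Gaussianity was previously used to control a tail probability or a moment will now be serviced by the subgaussian-vector hypothesis (Assumption 1) together with the bounded-spectrum hypothesis (Assumption 3). First I would record the population identities that drive everything. Because $r_i = x_{i,1} - \langle\gamma, x_{i,-1}\rangle$ is the population projection residual, one has $\E r_i x_{i,-1} = 0$ and $\E r_i x_{i,1} = \E r_i^2$; and since $r_i = \langle a, x_i\rangle$ with $a = (1,-\gamma^\top)^\top$ is a linear functional of $x_i$, the elliptical orthogonality property $\E z_i x_i = 0$ (valid here exactly as in the Gaussian case; see the discussion at the beginning of this appendix) yields $\E r_i z_i = 0$. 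Substituting $y_i - \langle x_i,\hat\beta\rangle = z_i + \langle x_i, \beta - \hat\beta\rangle$ into (\ref{est-def-known}) and cancelling the $x_{i,1}$ contribution produces the standard decomposition
\begin{align*}
\sqrt{n}(\tilde\beta_1 - \beta_1) = \frac{\tfrac1{\sqrt n}\sum_{i=1}^n r_i z_i}{\tfrac1n\sum_{i=1}^n r_i x_{i,1}} + \frac{(\beta_{-1}-\hat\beta_{-1})^\top\,\tfrac1{\sqrt n}\sum_{i=1}^n r_i x_{i,-1}}{\tfrac1n\sum_{i=1}^n r_i x_{i,1}},
\end{align*}
whose two ratios I will call the main term and the bias term.

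For the main term I would first show the denominator converges in probability to $\E r_n^2$: the i.i.d.\ summands $r_i x_{i,1}$ have mean $\E r_n^2$ and, since $r_n$ and $x_{n,1}$ are subgaussian with bounded constants, satisfy $\E(r_n x_{n,1})^2 \le \sqrt{\E r_n^4}\,\sqrt{\E x_{n,1}^4} \lesssim 1$, so a weak law applies; moreover $\E r_n^2 \ge \lambda_{\min}(\Sigma) \ge c > 0$ is bounded away from zero. For the numerator I would invoke a Lyapunov central limit theorem for the triangular array $r_i z_i$, which are i.i.d.\ and mean-zero: their variance $\E r_n^2 z_n^2$ is finite by Hölder (using that $r_n$ has all moments and $\E|z_n|^{2+\alpha}\lesssim\E|y_n|^{2+\alpha}\le M$ by Assumption 4, the bound on $\mu$ coming from $|\mu|\lesssim\|y_n\|_{L_2}$), and for $\delta>0$ small enough $\E|r_n z_n|^{2+\delta}$ is uniformly bounded, so the Lyapunov ratio vanishes. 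Hence $\tfrac1{\sqrt n}\sum r_i z_i/\sqrt{\E r_n^2 z_n^2}\to_d N(0,1)$, and Slutsky shows the main term equals $\nu\cdot\Xi + o_p(1)$ with $\Xi \sim N(0,1)$; the degenerate case $\E r_n^2 z_n^2 = o(1)$ is handled exactly as in the remark following Theorem \ref{thm_known_sig}.

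The bias term is the crux, and it is where the sample-splitting construction is essential. Since the pilot $\hat\beta$ is computed on a subsample independent of the data defining the residual sample, I would condition on $\hat\beta$ and treat $w := \beta_{-1}-\hat\beta_{-1}$ as fixed. Writing $S_n := \tfrac1{\sqrt n}\sum_{i=1}^n r_i x_{i,-1}$, which has mean zero by $\E r_i x_{i,-1}=0$, the conditional second moment of the numerator is
\begin{align*}
\E\!\left[(w^\top S_n)^2 \,\middle|\, \hat\beta\right] = w^\top \Psi\, w \le \lambda_{\max}(\Psi)\,\|\hat\beta - \beta\|_2^2, \qquad \Psi := \E\!\left[r_n^2\, x_{n,-1} x_{n,-1}^\top\right].
\end{align*}
Here is the single place where the elliptical hypothesis replaces Gaussianity: I would bound $\lambda_{\max}(\Psi) = \sup_{\|u\|=1}\E[r_n^2\langle u,x_{n,-1}\rangle^2] \le \sqrt{\E r_n^4}\,\sup_{\|u\|=1}\sqrt{\E\langle u,x_{n,-1}\rangle^4} \lesssim 1$, invoking subgaussianity of $r_n$ and of the one-dimensional marginals of $x_n$ (Assumption 1) together with $\lambda_{\max}(\Sigma)\le C$ (Assumption 3). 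On the event that the pilot achieves its rate (probability $1-o(1)$ by Assumption 5), $\|\hat\beta-\beta\|_2^2 \lesssim s\log(p)/n$, so the conditional second moment is $\lesssim s\log(p)/n = o(1/\log p) = o(1)$ under the sparsity Assumption 2, and a conditional Markov inequality forces the bias term to be $o_p(1)$. It is precisely this variance-based argument, rather than a crude $\ell_1\times\ell_\infty$ Hölder bound (which would instead demand $s = o(\sqrt n/\log p)$), that permits the weaker requirement $s = o(n/\log^2 p)$. Combining the two terms completes the proof, and I expect the only genuinely delicate verifications to be the uniform boundedness of $\lambda_{\max}(\Psi)$ and of the $(2+\delta)$-moments of $r_n z_n$ under the subgaussian design, every remaining step being identical to the Gaussian case.
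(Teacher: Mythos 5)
Your proposal is correct, and its skeleton coincides with the paper's: the same decomposition into the ratio of $\frac{1}{\sqrt n}\sum_i r_i z_i$ plus the bias term over the denominator $\frac1n\sum_i r_i x_{i,1}$, the same Lyapunov CLT for the main term (with the identical H\"older/moment verification that $\E|y_n|^{2+\alpha}\leq M$, the bound on $|\mu|$, and subgaussianity of $r_n$ yield a uniformly bounded $(2+\alpha/2)$-moment of $r_n z_n$), and the same observation that the only Gaussian-specific ingredients in Theorem \ref{thm_known_sig} — the linearity $\E[x\mid\langle x,\tau\rangle]=\langle x,\tau\rangle b$ giving $\E z_i x_i=0$, and subgaussian marginals — survive under $\mathcal{E}(0,\Sigma,F_v)$ with subgaussian constant $\sigma_x$, which is exactly how the paper justifies the extension. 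Where you genuinely diverge is the bias term. The paper also conditions on the second subsample, but then treats $r_i\langle x_{i,-1},\Delta/\|\Delta\|_2\rangle/\E r_n^2$ as a mean-zero subexponential array and applies Bernstein's inequality, obtaining $|C|/\E r_n^2\lesssim\sqrt{\log p}\cdot\|\Delta\|_2\lesssim\log(p)\sqrt{s/n}$ with probability $1-2/p$; your replacement is a conditional second-moment computation, $\E[(w^\top S_n)^2\mid\hat\beta]=w^\top\Psi w\leq\lambda_{\max}(\Psi)\|\Delta\|_2^2$ with $\lambda_{\max}(\Psi)\lesssim1$ by Cauchy--Schwarz and subgaussian fourth moments, followed by Chebyshev/Markov. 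Both routes work under $s=o(n/\log^2 p)$; yours is more elementary (it needs only fourth moments of linear marginals, not subexponential concentration), in fact requires only $s\log(p)/n\to0$, and even yields the slightly sharper in-probability rate $O_p(\sqrt{s\log(p)/n})$ versus the paper's $\log(p)\sqrt{s/n}$ — the trade-off being that the paper's Bernstein bound holds with polynomially high probability $1-2/p$, which is reusable for union bounds over coordinates (and is in fact recycled in the proof of Theorem \ref{thm-unknown-sigma}), whereas Chebyshev gives only a fixed-$\epsilon$ statement, which suffices for the stated $o_p(1)$ conclusion. One small mischaracterization: your remark that the alternative to the variance argument is a ``crude $\ell_1\times\ell_\infty$ H\"older bound'' forcing $s=o(\sqrt n/\log p)$ does not describe the paper's known-$\Sigma$ proof, which avoids H\"older via the conditional Bernstein argument and attains the same sparsity regime; the $\ell_1\times\ell_\infty$ bound and the stronger sparsity condition arise only in the unknown-$\Sigma$ setting of Theorem \ref{thm-unknown-sigma}. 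Also note the degenerate case $\E r_n^2 z_n^2\to0$: like the paper, your Lyapunov ratio implicitly assumes this variance is bounded away from zero, and your deferral to the remark following Theorem \ref{thm_known_sig} matches the paper's treatment.
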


	\begin{theorem}(Unknown $\Sigma$) 
	\label{thm-unknown-sig-ellip}
		Let $\tilde{\beta}_1$ be the estimator defined in (\ref{est-def-unknown}). Suppose $(x_i, y_i)_{i=1}^{2n}$ follow the model (\ref{model}) with $x_i \stackrel{iid}{\sim} \mathcal{E}(0,\Sigma, F_\mu)$ and assume that the following conditions are satisfied:
		
		\begin{enumerate}
			\item $\|x_i\|_{\psi_2} \leq \sigma_x$ with $\sigma_x$ uniformly (over $n$) bounded above.
		\item We have $s := \|\beta\|_0 \lor \|\gamma\|_0 = o(\frac{\sqrt{n}}{\log(p)})$.
			\item There exists an estimate $\hat{\gamma}$ of $\gamma$ that is independent of $(x_i, y_i)_{i=1}^n$ and satisfies
			\begin{align*}
			\p(\|\hat{\gamma} - \gamma \|_1 \leq c_\gamma s\sqrt{\frac{ \log p }{n}}) \rightarrow 1,
			\end{align*}
			for a constant $c_\gamma$ not dependent on $n$.
			\item There exist $0 < c, C < \infty$ such that $c \leq \lambda_{\min}(\Sigma) \leq \lambda_{\max}(\Sigma) \leq C$. 
		\item $z_n$ is subgaussian with $\| z_n \|_{\psi_2} \leq \sigma_z$ for some $\sigma_z<\infty$ and all $n\geq 1$.
			
		\end{enumerate}	
		
		Then
	\begin{align*}
	\sqrt{n}(\tilde{\beta}_1 - \beta_1) = \nu \cdot \Xi_n, \quad \text{ where }\quad \Xi_n \rightarrow_d N(0,1).
	\end{align*}
		
	\end{theorem}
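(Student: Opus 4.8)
The plan is to recognise that the elliptical case is algebraically identical to the Gaussian case treated in Theorem~\ref{thm-unknown-sigma}: the linear representation $y_i=\langle x_i,\beta\rangle+z_i$ together with the orthogonality $\E z_i x_i=0$ continues to hold (exactly the content recalled at the start of this appendix, which needs only $\|\Sigma^{1/2}\tau\|_2=1$ and linearity of $\E[x\mid\langle x,\tau\rangle]$). Consequently every step of the proof of Theorem~\ref{thm-unknown-sigma} can be reused, the only difference being that wherever the Gaussian law of $x$ was used to obtain concentration or to bound moments I instead invoke the subgaussianity of $x_i$ granted by Assumption~1. I would begin by expanding, exactly as in the Gaussian proof,
\begin{align*}
\sqrt n(\tilde\beta_1-\beta_1)=\frac{n^{-1/2}\sum_{i=1}^n \hat r_i z_i}{n^{-1}\sum_{i=1}^n \hat r_i x_{i,1}}-\frac{n^{-1/2}\sum_{i=1}^n \hat r_i\langle x_{i,-1},\hat\beta_{-1}-\beta_{-1}\rangle}{n^{-1}\sum_{i=1}^n \hat r_i x_{i,1}},
\end{align*}
using $y_i-\langle x_i,\hat\beta\rangle=z_i-\langle x_i,\hat\beta-\beta\rangle$ and cancelling the coordinate-$1$ contribution against $\hat\beta_1-\beta_1$. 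Throughout I condition on the second subsample $\mathcal S_2$, on which both $\hat\gamma$ and the pilot $\hat\beta$ are computed; by Assumption~3 this makes $\hat\gamma$ (hence each $\hat r_i$, $i\le n$) and $\hat\beta$ independent of the fresh data $\{(x_i,z_i)\}_{i\le n}$ in $\mathcal S_1$, which is what licenses the conditional mean-zero arguments below.

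For the first (dominant) ratio I would show the denominator converges in probability to $\E r_n^2$ and the numerator is asymptotically $N(0,\E r_n^2 z_n^2)$. Writing $\hat r_i=r_i-\langle x_{i,-1},\hat\gamma-\gamma\rangle$, the replacement error in both numerator and denominator is a conditionally centered average whose conditional second moment is $\lesssim\|\hat\gamma-\gamma\|_2^2$; here I bound moments of $r_n$, $z_n$ and $\langle x_{n,-1},\hat\gamma-\gamma\rangle$ by Cauchy--Schwarz together with the subgaussian fourth-moment bounds (Assumptions~1 and~5), and I use the node-wise-lasso $\ell_2$ rate $\|\hat\gamma-\gamma\|_2\lesssim\sqrt{s\log p/n}$ supplied by the elliptical Proposition~\ref{prop1'} applied to the regression $x_{i,1}=\langle\gamma,x_{i,-1}\rangle+r_i$. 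This reduces the denominator to $n^{-1}\sum r_i x_{i,1}\to_p\E r_n^2$ (bounded below by $\lambda_{\min}(\Sigma)$ via Assumption~4, so division is safe) and the numerator to $n^{-1/2}\sum r_i z_i$, to which I apply a Lyapunov/Lindeberg CLT: orthogonality gives $\E r_iz_i=0$, while subgaussianity of $r_n,z_n$ furnishes the uniform $(2+\delta)$-moment bound needed for the triangular array. Slutsky then yields $N(0,\nu^2)$ for this ratio.

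The second ratio is the heart of the argument. Its numerator equals $\sqrt n\,\langle\hat v,\hat\beta_{-1}-\beta_{-1}\rangle$ with $\hat v=n^{-1}\sum_{i\le n}\hat r_i x_{i,-1}$, and since $\E[\hat v\mid\mathcal S_2]=-\Sigma_{-1,-1}(\hat\gamma-\gamma)$ it splits into the conditionally deterministic bias $-\sqrt n\,(\hat\gamma-\gamma)^\top\Sigma_{-1,-1}(\hat\beta_{-1}-\beta_{-1})$ plus a conditionally centered fluctuation. The bias is bounded by $\sqrt n\,\lambda_{\max}(\Sigma)\,\|\hat\gamma-\gamma\|_2\,\|\hat\beta-\beta\|_2\lesssim\sqrt n\cdot(s\log p/n)=s\log p/\sqrt n$, using the $\ell_2$ rates $\|\hat\beta-\beta\|_2,\|\hat\gamma-\gamma\|_2\lesssim\sqrt{s\log p/n}$ from Proposition~\ref{prop1'}; this is $o(1)$ precisely under the sparsity scaling $s=o(\sqrt n/\log p)$ of Assumption~2. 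The fluctuation has conditional variance $\lesssim\E[\hat r_n^2\langle x_{n,-1},\hat\beta_{-1}-\beta_{-1}\rangle^2\mid\mathcal S_2]\lesssim\|\hat\beta-\beta\|_2^2\lesssim s\log p/n=o(1)$, again by subgaussian fourth-moment bounds. Hence the second ratio is $o_p(1)$, and combining with the first ratio proves the claim.

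I expect the main obstacle to be exactly this bilinear bias term: it is the step where the matching between the $\ell_2$ estimation rates and the sparsity budget is tight, and where one must ensure that the node-wise lasso delivers the $\ell_2$ rate (not merely the $\ell_1$ rate stated in Assumption~3) so that $\sqrt n\,\|\hat\gamma-\gamma\|_2\,\|\hat\beta-\beta\|_2$ stays $o(1)$; a cruder $\ell_1\times\ell_\infty$ split would force the stronger condition $s=o(n^{1/4}/\sqrt{\log p})$. The secondary, more mechanical difficulty is the systematic replacement of Gaussian moment identities by subgaussian bounds: every appearance of $\E r_n^2 z_n^2$, $\E r_n^4$, or $\E\langle x_{n,-1},\hat\beta-\beta\rangle^4$ must be controlled uniformly in $n,p$ through Cauchy--Schwarz and the definition of a subgaussian vector, and the CLT must be read as a triangular-array statement with a possibly $n$-dependent variance $\nu^2$ (the degenerate case $\E r_n^2z_n^2\to0$ being absorbed into the $\nu\cdot\Xi$ form, consistent with the remark following Theorem~\ref{thm_known_sig}).
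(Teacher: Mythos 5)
Your high-level framing matches the paper exactly: the paper proves this theorem by observing that the proof of Theorem \ref{thm-unknown-sigma} only uses the orthogonality $\E z_i x_i = 0$ (which survives under elliptical symmetry via linearity of $\E[x \mid \langle x, \tau\rangle]$) together with tail/moment control that needs only subgaussianity of $x_i$, so the Gaussian argument ports over verbatim. Where you genuinely diverge is in how the cross terms are controlled. The paper decomposes $\sqrt{n}(\tilde\beta_1 - \beta_1) = A + B + C$ with $A$ the known-$\Sigma$ part, $B = \sqrt{n}(\hat R - R)^T X(\beta - \hat\beta)$ and $C = \sqrt{n}(\hat R - R)^T z$, and bounds $B$ and $C$ by H\"older: the $\ell_1$ norm of the difference of scaled nodewise coefficient vectors (rate $s\sqrt{\log p / n}$, via Theorem 2.4 of van de Geer et al.) multiplied by, respectively, $\|\hat\Sigma(\beta - \hat\beta)\|_\infty = \mathcal{O}_p(\sqrt{\log p/n})$ from the KKT conditions of the lasso pilot, and $\|X^T z/\sqrt{n}\|_\infty = \mathcal{O}_p(\sqrt{\log p})$. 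You instead condition on $\mathcal{S}_2$ and run conditional mean/variance computations, isolating the bilinear bias $-\sqrt{n}(\hat\gamma - \gamma)^T \Sigma_{-1,-1}(\hat\beta_{-1} - \beta_{-1})$ and bounding it by $\sqrt{n}\,\lambda_{\max}(\Sigma)\|\hat\gamma-\gamma\|_2\|\hat\beta-\beta\|_2$. Both routes arrive at $s\log p/\sqrt{n}$ and hence the same sparsity budget $s = o(\sqrt{n}/\log p)$; your treatment of the analogue of $C$ (conditional variance $\lesssim \|\hat\gamma - \gamma\|_2^2$) is in fact less demanding than the paper's bound. However, your closing aside is wrong as a characterization of the alternative: the paper's $\ell_1 \times \ell_\infty$ split is \emph{not} cruder and does not force $s = o(n^{1/4}/\sqrt{\log p})$, because its $\ell_\infty$ factor is the KKT residual-correlation bound, not a norm of $\hat\beta - \beta$; the $n^{1/4}$ budget arises only from a double-$\ell_1$ split.

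The one real mismatch with the theorem as stated is the step you yourself flag: your bias bound requires the $\ell_2$ rate $\|\hat\gamma - \gamma\|_2 \lesssim \sqrt{s\log p/n}$, while Assumption 3 grants only the $\ell_1$ rate; using $\|\hat\gamma - \gamma\|_2 \leq \|\hat\gamma - \gamma\|_1$ would yield $s^{3/2}\log p/\sqrt{n}$ and the stronger requirement $s = o(n^{1/3}/(\log p)^{2/3})$. Your patch — taking $\hat\gamma$ to be the nodewise lasso and invoking Proposition \ref{prop1'} — is legitimate under the theorem's hypotheses (Assumption 4 makes $\Sigma^{\frac{1}{2}}$ satisfy the RE condition trivially, and Lemma \ref{residual_prop} makes $r_i$ subgaussian), but it silently strengthens the abstract Assumption 3 into a statement about a specific estimator, whereas the paper's H\"older-plus-KKT route consumes only the assumed $\ell_1$ rate of $\hat\gamma$ (its estimator-specific input being the KKT condition of the pilot instead). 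If you want your conditional-moments argument to go through under the stated hypotheses, bound the bias by $\|\hat\gamma - \gamma\|_1 \cdot \|\Sigma_{-1,-1}(\hat\beta_{-1} - \beta_{-1})\|_\infty$ and control the $\ell_\infty$ factor by the KKT bound plus $\|(\Sigma - \hat\Sigma)(\hat\beta - \beta)\|_\infty \leq \max_{j,k}|\Sigma_{jk} - \hat\Sigma_{jk}| \cdot \|\hat\beta - \beta\|_1 = \mathcal{O}_p(s\log p/n)$, which recovers $s\log p/\sqrt{n}$ with the $\ell_1$ assumption alone. The remaining components of your proposal — the exact cancellation of $\hat\beta_1$, the conditional centering of $n^{-1/2}\sum_i \langle x_{i,-1}, \hat\gamma - \gamma\rangle z_i$, the denominator's convergence using $\E r_n^2 \geq \lambda_{\min}(\Sigma) \geq c$, the Lyapunov CLT from subgaussianity of $r_n$ and $z_n$, and the degenerate-variance reading of $\nu \cdot \Xi + o_p(1)$ — are all sound and consistent with the paper.
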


\begin{remark}
    The remarks that followed Proposition \ref{prop1} and Theorems \ref{thm_known_sig}, \ref{thm-unknown-sigma} remain valid in the more general context of Proposition \ref{prop1'} and Theorems \ref{thm_known_sig_ellip} and \ref{thm-unknown-sig-ellip}. For the sake of brevity they are not restated here.
\end{remark}

\textbf{Extension of Theorem \ref{thm-efficient}}. In order to extend Theorem \ref{thm-efficient} to elliptically symmetric designs, one would first need to construct an orthogonal basis with respect to the distribution of $\langle x, \tau \rangle$. It is possible to adapt the Hermite polynomial basis to this setting (whenever $N(0,1)$ is absolutely continuous with respect to the distribution of $\langle x, \tau \rangle$) as follows. Let $f_e$ be the density of $\langle x, \tau \rangle$, for $ x \sim  \mathcal{E}(0,\Sigma, F_\mu)$ and let $\varphi$ be the density of the standard normal distribution. Then the functions 

\begin{align*}
h^{e}_j = h_j \cdot \sqrt{\frac{\varphi}{f_e}}
\end{align*}
are orthonormal with respect to the distribution of $\langle x, \tau \rangle$, that is,
\begin{align*}
\E[ h^{e}_j (\langle x, \tau \rangle) h^{e}_k(\langle x, \tau \rangle) ] = \mathbf{1}\{ j = k \}.
\end{align*}
Thus under an elliptically symmetric design one could proceed as in Section \ref{Efficient-Inference} and replacing $h_j$ with $h^{e}_j$ in the construction of the estimator. We leave the rigorous analysis of this extension to future work.

\section{(Tail bounds)} \label{tailbounds}
We collect in this appendix some facts about sub-Gaussian and sub-exponential random variables. Proofs can be found in chapter 2 of \citet{vershynin2018high}.

Denote by $\| \cdot \|_{\psi_2}$ and $\| \cdot \|_{\psi_1}$ the sub-gaussian and the sub-exponential norms, respectively.
\begin{prop}\label{sub-g-e}
	
	There exists an absolute constant $C > 0$ such that the following are true:
	
	\begin{enumerate}
		\item If $X \sim N(0, \sigma^2)$, then $\| X \|_{\psi_2} \leq C \sigma$.
		
		\item If $X$ is sub-guassian, then $X^2$ is sub-exponential and $\|X^2\|_{\psi_1} = \| X \|_{\psi_2}^2$.
		
		\item If $X,Y$ are sub-gaussian, then $XY$ is sub-exponential and $\| XY \|_{\psi_1} \leq \| X \|_{\psi_2}\cdot \| Y \|_{\psi_2}$.
		
		\item If $X$ is sub-exponential then $\|X - \E X\|_{\psi_1} \leq C \| X\|_{\psi_1}$.
		
		\item (Bernstein's Inequality). Let $x_1, \dots, x_n$ be independent, mean zero sub-exponential random variables and $a = (a_1, \dots, a_n) \in \mathbf{R}^n$. Then for every $t>0$ we have
		\begin{align*}
		\p\{ | \sum_{i=1} a_i x_i | \geq t \} \leq 2 \exp\left[ - c \min\left( \frac{t^2}{K^2\|a\|_2^2}, \frac{t}{K \|a\|_\infty} \right) \right],
		\end{align*} 	
		where $K = \max_i \| x_i\|_{\psi_1}$ and $c$ is an absolute constant.
	\end{enumerate}
\end{prop}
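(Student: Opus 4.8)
The plan is to derive all five items directly from the Orlicz-norm definitions $\|X\|_{\psi_2} = \inf\{t>0 : \E \exp(X^2/t^2) \le 2\}$ and $\|X\|_{\psi_1} = \inf\{t>0 : \E\exp(|X|/t) \le 2\}$, treating the first four as structural facts about these norms and reserving the genuinely quantitative work for the tail bound in item 5. Throughout, $c, C$ denote absolute constants whose value may change from line to line.

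For item 1 I would compute the Gaussian integral explicitly: for $X \sim N(0,\sigma^2)$ and $t > \sqrt{2}\,\sigma$ one has $\E\exp(X^2/t^2) = (1 - 2\sigma^2/t^2)^{-1/2}$, and requiring this to be at most $2$ gives $t^2 \ge (8/3)\sigma^2$, so $\|X\|_{\psi_2} \le \sqrt{8/3}\,\sigma$. Item 2 is then a change of variables in the defining infimum: the conditions $\E\exp(X^2/t) \le 2$ and $\E\exp(X^2/s^2) \le 2$ coincide under $t = s^2$, so the two infima match and $\|X^2\|_{\psi_1} = \|X\|_{\psi_2}^2$ exactly. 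For item 3 I would normalize and invoke Young's inequality $|XY| \le \tfrac12 X^2 + \tfrac12 Y^2$ together with the elementary bound $e^{(a+b)/2} \le \tfrac12 e^a + \tfrac12 e^b$; applied to $X/\|X\|_{\psi_2}$ and $Y/\|Y\|_{\psi_2}$ this yields $\E\exp(|XY|/(\|X\|_{\psi_2}\|Y\|_{\psi_2})) \le 2$, which is precisely the claim. Item 4 follows from the triangle inequality $\|X - \E X\|_{\psi_1} \le \|X\|_{\psi_1} + \|\E X\|_{\psi_1}$ once one notes that for the constant $\E X$ the norm is $|\E X|/\log 2$ and that $|\E X| \le \E|X| \le C\|X\|_{\psi_1}$, the last inequality being the standard domination of the $L^1$ norm by the sub-exponential norm.

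The substantive step is item 5, which I would prove by the Chernoff method. The key lemma is that a mean-zero sub-exponential $W$ satisfies the moment generating function bound $\E\exp(\lambda W) \le \exp(C\lambda^2\|W\|_{\psi_1}^2)$ for all $|\lambda| \le c/\|W\|_{\psi_1}$; this is obtained by expanding the exponential, inserting the moment bounds $\E|W|^k \le (Ck\|W\|_{\psi_1})^k$ that follow from sub-exponentiality, and summing the resulting geometric-type series in the regime where $\lambda\|W\|_{\psi_1}$ is small. Applying this to each $W_i = a_i x_i$, with $\|W_i\|_{\psi_1} \le |a_i| K$, and using independence to factor the joint MGF gives $\E\exp(\lambda\sum_i a_i x_i) \le \exp(C\lambda^2 K^2\|a\|_2^2)$ valid for $|\lambda| \le c/(K\|a\|_\infty)$. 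A Markov bound then yields $\p(\sum_i a_i x_i \ge t) \le \exp(-\lambda t + C\lambda^2 K^2\|a\|_2^2)$, and I would optimize over $\lambda$ in the admissible range: the unconstrained minimizer $\lambda = t/(2CK^2\|a\|_2^2)$ produces the sub-gaussian term $\exp(-c t^2/(K^2\|a\|_2^2))$ when it lies in range, and otherwise the boundary value $\lambda = c/(K\|a\|_\infty)$ produces the sub-exponential term $\exp(-ct/(K\|a\|_\infty))$; together these give the $\min$ of the two exponents. A symmetric argument applied to $-\sum_i a_i x_i$ together with a union bound supplies the final factor of $2$.

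The main obstacle is the moment generating function lemma underlying Bernstein's inequality, and in particular the restriction of its validity to small $\lambda$: the MGF of a sub-exponential variable need not exist for large $\lambda$, and it is exactly this one-sided existence that forces the Chernoff optimization to split into two regimes and so produces the characteristic $\min$ of a sub-gaussian and a sub-exponential tail. Controlling the Taylor series for the MGF via the sub-exponential moment growth --- equivalently, establishing the equivalence between the $\psi_1$ norm and the growth rate of $(\E|W|^k)^{1/k}$ --- is the one genuinely quantitative estimate in the proposition; the remaining items amount to bookkeeping with the Orlicz norms.
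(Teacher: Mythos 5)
Your proposal is correct, and since the paper offers no proof of its own --- it simply cites Chapter 2 of \cite{vershynin2018high} --- your arguments (the explicit Gaussian MGF computation, the change of variables $t = s^2$, Young's inequality with convexity of the exponential, centering via the triangle inequality, and the Chernoff bound with the two-regime optimization over the restricted range of $\lambda$) are precisely the standard ones from that reference. In particular, your handling of the one genuinely quantitative step, the MGF lemma $\E\exp(\lambda W) \leq \exp(C\lambda^2 \|W\|_{\psi_1}^2)$ for $|\lambda| \leq c/\|W\|_{\psi_1}$ via the moment growth $(\E|W|^k)^{1/k} \lesssim k\|W\|_{\psi_1}$, and the resulting split into sub-gaussian and sub-exponential tail regimes, is exactly right.
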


The following corollary will be used multiple times in the text.
\begin{corollary}\label{subexp-cor}
	Suppose that $x_i \in \mathbf{R}^p$ are i.i.d. random vectors with $\|x_{ij}\|_{\psi_1} \leq \rho_x$ for $1 \leq i \leq n$ and $1 \leq j \leq p$. Assume also that $\log(p)/n \rightarrow 0$. Then
	\begin{enumerate}
		\item for any $ 1 \leq i \leq n$ and $1 \leq j \leq p$, the variable $x_{ij} - \E x_{ij}$ is sub-exponential with $\|x_{ij} - \E x_{ij} \|_{\psi_1} \leq C \rho_x$, for some absolute constant $C$.
		
		\item 
		We have $\p( \max_{1 \leq j \leq p} |\frac{1}{\sqrt{n}}\sum_{i=1}^n x_{ij} - \E x_{ij}| < C \rho_x \sqrt{\log p} ) \rightarrow 1$ for an absolute constant $C > 0$ 
%
	\end{enumerate}
	
\end{corollary}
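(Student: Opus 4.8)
The plan is to derive both parts directly from Proposition \ref{sub-g-e}. Part 1 is immediate: since $\|x_{ij}\|_{\psi_1} \leq \rho_x < \infty$, each $x_{ij}$ is sub-exponential (in particular all its moments are finite, so $\E x_{ij}$ is well-defined), and applying item 4 of Proposition \ref{sub-g-e} with $X = x_{ij}$ yields $\|x_{ij} - \E x_{ij}\|_{\psi_1} \leq C \|x_{ij}\|_{\psi_1} \leq C \rho_x$, with $C$ the absolute constant appearing there.

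For part 2, I would fix $j$ and set $w_i := x_{ij} - \E x_{ij}$, which by part 1 are i.i.d., mean zero, and sub-exponential with $\|w_i\|_{\psi_1} \leq K := C\rho_x$. Applying Bernstein's inequality (item 5 of Proposition \ref{sub-g-e}) to the $w_i$ with coefficient vector $a = (1/\sqrt{n},\dots,1/\sqrt{n})$, so that $\|a\|_2^2 = 1$ and $\|a\|_\infty = 1/\sqrt{n}$, gives for every $t>0$
\begin{align*}
\p\left\{ \left| \tfrac{1}{\sqrt{n}} \sum_{i=1}^n w_i \right| \geq t \right\} \leq 2 \exp\left[ -c \min\left( \tfrac{t^2}{K^2},\ \tfrac{t\sqrt{n}}{K} \right) \right].
\end{align*}

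The key step is the choice $t = C'\rho_x\sqrt{\log p}$ for a sufficiently large absolute constant $C'$. With this choice the first argument of the minimum equals $(C'/C)^2 \log p$ and the second equals $(C'/C)\sqrt{n\log p}$; the hypothesis $\log p / n \to 0$ forces $\sqrt{n\log p}/\log p = \sqrt{n/\log p} \to \infty$, so the quadratic branch dominates for all large $n$ and the tail bound becomes $2 p^{-c(C'/C)^2}$. A union bound over $j = 1,\dots,p$ then yields
\begin{align*}
\p\left\{ \max_{1\leq j \leq p} \left| \tfrac{1}{\sqrt{n}} \sum_{i=1}^n (x_{ij} - \E x_{ij}) \right| \geq C'\rho_x\sqrt{\log p} \right\} \leq 2 p^{\,1 - c(C'/C)^2},
\end{align*}
and choosing $C'$ so that $c(C'/C)^2 > 1$ (e.g.\ so the exponent is $-1$) sends this probability to $0$, whence the complementary event has probability tending to $1$, which is the claim.

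I do not anticipate a genuine obstacle here, as the corollary is a packaging of standard tail estimates. The only point demanding care is confirming that it is the sub-gaussian (quadratic) branch of Bernstein's minimum that controls the tail rather than the linear branch; this is precisely where the scaling assumption $\log p / n \to 0$ is used, and it is what pins the threshold at the order $\rho_x\sqrt{\log p}$ rather than something larger.
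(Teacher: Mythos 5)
Your proposal is correct and follows the paper's proof essentially verbatim: part 1 via the centering property of the $\psi_1$ norm, and part 2 via Bernstein's inequality with $a_i = 1/\sqrt{n}$, threshold $t \asymp \rho_x\sqrt{\log p}$, the observation that $\log p/n \to 0$ makes the sub-gaussian branch of the minimum prevail for large $n$, and a union bound over $j$ with the constant chosen so the exponent beats the factor $p$. The paper's argument differs only in notation (it writes the constant as $\kappa$ and states the branch-comparison inequality explicitly), so there is nothing to add.
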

\begin{proof}
	1. This follows immediately from Proposition \ref{sub-g-e}.
	
	2. Apply Bernstein's inequality with $a_i = 1/\sqrt{n}$ and $t = \kappa \rho_x \sqrt{\log p}$ for a constant $\kappa$ that will be determined shortly. We obtain for each $1 \leq j \leq p$,
	\begin{align*}
	\p\left\{ \frac{1}{\sqrt{n}} \left|\sum_{i=1}^n x_{ij} - \E x_{ij} \right| \geq \kappa \rho_x \sqrt{\log p} \right\} \leq 2 e^{-c \min \left( \frac{\kappa^2 \rho_x^2 \log p}{C^2 \rho_x^2}, \frac{\kappa \rho_x\sqrt{\log p} \sqrt{n}}{C \rho_x} \right)}.
	\end{align*}
	
	In order to get sub-gaussian tail in Bernstein's inequality, we need 
	\begin{align*}
	\frac{\kappa^2 \log p}{C^2} \leq \frac{\kappa\sqrt{n} \sqrt{\log p}}{C},
	\end{align*}
	
	which is equivalent to $\sqrt{\log p} / \sqrt{n} \leq C / \kappa$, and holds for large enough $n$ (and any fixed value of $\kappa$) as in our asymptotic regime, $\sqrt{\log p} / \sqrt{n} \rightarrow 0$. For such large $n,p$, apply a union bound to the above inequality to get
	
	\begin{align*}
	\p\{ \frac{1}{\sqrt{n}} \max_{1 \leq j \leq p}|\sum_{i=1}^n x_{ij} - \E x_{ij}| \geq \kappa \rho_x \sqrt{\log p} \} &\leq 2p \exp\left[ -c \left( \frac{\kappa^2 \log p}{C^2} \right)  \right]\\
	&= 2 p \cdot p^{-\frac{c \kappa^2}{C^2}} \\
	&= 2 p ^{1 -\frac{c \kappa^2}{C^2} } \rightarrow 0,
	\end{align*} 
	as long as $\kappa^2 > C^2/c$, where $C, c$ are absolute constants.

\end{proof}
\section{}

\subsection{Orthogonality of $X$ and $z$}
\label{orthoproof} 
Let $\xi = \langle x_i, \tau \rangle \sim N(0,1)$. Gaussianity of $x_i$ implies that $\E[x_i \mid \xi] = \xi b$ for some (non-random) $b \in \mathbf{R}^p$. Using the definition of $z_i$ and the tower property of conditional expectations,
\begin{align*}
\E [z_i x_i] &= \E [(f_i(\xi) - \mu \xi) x_i] \\
&= \E[\E[(f_i(\xi) - \mu \xi) x_i \mid \xi, f_i]] \\
&= \E[(f_i(\xi) - \mu \xi)\E[x_i \mid \xi ]] \tag{$f_i \indep x_i$}\\
&= (\E[f_i(\xi)\xi] - \mu \E \xi^2 ) b^T\\
&= (\mu - \mu \cdot 1) b \\
&= 0.
\end{align*}

\subsection{Identifiability}\label{identifiability} In this appendix we discuss the identifiability of the parameters the model (\ref{model}). Even though the parameter $\tau$ is not identifiable (since its norm can be absorbed into the link function $f$), the parameter $\beta = \mu \tau$ is in fact identifiable when $\Sigma$ is non-singular. This follows since as shown before, $X$ and $z$ are uncorrelated, and so
\begin{align*}
\frac{1}{n}\E\|Y - X\beta'\|_2^2 &= \frac{1}{n}\E \|X\beta + z - X\beta'\|_2^2 \\
&= \frac{1}{n} \E \|X (\beta - \beta') \|_2^2 + \frac{1}{n} \E\|z\|_2^2 \\
&= \|\Sigma^{\frac{1}{2}} (\beta - \beta')\|_2^2 + \frac{1}{n}\E\|z\|_2^2.
\end{align*}
Thus when $\Sigma$ is non-singular, $\beta$ is the unique minimizer of $\E\|Y - X\beta'\|_2^2$. Since the latter only depends on the distribution of $(X,Y)$, identifiability follows.

\subsection{Departure From Elliptical Symmetry}
\label{departure}
When the design is not elliptically symmetric, $X$ and $z$ are not necessarily uncorrelated. In this case the expectation of the quadratic loss is equal to
\begin{align*}
    L(\beta') &= \E(y - \langle x, \beta' \rangle)^2 \\
    &= \E \langle x, \beta - \beta'\rangle^2 + 2 \E [\langle \beta - \beta', x \rangle (y - \langle x, \beta \rangle)] + \E(y - \langle x, \beta \rangle)^2\\
    &= (\beta' - \beta)^T \Sigma (\beta' - \beta) - 2 (\beta' - \beta)^T \E[(y - \langle x, \beta \rangle) x] + \E (y - \langle x, \beta \rangle)^2.
\end{align*}
Setting the gradient to zero we find the minimizer 
\begin{align*}
    \beta^\star = \beta + \Sigma^{-1} \E[(y - \langle x, \beta \rangle) x].
\end{align*}
Elliptical symmetry guarantees that the bias term is zero as argued in Appendix \ref{orthoproof}.

\begin{remark}
	The proofs of Proposition \ref{prop1} and Theorems \ref{thm_known_sig}, \ref{thm-unknown-sigma} only use the facts that $x_i$ is a subgaussian vector (with a uniformly bounded subgaussian constant) and that $\E[x_i \mid \langle x_i, \tau \rangle]$ is linear in $\langle x_i, \tau \rangle$. It is well-known and easy to verify that both of these conditions are satisfied for Gaussian vectors $x_i \sim N(0, \Sigma)$ when the extreme eigenvalues of $\Sigma$ are uniformly bounded away from zero and $\infty$. In particular, for any unit vector $u\in \mathbf{R}^p$ we have $\| \langle u, x_i \rangle\|_{\psi_2} \leq C \lambda_{\max}(\Sigma) < \infty$. 
The validity of these properties for elliptically symmetric random vectors has been discussed in section \ref{Elliptical-Design}. 
\end{remark}

\subsection{Proof of Proposition \ref{prop1}}

\begin{lemma}\label{RE_condition}
    Suppose that $X \in \mathbf{R}^{n \times p}$ is a random matrix with rows $x_i^T$ that are iid samples from $\mathcal{E}(0, \Sigma, F_v)$ with $\|v\|_{\psi_2} \leq \sigma_x$ for $i = 1,\dots, n$. Assume that $\Sigma^{\frac{1}{2}}$ satisfies the RE condition with parameters $(s, 9, 2\alpha)$ and that $0 < c \leq \alpha, \lambda_{\max}(\Sigma) \leq C < \infty$ for some $c,C$ not depending on $n,p$. Then, as long as $n \geq c'\sigma_x^4 s \log(p/s)$, there exist constants $c',C' > 0$ not depending on $n$ such that with probability at least $1 - \exp(-c'n^2/\sigma_x^4)$ the matrix $X$ satisfies the RE condition with parameters $(s, 3, \alpha)$.
\end{lemma}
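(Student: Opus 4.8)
The plan is to control the empirical quadratic form $\frac1n\|X\theta\|_2^2$ by its population value $\theta^\top\Sigma\theta=\|\Sigma^{1/2}\theta\|_2^2$ uniformly over the target cone, and then to feed in the population RE hypothesis. Throughout I would use that, since $v$ is subgaussian and $\lambda_{\max}(\Sigma)\le C$, the rows $x_i$ are subgaussian vectors with $\|x_i\|_{\psi_2}\lesssim\sigma_x$ (the remark in Section \ref{Elliptical-Design}); in particular each product $\langle x_i,u\rangle^2$ is sub-exponential with norm $\lesssim\sigma_x^2\|u\|_2^2$, so the tools of Appendix \ref{tailbounds} apply directly.

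\textbf{Cone geometry.} First I would record the elementary fact that any $\theta$ in the target cone $\{\|\theta_{S^c}\|_1\le 3\|\theta_S\|_1\}$ with $|S|\le s$ satisfies, by Cauchy--Schwarz,
\begin{align*}
\|\theta\|_1 \le 4\|\theta_S\|_1 \le 4\sqrt{s}\,\|\theta_S\|_2 \le 4\sqrt{s}\,\|\theta\|_2 .
\end{align*}
Two consequences will be used: (i) such a $\theta$ also lies in the larger cone $\{\|\theta_{S^c}\|_1\le 9\|\theta_S\|_1\}$, so the hypothesis yields $\|\Sigma^{1/2}\theta\|_2\ge 2\alpha\|\theta_S\|_2$; and (ii) its $\ell_1$ norm is bounded by $4\sqrt{s}\,\|\theta_S\|_2$, which is exactly the quantity that will absorb the empirical fluctuations. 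This is why the statement is allowed to assume the population RE on the wider cone while asserting the empirical RE on the narrower one.

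\textbf{Core estimate.} The key step is a one-sided, Raskutti--Wainwright--Yu--type lower bound: with the probability claimed in the statement, for \emph{every} $\theta\in\mathbf{R}^p$,
\begin{align*}
\frac{\|X\theta\|_2}{\sqrt{n}} \;\ge\; c_1\,\|\Sigma^{1/2}\theta\|_2 \;-\; c_2\,\sigma_x^2\sqrt{\tfrac{\log(p/s)}{n}}\;\|\theta\|_1 .
\end{align*}
I would prove this by restricting to $\{\theta:\|\Sigma^{1/2}\theta\|_2=1\}$, discretizing its intersection with $\ell_1$-balls through an $\varepsilon$-net whose log-cardinality over effectively $s$-sparse directions is controlled by $\log\binom{p}{s}\lesssim s\log(p/s)$, bounding $\big|\frac1n\|Xu\|_2^2-u^\top\Sigma u\big|$ at each net point by Bernstein's inequality (Proposition \ref{sub-g-e}(5)) via the sub-exponentiality of $\langle x_i,u\rangle^2$, taking a union bound, and finally passing from sparse directions to the full cone by decomposing a cone vector into successive blocks of its largest coordinates and using (ii). Equivalently one may invoke the matrix deviation inequality with the Gaussian-width bound $w\lesssim\sqrt{s\log(p/s)}$ for the relaxed cone $\{\|\theta\|_1\le 4\sqrt{s}\}\cap\mathbb{S}^{p-1}$. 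The sample-size requirement $n\gtrsim\sigma_x^4\,s\log(p/s)$ is precisely what makes the union bound beat the net cardinality and renders the fluctuation term of smaller order. Restricting the displayed bound to $\theta$ in the target cone and using (i)--(ii),
\begin{align*}
\frac{\|X\theta\|_2}{\sqrt{n}} \;\ge\; \Big(2c_1\alpha - 4c_2\,\sigma_x^2\sqrt{\tfrac{s\log(p/s)}{n}}\Big)\,\|\theta_S\|_2 ,
\end{align*}
so that, since $\alpha$ is bounded away from $0$, taking $c'$ large enough in $n\ge c'\sigma_x^4 s\log(p/s)$ forces the bracket to exceed $\alpha$, which is the claimed RE$(s,3,\alpha)$ property.

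\textbf{Main obstacle.} The delicate point is obtaining the \emph{sharp} $s\log(p/s)$ scaling while controlling the supremum over the non-compact cone. A crude entrywise bound $|\theta^\top(\tfrac1n X^\top X-\Sigma)\theta|\le\|\tfrac1n X^\top X-\Sigma\|_\infty\,\|\theta\|_1^2$ only delivers the suboptimal requirement $n\gtrsim s^2\log p$; getting down to $s\log(p/s)$ forces either the sparse-covering/chaining argument or the Gaussian-width route, and in particular the passage from genuinely sparse vectors to the full cone, where the cross terms between the successive coordinate blocks must be absorbed using $\|\theta\|_1\le 4\sqrt{s}\,\|\theta_S\|_2$, is the step most in need of care.
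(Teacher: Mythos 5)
Your proposal is correct in outline, but it takes a genuinely different route from the paper. The paper's proof is essentially a two-line verification plus a citation: it decomposes each row as $x_i^T = v_i u_i^T \Sigma^{\frac{1}{2}}$, checks that the vector $v_i u_i$ is isotropic (using the normalization $\E v^2 = p$) and subgaussian with constant $\sigma_x$, and then invokes Theorem 6 of Rudelson and Zhou (2013), which is precisely the transfer principle ``population RE $(s,9,2\alpha)$ for $\Sigma^{\frac{1}{2}}$ plus independent isotropic subgaussian rows implies empirical RE $(s,3,\alpha)$ for $X/\sqrt{n}$ once $n \gtrsim \sigma_x^4\, s\log(p/s)$.'' You instead re-prove that transfer from scratch via a Raskutti--Wainwright--Yu-style deviation bound: nets over effectively $s$-sparse directions, Bernstein's inequality for the sub-exponential variables $\langle x_i,u\rangle^2$, a union bound giving the $s\log(p/s)$ scaling, and the peeling step with $\|\theta\|_1 \le 4\sqrt{s}\,\|\theta_S\|_2$ to pass to the full cone. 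What the paper's route buys is brevity, with the constant bookkeeping delegated to the cited theorem --- including the cone-widening from $3$ to $9$, which Rudelson--Zhou's reduction principle genuinely uses but which your direct argument never needs (you apply the population RE only to vectors already in the narrower cone, with all the slack living in the eigenvalue gap $2\alpha$ versus $\alpha$). What your route buys is self-containedness and transparency about where the sample-size requirement comes from; your identification of the cone-peeling cross terms as the delicate step is accurate.

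One place where your sketch needs care is the constant $c_1$ in your core estimate. Your final display requires $2c_1\alpha$ minus the fluctuation term to exceed $\alpha$, i.e.\ $c_1 > 1/2$; the Raskutti--Wainwright--Yu bound as usually stated has $c_1 = 1/4$, which would only deliver RE with parameter below $\alpha/2$ and hence not the claimed RE $(s,3,\alpha)$. This is repairable within your framework --- the net/Bernstein argument gives two-sided multiplicative concentration $\frac{1}{n}\|Xu\|_2^2 \ge (1-\delta)\|\Sigma^{\frac{1}{2}}u\|_2^2 - (\text{fluctuation})$ for any fixed $\delta$ at the cost of enlarging $c'$, so taking $\delta$ small makes $c_1 = \sqrt{1-\delta} > 1/2$ --- but it must be said explicitly, since a black-box appeal to the published one-sided bound with its stated constants would not close the argument. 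Finally, your construction naturally yields failure probability of order $\exp(-cn/\sigma_x^4)$ rather than the $\exp(-c'n^2/\sigma_x^4)$ displayed in the lemma; the linear-in-$n$ rate is what both your argument and the cited Rudelson--Zhou theorem genuinely deliver, so this discrepancy traces to the lemma's statement rather than to a defect in your approach.
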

\begin{proof}
    By elliptical symmetry, $x_i^T$ can be decomposed as 
    \begin{align*}
        x_i^T = v_i u_i^T \Sigma^{\frac{1}{2}},
    \end{align*}
    where $v_i \indep u_i$, the random vector $u_i$ is uniformly distributed on the sphere and $\E v_i^2 = p$. It is easy to verify that for any unit vector $a \in \mathbb{S}^{p-1}$ we have
    \begin{itemize}
        \item $\E \langle a, v_i u_i \rangle^2 = 1$, and,
        \item $\| \langle a, v_i u_i \rangle \|_{\psi_2} \leq \sigma_x$.
    \end{itemize}
    In other words, the random vector $v_i u_i$ is isotropic and subgaussian with constant $\sigma_x$. Since $\Sigma^{\frac{1}{2}}$ satisfies the RE condition with parameters $(s, 9, 2\alpha)$ and $\lambda_{\max}(\Sigma) \leq C$, it follows from Theorem 6 of \cite{rudelson2013reconstruction} that for some constants $c',C'$ depending only on $c,C$ and all $n \geq c'\sigma_x^4 s \log(p/s)$, the matrix $X/\sqrt{n}$ satisfies the restricted eigenvalue condition with parameters $(s, 3, \alpha)$ with probability at least $1 - \exp(-c'n^2/\sigma_x^4)$.
\end{proof}

\begin{proof}[Proposition \ref{prop1}]
	Using the definition of $\hat{\beta}$ we can write
	\begin{align*}
	\frac{1}{2n} \| Y - X\hat{\beta}\|_2^2 + \lambda \|\hat{\beta}\|_1 &\leq \frac{1}{2n}\| Y - X\beta \|_2^2 + \lambda \|\beta\|_1 \\
	& = \frac{1}{2n}\|z\|_2^2 + \lambda \|\beta\|_1.
	\end{align*}
	Expand $\|Y - X \hat{\beta}\|_2^2$ in the above inequality and rearrange to get
	\begin{align}\label{basic-ineq}
	\frac{1}{n} \|X (\hat{\beta} - \beta) \|_2^2 \leq \frac{2}{n} z^TX(\hat{\beta} - \beta) + 2 \lambda (\|\beta\|_1 - \| \hat{\beta}\|_1).
	\end{align} 
	Using H\"older's inequality we can write
	\begin{align*}
	|z^T X(\hat{\beta} - \beta) | \leq \| z^T X \|_\infty \cdot \| \hat{\beta} - \beta \|_1.
	\end{align*}
	For any $i,j$, $z_i X_{ij}$ is a subexponential random variable with 
	\begin{align*}
	\| z_i X_{ij} \|_{\psi_1} \leq \| z_i \|_{\psi_2} \cdot \| X_{ij} \|_{\psi_2} \leq \sigma_x \sigma_z
	\end{align*}
	So for each $1 \leq j \leq p$, the variable $(z^T X)_j$ is a sum of iid, mean-zero sub-exponential random variables, and thus Bernstein's inequality implies
	\begin{align*}
	\p\{ |\sum_{i=1}^n z_i X_{ij} | \geq t \} \leq 2 \exp\left[ -c \min\left( \frac{t^2}{n \sigma_x^2 \sigma_z^2} , \frac{t}{\sigma_x \sigma_z} \right) \right],
	\end{align*}
	for an absolute constant $c > 0$. As long as $t \leq n \sigma_x \sigma_z$, the subgaussian tail bound prevails. For such $t$ and using a union bound, we get
	\begin{align*}
	\p\{ \| z^T X \|_\infty \geq t \} \leq 2p \exp\left( -c  \frac{t^2}{n \sigma_x^2 \sigma_z^2} \right).
	\end{align*}
	Setting $t = \sigma_x \sigma_z \sqrt{2 c^{-1}n \log p}$, the last inequality reads
	\begin{align*}
	\p\{ \| z^T X \|_\infty \geq t \} \leq 2 p \cdot p^{-2} = 2p^{-1},
	\end{align*}
	As long as $ \log(p)/n \leq c/2$. Choosing $\lambda \geq 2 \sigma_x \sigma_z \sqrt{2c^{-1} \log(p)/n}$, we have shown that the event
	\begin{align*}
	\mathcal{T}_1 := \left[ \frac{2}{n} \|z^T X\|_\infty \leq \lambda \right]
	\end{align*}
	has probability no less than $1 - 2p^{-1}$. On this event $\mathcal{T}_1$ we can continue with inequality (\ref{basic-ineq}) to get
	\begin{align*}
	\frac{1}{n} \|X (\hat{\beta} - \beta) \|_2^2 &\leq \lambda \|\hat{\beta} - \beta\|_1 + 2 \lambda (\|\beta\|_1 - \| \hat{\beta}\|_1).
	\end{align*}
	Let $S$ be the support of $\beta$. Adding $\lambda \|\hat{\beta} - \beta \|_1$ to both sides yields
	\begin{align}\label{ineq2}
	\frac{1}{n} \|X (\hat{\beta} - \beta) \|_2^2 + \lambda \|\hat{\beta} - \beta \|_1 &\leq 2\lambda \left( \|\hat{\beta} - \beta\|_1 + \|\beta\|_1 - \|\hat{\beta}\|_1 \right) \\
	&= 2\lambda \left( \| \hat{\beta}_S - \beta_S \|_1 + \| \hat{\beta}_{S^c} \|_1 + \| \beta_S \|_1 - \| \hat{\beta}_S \|_1 - \| \hat{\beta}_{S^c} \|_1 \right) \\
	&\leq 4\lambda \|\hat{\beta}_S - \beta_S \|_1, \label{upper-bound-lasso} \\
	\end{align}
	where in the last step we used the triangle inequality. It follows from the last inequality that
	\begin{align*}
	\hat{\beta} - \beta \in \mathcal{C}(S,3) = \{ \delta \in \mathbf{R}^p : \|\delta_{S^c}\|_1 \leq 3\|\delta_S\|_1 \}.
	\end{align*}
	Let $\mathcal{T}_2$ be the event that $X/\sqrt{n}$ satisfies the RE condition with parameters $(s,3,\alpha)$. By Lemma \ref{RE_condition}, there exist constants $c',C' > 0$ such that for all $n \geq C' \sigma_x^4 s \log(p/s)$ we have $\p(\mathcal{T}_2) \geq 1 - \exp(-c'n^2/\sigma_x^4)$. Using the Cauchy-Schwartz inequality and the RE condition on $T_1 \cap T_2$,
	\begin{align*}
	\alpha^2 \| \hat{\beta} - \beta \|_2^2 \leq \frac{1}{n} \|X(\hat{\beta} - \beta )\|_2^2 \leq 3 \lambda \| \hat{\beta}_S -\beta_S\|_1 \leq 3 \lambda \sqrt{s} \|\hat{\beta} - \beta\|_2.
	\end{align*}
	Cancel $\| \hat{\beta} - \beta \|_2$ on both sides to obtain the $\ell_2$ error bound
	\begin{align*}
		\| \hat{\beta} - \beta \|_2 \leq \frac{3\lambda \sqrt{s}}{\alpha^2}.
	\end{align*}
	To obtain the $\ell_1$ error bound, note that
	\begin{align*}
		\| \hat{\beta} - \beta \|_1 \leq 4 \| \hat{\beta}_S - \beta_S \|_1 \leq 4 \sqrt{s} \| \hat{\beta} - \beta\|_2  \leq \frac{12 \lambda s}{\alpha^2}.
	\end{align*}
	Finally, the prediction error bound is obtained fom
	\begin{align*}
		\frac{1}{n} \| X(\hat{\beta} - \beta) \|_2^2 \leq 3 \lambda \|\hat{\beta}_S - \beta_S \|_1 \leq  3 \lambda \sqrt{s} \left( \frac{3\lambda \sqrt{s}}{\alpha^2} \right) = \frac{9 \lambda^2 s}{\alpha^2}
	\end{align*}
	Note that $\p(\mathcal{T}_1 \cap \mathcal{T}_2) \geq 1 - 2p^{-2} - \exp(-c_0 n^2/\sigma_x^4)$ as long as $n \geq c_0 (1 \lor \sigma_x^4) s\log(p/s)$, where $c_0 := c' \lor C' \lor (2/c) \lor 2\sqrt{2/c}$.
\end{proof}

\begin{lemma}\label{residual_prop}
	Suppose that $x_n$ is a subgaussian vector with variance proxy $\sigma_x$. The projection $r_n$ of $x_{n,1}$ on the ortho-complement of the span of $x_{n,-1}$ satisfies
	\begin{enumerate}
		\item $\lambda_{\min}(\Sigma) \leq \E r_n^2 \leq \lambda_{\max}(\Sigma)$
		\item $\| r_n \|_{\psi_2} \leq \frac{\E r_n^2}{\lambda_{\min}(\Sigma)} \cdot \sigma_x \leq \frac{\lambda_{\max}(\Sigma)}{\lambda_{\min}(\Sigma)} \cdot \sigma_x$
	\end{enumerate}
\end{lemma}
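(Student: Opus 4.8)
The plan is to exploit the fact that the residual $r_n$ is a linear functional of $x_n$ whose coefficient vector is proportional to the first row of the precision matrix $\Sigma^{-1}$; once this is established, both claims reduce to elementary linear algebra together with the homogeneity of the sub-Gaussian norm.

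First I would write $r_n = \langle w, x_n \rangle$ with $w = (1, -\gamma^T)^T$, so that $\E r_n^2 = w^T \Sigma w$. The defining property of $\gamma$ is that $r_n$ is $L^2$-orthogonal to $x_{n,-1}$, i.e. $\E r_n x_{n,-1} = 0$, while $\E r_n x_{n,1} = \E r_n(r_n + \langle \gamma, x_{n,-1}\rangle) = \E r_n^2$. Hence $\E r_n x_n = (\E r_n^2) e_1$, where $e_1$ is the first standard basis vector. Since $\E r_n x_n = \E x_n x_n^T w = \Sigma w$, this yields the key identity $\Sigma w = (\E r_n^2)\, e_1$, equivalently $w = (\E r_n^2)\,\Sigma^{-1} e_1$. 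Reading off the first coordinate and using $w_1 = 1$ then gives $\E r_n^2 = 1/(\Sigma^{-1})_{11}$.

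For part 1, the upper bound follows from the orthogonal decomposition $x_{n,1} = r_n + \langle \gamma, x_{n,-1}\rangle$: by the Pythagorean identity $\E r_n^2 \le \E x_{n,1}^2 = \Sigma_{11} \le \lambda_{\max}(\Sigma)$. For the lower bound I would bound the Rayleigh quotient $(\Sigma^{-1})_{11} = e_1^T \Sigma^{-1} e_1 \le \lambda_{\max}(\Sigma^{-1}) = 1/\lambda_{\min}(\Sigma)$, so that $\E r_n^2 = 1/(\Sigma^{-1})_{11} \ge \lambda_{\min}(\Sigma)$. For part 2, the homogeneity of $\|\cdot\|_{\psi_2}$ together with the sub-Gaussianity of $x_n$ gives $\|r_n\|_{\psi_2} = \|\langle w, x_n\rangle\|_{\psi_2} \le \|w\|_2\,\sigma_x$. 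From the identity above, $\|w\|_2 = (\E r_n^2)\,\|\Sigma^{-1}e_1\|_2 \le (\E r_n^2)\,\lambda_{\max}(\Sigma^{-1}) = (\E r_n^2)/\lambda_{\min}(\Sigma)$, which is the first inequality in part 2; substituting $\E r_n^2 \le \lambda_{\max}(\Sigma)$ from part 1 yields the second.

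The only step requiring any genuine thought is the identity $w = (\E r_n^2)\,\Sigma^{-1} e_1$, that is, recognizing that the regression-residual vector $(1, -\gamma^T)$ is a scalar multiple of the first column of $\Sigma^{-1}$, with the scalar being exactly $\E r_n^2$; everything downstream is routine. I would also note that this argument tacitly uses $\Sigma \succ 0$ (so that $\gamma$ and $\Sigma^{-1}$ are well defined), which is in force under the eigenvalue assumptions invoked throughout.
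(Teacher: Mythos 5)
Your proposal is correct, and it turns on exactly the same key identity as the paper's proof: writing $r_n = \langle w, x_n \rangle$ with $w = (1,-\gamma^T)^T$ and deducing $\E r_n x_n = (\E r_n^2)\,\mathbf{e}_1$, i.e.\ $w^T \Sigma = (\E r_n^2)\,\mathbf{e}_1^T$. Where you diverge is in the execution. The paper never inverts $\Sigma$: it takes norms on both sides to get $\E r_n^2 = \|w^T\Sigma\|_2 \geq \lambda_{\min}(\Sigma)\,\|w\|_2$, which, combined with the trivial bound $\|w\|_2 \geq 1$ (the first coordinate of $w$ equals $1$), simultaneously yields the lower bound in part 1 and the estimate $\|w\|_2 \leq \E r_n^2/\lambda_{\min}(\Sigma)$ needed in part 2. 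You instead solve the identity as $w = (\E r_n^2)\,\Sigma^{-1}\mathbf{e}_1$, extract the exact partial-variance formula $\E r_n^2 = 1/(\Sigma^{-1})_{11}$, and bound via $(\Sigma^{-1})_{11} \leq \lambda_{\max}(\Sigma^{-1})$ and $\|\Sigma^{-1}\mathbf{e}_1\|_2 \leq \lambda_{\max}(\Sigma^{-1})$. The two routes are algebraically equivalent and each is one line past the identity; yours buys the explicit formula---which is precisely the relation $(1,-\gamma^T) = (\E r_i^2)\,\Omega_1$ that the paper invokes in its Section 2.3 discussion of node-wise lasso---at the cost of tacitly requiring $\Sigma \succ 0$, which you correctly flag. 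That cost is worth noting only because the paper remarks, in its discussion of assumptions, that the essential condition for its theorems is merely that $\E r_n^2$ be bounded away from zero, a condition compatible with singular $\Sigma$; the paper's inversion-free manipulation is the version of the argument whose steps remain meaningful in that degenerate regime. Your part 2 (homogeneity of $\|\cdot\|_{\psi_2}$ plus the sub-Gaussian vector definition) and your Pythagorean upper bound in part 1 match the paper's verbatim.
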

\begin{proof}
1. We have 
\begin{align*}
\E r_n^2 \leq \E x_{n,1}^2 = \Sigma_{11} \leq \lambda_{\max}(\Sigma),
\end{align*}
which proves the upper bound. For the lower bound, note that by definition, $r_n = (1, -\gamma^T) x_n$, and that $\E r_n x_n^T = (\E r_n^2) \mathbf{e}_1^T$, where $\mathbf{e}_k$ is that $k$-th standard basis vector in $\mathbf{R}^p$.  From the last equality and $\Sigma = \E x_n x_n^T$, we get
\begin{align*}
\E r_n^2 &= \| (\E r_n^2) \mathbf{e}_1^T \|_2 \\
&=\|(1, -\gamma^T) \cdot \Sigma \|_2 \\
&\geq \|(1, -\gamma^T) \|_2 \cdot \lambda_{\min}(\Sigma) \tag{$\star$}\\
&\geq \lambda_{\min}(\Sigma),
\end{align*}	
proving the lower bound.

2. From the definition of subgaussian vectors and the inequality $(\star)$ in the proof of part (1) we have
\begin{align*}
\|r_n\|_{\psi_2} &= \|(1, -\gamma^T)\|_2 \cdot \left\| \frac{(1, -\gamma^T) x_n}{\|(1, -\gamma^T)\|_2} \right\|_{\psi_2}\\
&\leq \|(1, -\gamma^T)\|_2 \cdot \sigma_x\\
&\leq \frac{\E r_n^2}{\lambda_{\min}(\Sigma)} \sigma_x,
\end{align*}
where we used $(\star)$ in the last inequality. Using the upper bound $\E r_n^2 \leq \lambda_{\max}(\Sigma)$ obtained in part (1) completes the proof.
\end{proof}

\subsection{Proof of Theorem \ref{thm_known_sig}}
\begin{proof} Without loss of generality, assume that $k = 1$. Use the representation $y_i = \langle x_i, \beta \rangle + z_i$ to rewrite $\tilde{\beta}_1$ as
	\begin{align*}
	\tilde{\beta}_1 &= \hat{\beta}_1 + \frac{\sum_{i=1}^n r_i z_i + r_i \langle x_i, \beta - \hat{\beta} \rangle}{\sum_{i=1}^n r_i x_{i,1}} \\
	&=\beta_1 + \frac{\sum_{i=1}^n r_i z_i + r_i \langle x_{i,-1}, \beta_{-1} - \hat{\beta}_{-1} \rangle}{\sum_{i=1}^n r_i x_{i,1}}\\
	\end{align*}
	Subtracting $\beta_1$ from both sides and multiplying by $\sqrt{n}$ yields
	\begin{align*}
	\sqrt{n}  (\tilde{\beta}_1 - \beta_1) = \frac{\overbrace{ \frac{1}{\sqrt{n}} \sum_{i=1}^n r_i z_i}^{B} + \overbrace{ \frac{1}{\sqrt{n}} \sum_{i=1}^n r_i\langle x_{i,-1}, \beta_{-1} - \hat{\beta}_{-1} \rangle }^{C}}{\underbrace{\frac{1}{n} \sum_{i=1}^n r_i x_{i,1}}_{A}}.
	\end{align*}
	
	We start by showing $A / \E r_n^2 \rightarrow_p 1$. That $\E A/\E r_n^2 = 1$ follows from the definition of $r_n$. By the second part of Lemma \ref{residual_prop}, $\|r_i / \E r_n^2 \|_{\psi_2} \leq 1/\lambda_{\min}(\Sigma)$. Also, we have by assumption that $x_n$ is subgaussian with variance proxy $\sigma_x$, implying that $\|x_{i,1}\|_{\psi_2} \leq \sigma_x$. Thus $(r_i x_{i,1})/\E r_n^2$ is subexponential with 
	\begin{align*}
	\left\| \frac{r_i x_{i,1}}{\E r_n^2} \right\|_{\psi_1} &\leq	\left\| \frac{r_i}{\E r_n^2}\right\|_{\psi_2} \cdot \left\| x_{i,1} \right\|_{\psi_2}\\
	&\leq \frac{\sigma_x^2}{\lambda_{\min}(\Sigma)},
	\end{align*}
	which is uniformly (in $n$) bounded above by assumption. Bernstein's inequality now implies that $\E A/\E r_n^2 \rightarrow_p 1$.

Next we bound $C$. Conditioning on the second subsample $\mathcal{S}_2=(x_i,y_i)_{i=n+1}^{2n}$, $\Delta:= \hat{\beta} - \beta$ becomes a deterministic vector.Thus conditionally we have

\begin{align*}
\left \| \frac{r_i}{\E r_n^2}  \langle x_{i,-1} , \frac{\Delta}{\|\Delta\|_2} \rangle \right\|_{\psi_1} &\leq \left \| \frac{r_i}{\E r_n^2}  \right\|_{\psi_2} \cdot \left \|  \langle x_{i,-1} , \frac{\Delta}{\|\Delta\|_2} \rangle \right\|_{\psi_2}\\
&\leq \frac{\sigma_x^2}{\lambda_{\min}(\Sigma)}.
\end{align*}
From Bernstein's inequality (Proposition \ref{sub-g-e}),
\begin{align*}
\p_{|\mathcal{S}_2} \left(   \left| \frac{1}{\sqrt{n}} \sum_{i=1}^n   \frac{r_i}{\E r_n^2}  \langle x_{i,-1} , \frac{\Delta}{\|\Delta\|_2} \rangle \right| > \frac{\sigma_x^2 \sqrt{\log(p) }}{c\lambda_{\min}(\Sigma) }  \right) & \leq 2 \exp\left[ - \min \left( \log(p), \sqrt{n\log(p)}  \right) \right] \\
&\leq \frac{2}{p},
\end{align*}
where the last inequality follows since by assumption $\log(p) = o(n)$, so that the subgaussian tail bound prevails for large enough $n,p$. Since the RHS does not depend on the second subsample (or $\Delta$), the same bound is also valid for the unconditional distribution:
\begin{align*}
\p \left(   \left| \frac{1}{\sqrt{n}} \sum_{i=1}^n   \frac{r_i}{\E r_n^2}  \langle x_{i,-1} , \hat{\beta} - \beta \rangle \right| > \frac{\sigma_x^2 \sqrt{\log(p) }}{c\lambda_{\min}(\Sigma) } \| \hat{\beta} - \beta \|_2  \right)  \leq \frac{2}{p}.
\end{align*}
Using the $\ell_2$ error bound of the Lasso estimator $\hat{\beta}$, namely $\| \hat{\beta} - \beta \|_2 \lesssim \frac{\sigma_x \sigma_z \sqrt{s \log(p)}} {\lambda_{\min}(\Sigma) \sqrt{n}}$ with probability $1 - o(1)$, we obtain
\begin{align*}
 \left| \frac{1}{\sqrt{n}} \sum_{i=1}^n   \frac{r_i}{\E r_n^2}  \langle x_{i,-1} , \hat{\beta} - \beta \rangle \right| \lesssim  \frac{\sigma_x^3 \sigma_z }{c \lambda_{\min}^2(\Sigma) } \left( \log(p)  \sqrt{\frac{s}{n}} \right), \quad \text{ with probability } 1 - o(1).
\end{align*}
Given that the multiplying constants are bounded away from $0$ and $\infty$, the sparsity assumption $s = o(n / \log^2(p))$ implies that $C / \E r_n^2 \rightarrow_p 0$.

	Finally, we show that for $\nu^2 := \E r_n^2 z_n^2 / \E r_n^2$, the term $B/\nu$ converges to $N(0,1)$ in distribution. In light of the Lyapunov condition for the central limit theorem, it is sufficient to show that $|r_n z_n|$ has a finite and bounded $(2+\delta)$-th moment for some $\delta > 0$. Let us argue that this follows from $\E |y_n|^{2 + \alpha} < M < \infty$. 
	
	For $q\geq1$, denote the $L^q$ norm of random variables by $\|\cdot \|_{L^q} := \sqrt[q]{\E|\cdot|^q}$. Let $q = 2 + \alpha$ and use the triangle inequality to write
	\begin{align*}
	\| z_n \|_{L^q} &= \| y_n - \mu \langle x_n, \tau \rangle \|_{L^q}\\
	&\leq \| y_n \|_{L^q} + | \mu | \cdot \| \langle x_n, \tau \rangle \|_{L^q}.
	\end{align*}
	
	By assymption, $\| y_n \|_{L^q} \leq M^{1/q}$. Using the Cauchy-Shwartz inequality, 
	\begin{align*}
	|\mu| &= |\E y_n \langle x_n, \tau \rangle | \\
	&\leq \sqrt{\E|y_n|^2} \cdot \sqrt{\E \langle x_n, \tau \rangle^2 }\\
	&\leq \sqrt[q]{M}, 
	\end{align*}
	where the last inequality uses the normalization of $\tau$ and the fact that $\|y_n\|_2 \leq \|y_n\|_q$ for $q = 2+\alpha \geq 2$. Next, note that $\langle x_n, \tau \rangle$ is a subgaussian random variable with $\| \langle x_n, \tau \rangle \|_{\psi_2} \leq \sigma_x \cdot \|\tau\|_2$. By the properties of subgaussian random variables \cite[Proposition 2.5.2]{vershynin2018high}, the $L^q$ norms of subgaussian random variables are bounded by their $\psi_2$ norms, so we have
	\begin{align*}
		\| \langle x_n, \tau \rangle \|_{L^q} \leq c \sigma_x \cdot \|\tau\|_2 \cdot \sqrt{q},
	\end{align*}
	for an absolute constant $c > 0$. Noting that $\|\Sigma^{1/2}\tau\|_2 = 1$ implies $\|\tau \|_2 \leq 1/\sqrt{\lambda_{\min}(\Sigma)}$, we obtain
\begin{align}
\|z_n\|_{L^q} \leq M^{1/q} (1+ c \sigma_x  \sqrt{q/\lambda_{\min}(\Sigma)}) \label{boundz},
\end{align}
\end{proof}
proving that $\|z_n\|_{L^q}$ is bounded (uniformly in $n$) away from infinity. Lemma \ref{subexp-cor} shows that $\|r_n\|_{\psi_2} \leq \sigma_x\lambda_{\max}(\Sigma)/\lambda_{\min}(\Sigma)$. Using H\"older's inequality and the bound on the moments of subgaussian random variables, for $q' = 2 + \alpha/2 < q$, 
\begin{align*}
\E|r_n z_n|^{q'} &\leq \| |z_n|^{q'} \|_{L^{q/q'}} \cdot \| |r_n|^{q'} \|_{L^{q/(q-q')}}\\
&\leq \| z_n \|_{L^q}^{q'}  \cdot \| r_n\|_{L^{q''}}^{q'} \tag{$q'' = (q - q')/(qq')$}\\
& \leq M^{q'/q} \left(1+ c \sigma_x  \sqrt{\frac{q}{\lambda_{\min}(\Sigma)}}\right)^{q'}  \cdot
 \left(c \sigma_x \frac{\lambda_{\max}(\Sigma)}{\lambda_{\min}(\Sigma)} \sqrt{q''}\right)^{q'}\\
&\leq c'
\end{align*}
for some $c' < \infty$ that does not depend on $n$. It follows that for $q' = 2 + \alpha/2$ the Lyapunov condition is satisfied:
\begin{align*}
\frac{n\E|r_n z_n|^{2+\alpha/2}}{(\sqrt{n\E r_n^2 z_n^2 } )^{2+\alpha/2}} \leq \frac{c'}{n^{\alpha/4}c_{\text{rz}}} \rightarrow 0,
\end{align*}
and the proof is complete.
\subsection{Proof of Theorem \ref{thm-unknown-sigma}}
\begin{proof}
First note that the assumptions of Theorem \ref{thm-unknown-sigma} include the asusmptions of Proposition \ref{prop1} proving $\| \hat{\beta} - \beta \|_1 \lesssim s \sqrt{\log(p) / n}$ with high probability.
	Let $R, \hat{R} \in \mathbf{R}^p$ be the vectors with $r_i / (n \E r_n^2) $ and $\hat{r}_i / \sum_k \hat{r}_k x_{k,1}$  in their $i$-th poisition respectively. Using $y_i = \langle x_i, \beta \rangle + z_i$ in the the definition of $\tilde{\beta}_1$ and rearranging yields
\begin{align*}
	\tilde{\beta}_1 &= \hat{\beta}_1 + \hat{R}^T( y - X \hat{\beta})\\
&= \tilde{\beta}_1 + \hat{R}^TX (\beta - \hat{\beta}) + \hat{R}^T z \\
&= \beta_1 + (R^T X - \e_1^T) (\beta - \hat{\beta}) + R^T z + (\hat{R}  - R)^T X(\beta - \hat{\beta} )+ (\hat{R} - R)^T z
	\end{align*}
	Subtracting $\beta_1$ from both sides and multiplying by $\sqrt{n}$,
	\begin{align}
	\sqrt{n} (\tilde{\beta} - \beta_1) = A +  B + C
	\end{align}  
	where 
\begin{align*}
A &= \sqrt{n} \left( (R^T X - \e_1^T) (\beta - \hat{\beta}) + R^T z \right) \\
B &= \sqrt{n} (\hat{R}  - R)^T X(\beta - \hat{\beta} ) \\
C &= \sqrt{n} (\hat{R} - R)^T z.
\end{align*}
As in the case of known $\Sigma$, it can be shown that $\sqrt{n} R^T z / \nu \rightarrow_d N(0,1)$. Also as in the case of known $\Sigma$, using the subexponential property of $ R^T X - \mathrm{e}_1^T$ we can show that $\sqrt{n} \| R^T X - \mathrm{e}_1^T \|_\infty = \mathcal{O}_p( \sqrt{\log(p)})$. Thus using an $\ell_1 - \ell_\infty$ bound it follows that $| (R^T X - \e_1^T) (\beta - \hat{\beta})  | \leq \| R^T X - \e_1^T \|_\infty \cdot \| \beta- \hat{\beta} \|_1 = \mathcal{O}_p(s \log(p) / \sqrt{n} )$ which is negligible by assumption. So it suffices to show that $B,C \rightarrow_p 0$.

By the definition of $R$ and $\hat{R}$,
\begin{align*}
|B| &= \left| \sqrt{n} \left( \frac{n}{\sum_i r_i x_{i,1}} (1 , -\hat{\gamma}^T) - \frac{1}{\E r_n^2} (1, -\gamma^T) \right) \hat{\Sigma} (\beta - \hat{\beta}) \right|\\
&\leq \sqrt{n} \left\| \frac{n}{\sum_i r_i x_{i,1}} (1 , -\hat{\gamma}) - \frac{1}{\E r_n^2} (1, -\gamma) \right\|_1 \cdot \left\| \hat{\Sigma} (\beta - \hat{\beta}) \right\|_\infty.
\end{align*}
For the first term we can use the triangle inequality to write
\begin{align*}
 \left\| \frac{n}{\sum_i r_i x_{i,1}} (1 , -\hat{\gamma}) - \frac{1}{\E r_n^2} (1, -\gamma) \right\|_1 &\leq \frac{n}{| \sum_{i} r_i x_{i1} |} \| \hat{\gamma} - \gamma \|_1 \\
  &+ \left| \frac{1}{\E[r_n^2]}   - \frac{n}{ \sum_{i} r_i x_{i1} } \right| \cdot (1 + \| \gamma\|_1 )\\
 &= \mathcal{O}_p\left( s\sqrt{\frac{\log(p)}{n}} + \sqrt{\frac{s}{n}}  \right) = \mathcal{O}_p\left( s\sqrt{\frac{\log(p)}{n}}  \right).
  \end{align*}
where the first equality follows because $\E[r_n^2] \geq \lambda_{\max}(\Sigma)$ is bounded away from zero and $\sum_i r_i x_{i1}  / n - \E[r_n^2] = \mathcal{O}_p(1/\sqrt{n})$, and $\| \gamma \|_1 = \mathcal{O}( \sqrt{s})$, since we have $\| \gamma\|_1 \leq \sqrt{s} \| \gamma\|_2$ and furthermore $(1, -\gamma^T) \Sigma = \E[r_n^2] \mathrm{e}_1^T$ so that $\|\gamma\|_2 \leq \E[r_n^2] / \lambda_{\min}(\Sigma) \leq \lambda_{\max}(\Sigma)/\lambda_{\min}(\Sigma)$.

For the second term, use the KKT conditions on the definition of $\hat{\beta}$ to obtain a vector $u$ in the subgradient of $\| \hat{\beta}\|_1$ that satisfies
\begin{align*}
-\frac{1}{n} X^T ( Y - X \hat{\beta}) + \lambda u = 0.
\end{align*}
Rearranging and using $\| u \|_{\infty} \leq 1$ gives for $\lambda \gtrsim \sigma_x \sigma_z \sqrt{\log(p)/n}$,
\begin{align*}
\| \hat{\Sigma} (\beta - \hat{\beta} ) \|_{\infty} \leq \frac{1}{n} \|X^T z \|_{\infty} + \lambda.
\end{align*}
In the proof of Proposition \ref{pilot-estimate} it is proved\footnote{Note that the assumptions of Proposition \ref{pilot-estimate} are included in the assumptions of Theorem \ref{thm-unknown-sigma}.}
 that $\frac{1}{n} \|X^T z \|_\infty = \mathcal{O}_p(\lambda)$. Thus we have $\| \hat{\Sigma} (\beta - \hat{\beta}) \|_\infty = \mathcal{O}_p (\lambda)$.
Combining these bounds gives
\begin{align*}
|B| &= \mathcal{O}_p \left( \sqrt{n} \cdot s \frac{\log(p)}{\sqrt n}  \cdot  \sigma_x \sigma_z \sqrt{\frac{\log(p)}{n}} \right)\\
&= \mathcal{O}_p \left( s\sqrt{ \frac{\log(p)}{n} }\right).
\end{align*}
The assumption $s =o( \sqrt{ n}/ \log(p))$ now implies that $B \rightarrow_p 0$.

Finally, $C$ can be bounded by
\begin{align*}
|C| &= \left| \left( \frac{n}{\sum_i r_i x_{i,1}} (1 , -\hat{\gamma}^T) - \frac{1}{\E r_n^2} (1, -\gamma^T) \right) \frac{1}{\sqrt n}X^T z \right| \\
& \leq \left\|  \frac{n}{\sum_i r_i x_{i,1}} (1 , -\hat{\gamma}^T) - \frac{1}{\E r_n^2} (1, -\gamma^T)  \right\|_1 \cdot \left\| \frac{1}{ \sqrt n} X^T z \right\|_\infty.
\end{align*}
The first term we have already shown to be $\mathcal{O}_p\left( s \sqrt{\log(p)/n} \right)$.  
The second term $\left\| \frac{1}{ \sqrt n} X^T z \right\|_\infty$ is $\mathcal{O}_p (\sqrt{n} \lambda ) = \mathcal{O}_p(\sqrt{\log(p)} )$ as established in the proof of Proposition \ref{prop1}. Thus we obtain
\begin{align*}
|C| = \mathcal{O}_p \left( s \frac{\log(p)}{\sqrt{n}} \right),
\end{align*}
which implies $C \rightarrow_p 0$ by the assumption $s = o \left(\frac{ \sqrt{n}}{\log(p)} \right)$.

\end{proof}

\subsection{Proof of Theorem \ref{thm-efficient}}

Some well-known properties of Hermite polynomials are collected in the following proposition. Definitions and proofs of the first two statements can be found in Section 11.2 of  \cite{o2014analysis}. Statements 2 and 3 are easy to verify from the definition. The last statement is proved in Theorem 2.1 of \cite{larsson-cohn2002}.

\begin{prop}\label{hermite-prop}
	For Hermite polynomials $\{h_j\}_{j=0}^\infty$ defined by (\ref{hermite_def}) the following are true:
	\begin{enumerate}
		\item $\{h_j\}_{j=0}^\infty$ forms an orthonormal basis of $L^2(\mathbf{R}, N(0,1))$.
		
		\item For (deterministic) unit vectors $\tau,\hat{\tau}\in \mathbf{R}^p$ and $x \sim N(0,I_p)$ we have 
		\begin{align}
		\E [h_j(\langle \tau, x \rangle) h_k(\langle \hat{\tau}, x \rangle)] = \langle \tau, \hat{\tau} \rangle^j \cdot \mathbf{1}[j = k].
		\end{align}
		
		\item For all $j \geq 1$, $h_j' = \sqrt{j} h_{j-1}$.
		\item For all $j \geq 1$, $\xi  h_j(\xi) = \sqrt{j+1} h_{j+1} + \sqrt{j} h_{j-1}(\xi)$
		\item For $q > 2$ the $L^q$ norms (w.r.t. the Gaussian measure) of Hermite polynomials satisfy
		\begin{align*}
		\|h_j\|_{L_q} = \frac{c(q)}{j^{1/4}} (q-1)^{j/2} \left( 1 + \mathcal{O}(\frac{1}{j}) \right),
		\end{align*}
		as $j \rightarrow \infty$, where $c(q) = (1/\pi)^{1/4} ((q-1)/(2q-4))^{(q-1)/(2q)}.$
	\end{enumerate}  
\end{prop}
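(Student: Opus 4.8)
The plan is to derive statements 1--4 from a single tool, the exponential generating function, and to reduce statement 5 to a known sharp asymptotic. The central object is
\begin{align*}
G(s,\xi) := \sum_{j=0}^\infty \frac{s^j}{\sqrt{j!}}\, h_j(\xi) = e^{s\xi - s^2/2},
\end{align*}
which I would establish directly from (\ref{hermite_def}) by writing $\sqrt{j!}\,h_j(\xi) = (-1)^j e^{\xi^2/2}\, (d^j/d\xi^j) e^{-\xi^2/2}$ and recognizing these as the $s$-Taylor coefficients of $e^{\xi^2/2} e^{-(\xi - s)^2/2} = e^{s\xi - s^2/2}$. Statements 3 and 4 then fall out by matching powers of $s$: the identity $\partial_\xi G = sG$ gives $h_j' = \sqrt{j}\, h_{j-1}$, while $\partial_s G = (\xi - s) G$ gives the three-term recurrence $\xi h_j = \sqrt{j+1}\, h_{j+1} + \sqrt{j}\, h_{j-1}$.

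For statement 1, I would prove orthonormality by integration by parts. Writing $\varphi$ for the standard normal density, the Rodrigues formula yields $h_k(\xi)\varphi(\xi) = (-1)^k (k!)^{-1/2} \varphi^{(k)}(\xi)$, so for $k \le j$,
\begin{align*}
\E[h_j h_k] = \frac{(-1)^k}{\sqrt{k!}} \int h_j(\xi)\, \varphi^{(k)}(\xi)\, d\xi = \frac{1}{\sqrt{k!}} \int h_j^{(k)}(\xi)\, \varphi(\xi)\, d\xi,
\end{align*}
the boundary terms vanishing by the rapid decay of $\varphi$ and its derivatives. Since $\deg h_j = j$, this is $0$ when $k < j$ and equals $1$ when $k = j$ (using statement 3 to see $h_j^{(j)} = \sqrt{j!}$ and $h_0 = 1$); the case $k > j$ follows by symmetry. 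Completeness follows from density of polynomials in $L^2(\mathbf{R}, N(0,1))$: if $f$ is orthogonal to every monomial, then $t \mapsto \int f(\xi) e^{t\xi} \varphi(\xi)\, d\xi$ is entire with all Taylor coefficients (the moments of $f\varphi$) equal to zero, forcing the Fourier transform of $f\varphi$ to vanish and hence $f = 0$ almost everywhere. Since $\{h_j\}$ is, up to sign, the Gram--Schmidt orthonormalization of the monomials, it therefore forms an orthonormal basis.

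For statement 2, note that $(U, V) := (\langle \tau, x\rangle, \langle \hat\tau, x\rangle)$ is bivariate Gaussian with unit variances and correlation $\rho = \langle \tau, \hat\tau\rangle$. I would compute the bivariate generating function using the joint moment generating function $\E[e^{sU + tV}] = e^{(s^2 + 2st\rho + t^2)/2}$, which gives
\begin{align*}
\E[G(s, U)\, G(t, V)] = e^{(s^2 + 2st\rho + t^2)/2 - s^2/2 - t^2/2} = e^{st\rho}.
\end{align*}
Expanding the left side as $\sum_{j,k} s^j t^k (j!\, k!)^{-1/2} \E[h_j(U) h_k(V)]$ and the right side as $\sum_n (st\rho)^n / n!$, and matching the coefficient of $s^j t^k$, yields $\E[h_j(U) h_k(V)] = \rho^j\, \mathbf{1}[j = k]$, which is the claim.

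The main obstacle is statement 5, the sharp $L^q$ asymptotics, which lies genuinely deeper than the elementary generating-function algebra above. Here I would appeal to the Plancherel--Rotach uniform asymptotics for $h_j$ on the real line and apply Laplace's method (steepest descent) to $\|h_j\|_{L_q}^q = \int |h_j(\xi)|^q \varphi(\xi)\, d\xi$; for $q > 2$ the integral concentrates at a saddle point determined by $q$, and a careful evaluation produces the exponential factor $(q-1)^{jq/2}$, the algebraic factor $j^{-q/4}$, and the constant $c(q)$. Rather than reproduce this delicate computation, I would cite Theorem 2.1 of \cite{larsson-cohn2002}, where it is carried out in full.
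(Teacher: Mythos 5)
Your proposal is correct in substance, and it is genuinely more self-contained than what the paper does: the paper supplies no proof of this proposition at all, delegating statements 1 and 2 to \citep[\S 11.2]{o2014analysis}, declaring the recurrences ``easy to verify from the definition,'' and citing \citep[Theorem 2.1]{larsson-cohn2002} for the $L^q$ asymptotics. Your generating-function route does real work in place of the first three citations: the identity $G(s,\xi)=\sum_j s^j h_j(\xi)/\sqrt{j!}=e^{s\xi-s^2/2}$ follows exactly as you indicate from the Rodrigues form of (\ref{hermite_def}), coefficient matching in $\partial_\xi G=sG$ and $\partial_s G=(\xi-s)G$ yields statements 3 and 4, and the computation $\E[G(s,U)G(t,V)]=e^{st\rho}$ for the bivariate Gaussian pair $(U,V)=(\langle\tau,x\rangle,\langle\hat\tau,x\rangle)$ with $\rho=\langle\tau,\hat\tau\rangle$ gives statement 2 in one stroke; the only step you gloss is the interchange of expectation and double sum, which is licensed by Fubini since $\E|h_j(U)h_k(V)|\le 1$ by Cauchy--Schwarz and orthonormality, so the double series converges absolutely. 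The completeness argument via vanishing moments and the Fourier transform of $f\varphi$ is the standard one and is sound. For statement 5 you cite exactly the paper's source, which is appropriate: the Plancherel--Rotach/Laplace analysis behind the constant $c(q)$ is not something to reconstruct here.

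One local error in your orthonormality argument needs fixing. You apply the Rodrigues identity $h_k\varphi=(-1)^k(k!)^{-1/2}\varphi^{(k)}$ to the \emph{smaller} index $k\le j$, integrate by parts, and arrive at $(k!)^{-1/2}\int h_j^{(k)}\varphi\,d\xi$, claiming this vanishes for $k<j$ ``since $\deg h_j=j$.'' That inference is invalid: $h_j^{(k)}$ is then a polynomial of positive degree $j-k$, and such polynomials need not integrate to zero against $\varphi$ (e.g.\ $\int \xi^2\varphi\,d\xi=1$). Two one-line repairs: (i) apply Rodrigues to the \emph{larger}-index factor instead, so that $\E[h_jh_k]=(j!)^{-1/2}\int h_k^{(j)}\varphi\,d\xi$, which is $0$ for $k<j$ because $h_k^{(j)}\equiv 0$ by degree, and equals $(j!)^{-1/2}h_j^{(j)}=1$ for $k=j$ since $h_j$ has leading coefficient $1/\sqrt{j!}$; or (ii) keep your version, use statement 3 iteratively to write $h_j^{(k)}=\sqrt{j!/(j-k)!}\,h_{j-k}$, and note that $\E h_m(\xi)=(-1)^m(m!)^{-1/2}\int\varphi^{(m)}(\xi)\,d\xi=0$ for $m\ge 1$, as the integral is a pure boundary term. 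With either patch the proof is complete.
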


The following lemma relates the smoothness of a function $g =\sum_{j=0}^\infty \mu_j h_j$ to the decay of the sequence $\{\mu_j\}_{j}$. The result and its proof are direct analogues of Lemma A.3 in the work of \cite{Tsybakov:2008:INE:1522486} which concerns the trigonometric basis.

\begin{lemma}\label{decay}
	Suppose that $g(\xi) = \sum_{j=0}^\infty \mu_j h_j(\xi)$. Assume also that $g$ is $k$-times continuously differentiable and that 
	\begin{align*}
	\underset{\xi \sim N(0,1)}{\E}|g^{(k)}(\xi)|^2 \leq L^2.
	\end{align*}
	Then we have
	\begin{align*}
	\sum_{j=0}^\infty \frac{(j+k)!}{j!} \mu_{j+k}^2 \leq L^2.
	\end{align*}
	
\end{lemma}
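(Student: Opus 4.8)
The plan is to identify the Hermite coefficients of $g^{(k)}$ in terms of those of $g$ and then finish with Bessel's inequality. Write $\varphi$ for the standard normal density and $\langle u, w \rangle := \E[u(\xi)w(\xi)] = \int u\, w\, \varphi$ for the inner product on $L^2(\mathbf{R}, N(0,1))$, so that $\mu_j = \langle g, h_j \rangle$. The object of interest is $\theta_i := \langle g^{(k)}, h_i \rangle$. The heart of the argument is to transfer the $k$ derivatives off $g$ and onto $h_i$ by integration by parts against the Gaussian weight; the adjoint of $d/d\xi$ in this inner product acts on the Hermite basis as a simple raising operator, and iterating it is exactly what produces the factorial factor $(i+k)!/i!$.

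Concretely, since $\varphi'(\xi) = -\xi\,\varphi(\xi)$, a single integration by parts gives $\int u'(\xi) w(\xi)\varphi(\xi)\,d\xi = \int u(\xi)\,(Aw)(\xi)\,\varphi(\xi)\,d\xi$, where $Aw := \xi w - w'$ and the boundary terms are (for now) assumed to vanish. Combining Property 3 ($h_j' = \sqrt{j}\,h_{j-1}$) and Property 4 ($\xi h_j = \sqrt{j+1}\,h_{j+1} + \sqrt{j}\,h_{j-1}$) of Proposition \ref{hermite-prop} yields $A h_j = \xi h_j - h_j' = \sqrt{j+1}\,h_{j+1}$, so $A$ is precisely the raising operator on the Hermite basis. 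Iterating $k$ times gives $A^k h_i = \sqrt{(i+k)!/i!}\,h_{i+k}$, and applying the integration-by-parts identity $k$ times I would obtain
\begin{align*}
\theta_i = \langle g^{(k)}, h_i \rangle = \langle g, A^k h_i \rangle = \sqrt{\frac{(i+k)!}{i!}}\,\langle g, h_{i+k} \rangle = \sqrt{\frac{(i+k)!}{i!}}\,\mu_{i+k}.
\end{align*}
Since $\{h_i\}$ is orthonormal (Property 1) and $g^{(k)} \in L^2(\mathbf{R}, N(0,1))$ by hypothesis, Bessel's inequality gives $\sum_{i \geq 0} \theta_i^2 \leq \|g^{(k)}\|_{L^2}^2 \leq L^2$, which is exactly $\sum_{i \geq 0} \frac{(i+k)!}{i!}\,\mu_{i+k}^2 \leq L^2$ after renaming $i$ as $j$.

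The step requiring care — and the main obstacle — is justifying the repeated integration by parts, i.e. verifying that the boundary terms $g^{(m)}(\xi)\,(A^{k-1-m} h_i)(\xi)\,\varphi(\xi)$ vanish as $\xi \to \pm\infty$ for each intermediate $m = 0, \dots, k-1$. Because $\varphi$ decays faster than any polynomial while each $A^{\ell} h_i$ is a polynomial, this reduces to controlling the growth of $g$ and its lower-order derivatives relative to $e^{\xi^2/2}$. The hypotheses that $g \in L^2(\mathbf{R}, N(0,1))$ (implicit in the expansion $g = \sum_j \mu_j h_j$) together with $g^{(k)} \in L^2(\mathbf{R}, N(0,1))$ supply this control, but to make the argument fully rigorous I would either invoke a density/approximation argument — approximating $g$ by functions for which the boundary terms manifestly vanish, e.g.\ Hermite partial sums or smooth truncations, and passing to the limit using the $L^2$ bounds — or bound the intermediate derivatives directly via Gaussian integration-by-parts estimates. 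This integration-by-parts route is preferable to differentiating the series $g = \sum_j \mu_j h_j$ term by term, since the latter would require knowing $\sum_j \frac{j!}{(j-k)!}\mu_j^2 < \infty$ in advance, which is precisely the conclusion we are trying to establish.
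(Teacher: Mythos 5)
Your proposal is correct and follows essentially the same route as the paper's proof: your raising operator $A$ with $Ah_j = \sqrt{j+1}\,h_{j+1}$ is exactly the paper's one-step integration-by-parts recursion $\mu_j(k) = \sqrt{j+1}\,\mu_{j+1}(k-1)$ built from $h_j' = \sqrt{j}\,h_{j-1}$ and $\xi h_j = \sqrt{j+1}\,h_{j+1} + \sqrt{j}\,h_{j-1}$, iterated $k$ times and finished with Bessel/Parseval. Your explicit attention to the vanishing boundary terms is a point of extra rigor — the paper simply asserts they are zero — but it does not change the structure of the argument.
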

\begin{proof}
	Let $\mu_j(k) := \E g^{(k)}(\xi) h_j(\xi)$ for $k \geq 1$ and $\mu_j(0) = \mu_j$. Using integration by parts, for $k\geq 1$ we have
	\begin{align*}
	\mu_j(k) &= \frac{1}{\sqrt{2\pi}} \int_{-\infty}^{+\infty} g^{(k)}(\xi) h_j(\xi) e^{-\xi^2/2} d\xi\\
	&=\frac{1}{\sqrt{2\pi}} \left[ g^{(k-1)}(\xi) h_j(\xi)e^{-\xi^2/2} \right]_{-\infty}^{+\infty} -\frac{1}{\sqrt{2\pi}} \int_{-\infty}^{\infty}  g^{(k-1)}(\xi)(\sqrt{j}h_{j-1}(\xi) - \xi h_j(\xi)) e^{-\xi^2/2} d\xi\\
	&= 0 - \sqrt{j+1} \E g^{(k-1)}(\xi) h_{j+1}(\xi)\\
	&= \sqrt{j+1} \mu_{j+1}(k-1)
	\end{align*} 
	where in the second and third equalities we used the parts (3) and (4) of Proposition \ref{hermite-prop}.
	From the recursion $\mu_j(k) = \sqrt{j+1}\mu_{j+1}(k-1)$ it follows that $\mu_j^2(k) = \frac{(j+k)!}{j!} \mu_{j+k}^2$. Using the latter and Parseval's identity.
	\begin{align*}
	\sum_{j=0}^\infty \frac{(j+k)!}{j!} \mu_{j+k}^2 = \sum_{j=0}^\infty \mu_j(k)^2 = \underset{\xi \sim N(0,1)}{\E}|g^{(k)}(\xi)|^2 \leq L^2.
	\end{align*}
\end{proof}

Before we present the proof of Theorem \ref{thm-efficient}, we state and prove a fact involving Gaussian decompositions of random variables that will be used several times in the proof. 
\begin{lemma}
Let $\tilde{\omega}^T = (1, -\gamma^T)$ so that $r_n = \langle x_n, \tilde{\omega} \rangle$. One can write $r_n = \alpha \langle x_n, \tau \rangle + \hat{\alpha} \langle x_n, \hat{\tau} \rangle + u$ such that conditional on $\hat{\tau}$, the random variable $u$ is independent of $\langle x_n, \tau \rangle$ and $\langle x_n, \hat{\tau} \rangle$. Furthermore, $\alpha$ and $\hat{\alpha}$ are both $\mathcal{O}_p(1)$.
\end{lemma}
\begin{proof}
Without loss of generality assume that $\Sigma = I$. Then $\| \tau \|_2 = \| \hat{\tau} \|_2 = 1$. The conditional independence follows if we have
\begin{align*}
\alpha = \frac{ \langle \tilde{\omega} , \hat{\tau} \rangle \langle \tau, \hat{\tau} \rangle - \langle \tilde{\omega}, \tau \rangle }{ \langle \hat{\tau}, \tau \rangle^2 - 1 }, \text{ and } \, \hat{\alpha} =  \frac{ \langle \tilde{\omega} , \tau \rangle \langle \tau, \hat{\tau} \rangle - \langle \tilde{\omega}, \hat\tau \rangle }{ \langle \hat{\tau}, \tau \rangle^2 - 1 }.
\end{align*}
Next we show that $\alpha = \mathcal{O}_p(1)$. Note that $\tilde{\omega} = \E[r_n^2] \mathrm{e}_1$. Therefore, we have
\begin{align*}
\alpha &= \frac{ \E[r_n^2] ( \hat{\tau}_1  \langle \tau, \hat{\tau} \rangle - \tau_1) }{ \langle \hat{\tau}, \tau \rangle^2 - 1 }\\
&= \frac{ \E[r_n^2] \hat{\tau}_1 }{ \langle \hat{\tau}, \tau \rangle + 1} + \frac{ \E[r_n^2] ( \hat{\tau}_1 - \tau_1 )}{ ( \langle \hat{\tau}, \tau \rangle + 1 )(\langle \hat{\tau}, \tau \rangle - 1)} = \mathcal{O}_p(1),
\end{align*}
since $| \hat{\tau}_1 | \leq 1$ and $\langle \hat{\tau} , \tau \rangle \rightarrow_p 1$ (see below) and $\E[r_n^2] \leq \lambda_{\max}(\Sigma)$, and by assumption we have $(\hat{\tau}_1 - \tau_1) = \mathcal{O}_p \left(\langle \hat{\tau}, \tau \rangle - 1 \right) $. The argument for $\hat{\alpha} = \mathcal{O}_p(1)$ is similar.
\end{proof}

\textbf{Proof of Theorem \ref{thm-efficient}}. First we show that $\hat{\tau}$ has the same rate of convergence as $\hat{\beta}$. To see this, use the triangle inequality to write
\begin{align*}
\|\hat{\tau} - \tau\|_2 &= \left\|\frac{\hat{\beta}}{\hat{\mu}_1} - \frac{\beta}{\mu_1}\right\|_2 \\
&\leq \left\| \frac{\hat{\beta}}{\hat{\mu}_1} - \frac{\hat{\beta}}{\mu_1} \right\|_2 + \left\| \frac{\hat{\beta}}{\mu_1} - \frac{\beta}{\mu_1} \right\|_2\\
&=\frac{1}{\mu_1} \left( |\hat{\mu}_1 - \mu_1 | \cdot \frac{\|\hat{\beta}\|_2}{\hat{\mu}_1}   + \|\hat{\beta} -\beta\|_2 \right).
\end{align*}

We can now write 
\begin{align*}
|\hat{\mu}_1 - \mu| &= \left| \|\Sigma^{\frac{1}{2}} \hat{\beta} \|_2 - \| \Sigma^\frac{1}{2} \beta \|_2 \right| \\
&\leq \|\Sigma^{\frac{1}{2}} (\hat{\beta} - \beta) \|_2\\
&\leq \lambda_{\max}(\Sigma^{\frac{1}{2}}) \cdot \|\hat{\beta} - \beta\|_2.
\end{align*}
On the other  hand,
\begin{align*}
\frac{\|\hat{\beta}\|_2}{\hat{\mu}_1} = \frac{\|\hat{\beta}\|_2}{\| \Sigma^{\frac{1}{2}} \hat{\beta} \|_2} \leq \max_{\theta \in \mathbf{R}^p}   \frac{\|\theta\|_2}{\| \Sigma^{\frac{1}{2}} \theta \|_2} 
= \left(\min_{\theta \in \mathbf{R}^p}   \frac{\|\Sigma^{\frac{1}{2}} \theta \|_2}{\|\theta\|_2 } \right)^{-1} = \lambda_{\min}^{-1}(\Sigma^{\frac{1}{2}}).
\end{align*}
The last three bounds put together yield
\begin{align*}
\| \hat{\tau} - \tau \|_2 \leq \frac{1 + \kappa(\Sigma)^{\frac{1}{2}}}{\mu_1} \|\hat{\beta} - \beta\|_2,
\end{align*}
where $\kappa(\Sigma) = \lambda_{\max}(\Sigma) / \lambda_{\min}(\Sigma)$ is the condition number of $\Sigma$. By assumptions (2) and(1), the parameters $\mu_1$ and $\kappa(\Sigma)$ are bounded away from zero and infinity, respectively, showing that $\hat{\tau}$ has the same rate of convergence as $\hat{\beta}$.

We can now write 

\begin{align*}
\sqrt{n} (\frac{1}{n} \sum_{i=1}^n r_i x_{i1})\frac{(\tilde{\beta}_1 - \beta_1)}{\sqrt{\E r_n^2 e_n^2}} &= \frac{1}{\sqrt{n\E r_n^2 e_n^2}} \sum_{i=1}^n r_i e_i  \tag{A}\\
&+ \frac{1}{\sqrt{n\E r_n^2 e_n^2}} \sum_{i=1}^n r_i (\mu_0 - \hat{\mu}_0) \tag{B} \\
&+ \frac{1}{\sqrt{n\E r_n^2 e_n^2}} \sum_{i=1}^n r_i  \langle \mu_1\tau_{-1} - \hat{\mu}_1 \hat{\tau}_{-1}, x_{i,-1}\rangle \tag{C} \\
&+\frac{1}{\sqrt{n\E r_n^2 e_n^2}} \sum_{i=1}^n r_i  \sum_{j=2}^m \mu_j [h_j(\langle \tau, x_i \rangle) - h_j(\langle \hat{\tau}, x_i \rangle)] \tag{D}\\
&+\frac{1}{\sqrt{n\E r_n^2 e_n^2}} \sum_{i=1}^n \sum_{j=2}^m r_i(\mu_j - \hat{\mu}_j) h_j(\langle \hat{\tau}, x_i \rangle) \tag{E}\\
&+ \frac{1}{\sqrt{n\E r_n^2 e_n^2}} \sum_{i = 1}^n r_i\sum_{j = m+1}^\infty \mu_j h_j(\langle \tau, x_i \rangle). \tag{F}
\end{align*}

We will show that the first term (A) converges in law to a normal distribution and the other terms (B-F) converge to zero in probability.

\textbf{(A)}. As argued in the proof of Theorem \ref{thm_known_sig}, an application of H\"older's inequality shows that $\E|e_n|^{2 + \alpha} < M$ implies that $\E|r_n e_n|^{2 + \alpha}$ is uniformly bounded above. Therefore the Lyapunov condition for the central limit theorem is satisfied and the first term $(A)$ converges to $N(0,1)$ in distribution.

\textbf{(B)}. Since $\E[B | \mathcal{S}_2] = 0$, and $\E \hat{\mu}_0 = \mu_0$, the variance of $(B)$ evaluates to
\begin{align*}
\V(B) &= \V[\E[B | \mathcal{S}_2]] + \E[\V[B|\mathcal{S}_2]]\\
&=\frac{\E(\mu_0 - \hat{\mu}_0)^2 \E r_n^2}{\E r_n^2 e_n^2}\\
&\leq \frac{\lambda_{\max}(\Sigma) \E y_n^2}{\lceil n/2 \rceil \E r_n^2 e_n^2} \rightarrow 0.
\end{align*}

\textbf{(C)}. The ``linear'' term $(C)$ has been handled in the proof of Theorem \ref{thm_known_sig}, as by definition $\hat{\mu}_1 \hat{\tau} = \hat{\beta}$ and $\mu_1 \tau = \beta$.

\textbf{(D) . } From Proposition \ref{hermite-prop} it is clear that $\E[D \mid \mathcal{S}_2] = 0$. Therefore it suffices to show that $\E[D^2 \mid \mathcal{S}_2] = o_p(1)$.
Let $\tilde{\gamma}^T = (1, -\gamma^T)$ so that $\Sigma \tilde{\gamma} = \E[r_n^2] \mathrm{e}_1$.  Then we can write $\xi_n := \langle \tau, x_n \rangle = \tau_1 r_n + u_n$ and $\hat{\xi}_n := \langle \hat{\tau}, x_n \rangle = \hat{\tau}_1 r_n + \hat{u}_n$, where after conditioning on $\mathcal{S}_2$, the random variables $u_n, \hat{u_n}$ are independent of $r_n$. For each value of $u_n,\hat{u}_n$ define $\psi_{u_n,\hat{u}_n}$ as a random function of $r_n$ as follows:
\begin{align*}
\psi_{u_n, \hat{u}_n} := \sum_{j=2}^m \mu_j [ h_j( \tau_1 r_n + u_n) - h_j( \hat{\tau}_1 r_n + \hat{u}_n) ]
\end{align*}
Condition on $\mathcal{S}_2, u_n, \hat{u}_n$ and use Stein's lemma to obtain
\begin{align*}
\E[ r_n^2 \psi_{u_n, \hat{u}_n}^2 \mid \mathcal{S}_2, u_n, \hat{u}_n] &= 
\E[r_n^2 \mid \mathcal{S}_2, u_n, \hat{u}_n ] \E[  \psi_{u_n, \hat{u}_n}^2 \mid \mathcal{S}_2, u_n, \hat{u}_n] \\
&+ 2 (\E[r_n^2 \mid \mathcal{S}_2, u_n, \hat{u}_n ])^2  \E[  \psi_{u_n, \hat{u}_n}'^2 \mid \mathcal{S}_2, u_n, \hat{u}_n] \\
&+ 2 (\E[r_n^2 \mid \mathcal{S}_2, u_n, \hat{u}_n ])^2  \E[  \psi_{u_n, \hat{u}_n}\psi_{u_n, \hat{u}_n}'' \mid \mathcal{S}_2, u_n, \hat{u}_n].
\end{align*}
Taking another expectation with respect to the distribution of $u_n, \hat{u}_n$ given $\mathcal{S}_2$ and using the tower property of conditional expectations yields\footnote{This trick will be used several times in the proofs and we will refer to this discussion for details.}
\begin{align}
\E[ D^2 \mid \mathcal{S}_2] &= \E[ r_n^2 \psi_{u_n, \hat{u}_n}^2 \mid \mathcal{S}_2] \nonumber \\
&=\E[r_n^2  ] \E[  \psi_{u_n, \hat{u}_n}^2 \mid \mathcal{S}_2] 
+ 2 (\E[r_n^2  ])^2  \left( \E[  \psi_{u_n, \hat{u}_n}'^2 \mid \mathcal{S}_2] 
+  \E[  \psi_{u_n, \hat{u}_n}\psi_{u_n, \hat{u}_n}'' \mid \mathcal{S}_2] \right). \label{stein_trick}
\end{align}
We need to show that the three terms in the above sum are negligible in probability. Let us consider $\E[ \psi_{u_n, \hat{u}_n}^2 \mid \mathcal{S}_2]$ first. Using the orthonormality of Hermite polynomials (Proposition \ref{hermite-prop}), we obtain
\begin{align*}
\E[ \psi_{u_n, \hat{u}_n}^2 \mid \mathcal{S}_2] &= \sum_{j=2}^m \mu_j^2 \E[ (h_j(\xi) - h_j(\hat\xi))^2 \mid \mathcal{S}_2]\\
&= \sum_{j=2}^m \mu_j^2 ( 2 - 2 \langle \Sigma^\frac12 \tau,  \Sigma^\frac12 \hat{\tau} \rangle^j)\\
&= 2 \sum_{j=2}^m \mu_j^2 \left( 1 - \left( 1 - \frac{\| \Sigma^\frac12 ( \hat\tau - \tau )\|_2^2}{2} \right)^j \right)\\
&\leq \lambda_{\max}(\Sigma)^\frac12 \|  \hat\tau - \tau \|_2^2 \sum_{j=2}^m j \mu_j^2,
\end{align*}
where the last inequality follows because $1 - (1-\theta)^j \leq j \theta$ for $\theta \in [0,1]$. Since $\sum_j j \mu_j^2 \lesssim L^2$ and $\| \hat\tau - \tau \|_2^2 \rightarrow_p 0$, it follows that $\E[\psi_{u_n, \hat{u}_n}^2 \mid \mathcal{S}_2] \rightarrow_p 0$ as well.

Next we consider $ \E[  \psi_{u_n, \hat{u}_n}'^2 \mid \mathcal{S}_2] $. Using Proposition \ref{hermite-prop} to compute derivatives and inner products of Hermite polynomials, we obtain
\begin{align*}
 \E[  \psi_{u_n, \hat{u}_n}'^2 \mid \mathcal{S}_2] &= \E\left[ \left(  \sum_{j=2}^m \mu_j \sqrt{j} ( \tau_1 h_{j-1}(\xi) - \hat{\tau}_1 h_{j-1}(\hat{\xi})) \right)^2  \given \mathcal{S}_2 \right]\\
 &= \sum_{j=2}^m j \mu_j^2  (\tau_1^2 + \hat{\tau}_1^2 - 2 \tau_1 \hat{\tau}_1 \langle \Sigma^\frac12 \tau, \hat{\tau} \rangle^{j-1})\\
 &= \sum_{j=2}^m j \mu_j^2  \left((\tau_1 - \hat{\tau}_1)^2 + 2 \tau_1 \hat{\tau}_1 (1 -  \langle \Sigma^\frac12 \tau, \Sigma^\frac12 \hat{\tau} \rangle^{j-1}) \right)\\
 &\leq (\tau_1 - \hat{\tau}_1)^2 \sum_{j=2}^m j\mu_j^2 + 2 \tau_1 \hat{\tau}_1 \lambda_{\max}(\Sigma)^\frac12 \| \hat{\tau} - \tau \|_2^2 \sum_{j=2}^m j(j-1) \mu_j^2.
\end{align*}
That the first summand is $o_p(1)$ is clear. For the second summand, use the Cauchy-Schwarz inequality to obtain
\begin{align*}
\sum_{j=2}^m j(j-1) \mu_j^2 &\leq \left( \sum_{j=2}^m (j\mu_j^2)^2 \right)^\frac12  \left( \sum_{j=2}^m (j-1)^2\right)^\frac12\\
&\leq \left( \sum_{j=2}^m (j\mu_j^2) \right)  \left( \sum_{j=2}^m (j-1)^2\right)^\frac12\\
&\lesssim L^2 \cdot m^\frac32.
\end{align*}
Since $m^\frac32 \|\hat{\tau} - \tau \|_2^2 \leq \log(p) \| \hat{\tau} - \tau \|_2^2  \rightarrow_p 0$, we have proved that $\E[\psi_{u_n, \hat{u}_n}^2 \mid \mathcal{S}_2] = o_p(1)$.

Next, the term $\E[  \psi_{u_n, \hat{u}_n}\psi_{u_n, \hat{u}_n}'' \mid \mathcal{S}_2]$ is calculated as follows:
\begin{align*}
& \E\left[  \left( \sum_{k=2}^m \mu_k(h_k(\xi) - h_k(\hat\xi))  \right) \left( \sum_{j=2}^m \mu_j \sqrt{j(j-1)} ( \tau_1^2 h_{j-2}(\xi) - \hat{\tau}_1^2 h_{j-2}(\hat\xi))  \right)  \given \mathcal{S}_2 \right]\\
&= \sum_{j=2}^{m-2} \sqrt{(j+2)(j+1)} \mu_j \mu_{j+2} ( \tau_1^2 + \hat\tau_1^2 )( 1 - \langle \Sigma^\frac12 \tau, \Sigma^\frac12 \hat{\tau} \rangle^j )\\
&\leq   (\tau_1^2 + \tau_1^2)  \lambda_{\max}(\Sigma)^\frac12\| \hat{\tau} - \tau \|_2^2 \sum_{j=2}^{m-2} j (j+2) \mu_j \mu_{j+2}\\
&\leq (\tau_1^2 + \tau_1^2)  \lambda_{\max}(\Sigma)^\frac12\| \hat{\tau} - \tau \|_2^2 \sum_{j=2}^m j^2 \mu_j^2
\end{align*}
where the last inequality follows from the elementary inequality $2 j(j+2) \mu_j \mu_{j+2} \leq j^2 \mu_j^2 + (j+2) \mu_{j+2}^2$. As before, it is easy to show that
\begin{align*}
\| \hat{\tau} - \tau \|_2^2 \sum_{j=2}^m j^2 \mu_j^2 \lesssim \| \hat{\tau} - \tau \|_2^2 L^2 m^\frac32 = o_p(1).
\end{align*}
This completes the proof of $\E[D^2 \mid \mathcal{S}_2] = o_p(1)$.

\textbf{(E)}. The same technique can be applied to compute the conditional variance of $(E)$. Use a Gaussian decomposition $r_i = \alpha' \hat{\xi}_i + v_i$ where after conditioning on $\mathcal{S}_2$, $v_i$ is independent of $\hat{\xi}_i$, to rewrite $(E)$ as 
\begin{align*}
E &= \frac{1}{\sqrt{n\E r_n^2 e_n^2}} \sum_{i=1}^n \sum_{j=2}^m \alpha' \hat{\xi}_i (\mu_j - \hat{\mu}_j) h_j(\langle \hat{\tau}, x_i \rangle) \tag{$E_1$} \\ &+ \frac{1}{\sqrt{n\E r_n^2 e_n^2}} \sum_{i=1}^n \sum_{j=2}^m v_i (\mu_j - \hat{\mu}_j) h_j(\langle \hat{\tau}, x_i \rangle). \tag{$E_2$}
\end{align*}

The conditional variance of $E_2$ is
\begin{align*}
\E[E_2^2 | \mathcal{S}_2] &= \frac{\E[v_n^2 | \mathcal{S}_2]}{\E r_n^2 e_n^2} \sum_{j=2}^m (\mu_j - \hat{\mu}_j)^2.
\end{align*}
From the definition of $v_n$, it can be seen that $v_n$ depends on $\mathcal{S}_2$ only through $\hat{\tau}$, so that we have $\E[v_n^2|\mathcal{S}_2] = \E[v_n^2|\hat{\tau}]$. Also note that $\hat{\tau}$ is computed on $\mathcal{S}_{21} \subset \mathcal{S}_2$, so that $\mathcal{S}_2$ contains all the information \footnote{In terms of $\sigma$-algebras we have $\sigma(\hat{\tau}) \subset \sigma(\mathcal{S}_2)$.} about $\hat{\tau}$. This allows us to use the tower property of conditional expectations to write
\begin{align*}
\E[E_2^2 | \hat{\tau}] &= \E[\E[E_2^2 | \mathcal{S}_2] | \hat{\tau}] \\
&= \frac{\E[v_n^2 | \hat{\tau}]}{\E r_n^2 e_n^2} \sum_{j=2}^m \E[(\mu_j - \hat{\mu}_j)^2 | \hat{\tau}]\\
&= \frac{\E[v_n^2 | \hat{\tau}]}{\E r_n^2 e_n^2} \sum_{j=2}^m \E[(\hat{\mu}_j - \E[\hat{\mu}_j | \hat{\tau}])^2 | \hat{\tau}] + (\E[\hat{\mu}_j | \hat{\tau}] - \mu_j)^2
\end{align*}

The conditional mean and variance of $\hat{\mu}_j$ are 
\begin{align*}
\E [\hat{\mu}_j \mid \hat{\tau} ] &= \E[y_{2n} h_j(\langle x_{2n} , \hat{\tau})]  = \mu_j \langle \Sigma \tau, \hat{\tau} \rangle^j,\\
\V[\hat{\mu}_j \mid \hat{\tau}] &= \frac{1}{\lceil n/2 \rceil} (\E y_{2n}^2 h_j^2(\langle x_{2n}, \hat{\tau}\rangle) - \mu_j^2 \langle \Sigma \tau, \hat{\tau} \rangle^j).
\end{align*}

To bound the conditional variance, we use the Cauchy-Schwarz inequality and the last part of Proposition \ref{hermite-prop} to obtain
\begin{align*}
\E y_{2n}^2 h_j^2(\langle x_{2n}, \hat{\tau}\rangle) &\leq \|y_{2n}\|_4^2 \cdot \|h_j\|_4^2 \\
&\leq C_y^2 \cdot C_h^2 \cdot \frac{3^j}{j^{1/2}} ,
\end{align*}
for some absolute constant $C_h$. Using the latter to bound the conditional variance of $E_2$ we obtain
\begin{align*}
\E[E_2^2\mid \hat{\tau}] &\leq \frac{\E[v_n^2 | \hat{\tau}]}{\E r_n^2 e_n^2} \left( C_y^2 C_h^2 \sum_{j=2}^m \frac{3^j}{j^{1/2}\lceil n/2\rceil} + \mu_j^2(1 - \langle \Sigma \tau, \hat{\tau} \rangle^j)^2 \right)\\
&\leq \frac{\E[v_n^2 | \hat{\tau}]}{\E r_n^2 e_n^2} \left( C_y^2 C_h^2 \frac{3^{m+1}}{\lceil n/2\rceil} + \frac{\lambda_{\max}^2(\Sigma) \|\tau - \hat{\tau}\|_2^4}{4}  \sum_{j=2}^m j^2\mu_j^2  \right).
\end{align*}

Let us now consider the term $(E_1)$. The conditional variance is 
\begin{align*}
\E[E_1^2 | \mathcal{S}_2] =  \frac{(\alpha')^2}{\E r_n^2 e_n^2} (\E[\hat{\xi}_n^2(\sum_{j=2}^m (\mu_j - \hat{\mu}_j) h_j(\hat{\xi}_n))^2| \mathcal{S}_2]).
\end{align*}
Applying Stein's lemma (see the derivation of equation (\ref{stein_trick}) for details) to $\psi(\hat{\xi}_n) = \sum_{j=2}^m (\mu_j - \hat{\mu}_j) h_j(\hat{\xi}_n)$, we get
\begin{align*}
\E[E_1^2 | \mathcal{S}_2] = \frac{(\alpha')^2}{\E r_n^2 e_n^2} &(  \E[(\sum_{j=2}^m (\mu_j - \hat{\mu}_j) h_j(\hat{\xi}_n))^2 | \mathcal{S}_2] \\
&+  \E[ (\sum_{j=2}^m \sqrt{j}(\mu_j - \hat{\mu}_j) h_{j-1}(\hat{\xi}_n))^2 | \mathcal{S}_2 ] \\
&+ \E[(\sum_{j=2}^m (\mu_j - \hat{\mu}_j) h_j(\hat{\xi}_n)) (\sum_{j=2}^m \sqrt{j(j-1)} (\mu_j - \hat{\mu}_j) h_{j-2}(\hat{\xi}_n))] ) \\
\end{align*}

By the orthonormality of Hermite polynomials, this simplifies to
\begin{align*}
\E[E_1^2 | \mathcal{S}_2] &=  \frac{(\alpha')^2}{\E r_n^2 e_n^2} (\sum_{j=2}^m (\mu_j - \hat{\mu}_j)^2 + j(\mu_j - \hat{\mu}_j)^2 \\
&+\sum_{j=2}^{m-2}\sqrt{(j+1)(j+2)} (\mu_j - \hat{\mu}_j)(\mu_{j+2} - \hat{\mu}_{j+2}) ) \\
&\leq  \frac{3(\alpha')^2}{\E r_n^2 e_n^2} \sum_{j=2}^m j(\mu_j - \hat{\mu}_j)^2.
\end{align*}

Using our previous calculations for the conditional mean and variance of $\hat{\mu}_j$, we obtain
\begin{align*}
\E[E_1^2 | \hat{\tau}] &\leq  \frac{3(\alpha')^2}{\E r_n^2 e_n^2}
\sum_{j=2}^m j(\E[(\hat{\mu}_j - \E[\hat{\mu}_j|\hat{\tau}])^2 \mid \hat{\tau}]) + j (\E[\hat{\mu}_j|\hat{\tau}] - \mu_j) \\
&\leq \frac{3(\alpha')^2}{\E r_n^2 e_n^2} (C_y^2 C_h^2 \sum_{j=2}^m \frac{j 3^j}{j^{1/2}\lceil n/2\rceil }) + \frac{3(\alpha')^2}{\E r_n^2 e_n^2}(\sum_{j=2}^m j \mu_j^2 (1 - \langle \Sigma \tau, \hat{\tau} \rangle^j )^2 ) \\
&\leq \frac{3(\alpha')^2 C_y^2 C_h^2}{\E r_n^2 e_n^2} \cdot \frac{m 3^{m+1}}{\lceil n/2\rceil} + \frac{3(\alpha')^2}{\E r_n^2 e_n^2}(\lambda_{\max}^2(\Sigma)\frac{\|\tau - \hat{\tau}\|_2^4}{4} \sum_{j=2}^m j^3 \mu_j^2).
\end{align*}

\textbf{(F).} Finally, we consider the term $(F)$ by using once again the Gaussian decomposition $r_i = \alpha'' \xi_i + v_i'$ with $v_i'$ independent of $\xi_i$ to rewrite $(F)$ as
\begin{align*}
F &= \frac{1}{\sqrt{n\E r_n^2 e_n^2}} \sum_{i = 1}^n \alpha'' \xi_i \sum_{j = m+1}^\infty \mu_j h_j(\xi_i) \tag{$F_1$}\\
&+ \frac{1}{\sqrt{n\E r_n^2 e_n^2}} \sum_{i = 1}^n v_i' \sum_{j = m+1}^\infty \mu_j h_j(\xi_i) \tag{$F_2$}.
\end{align*}
The second term $(F_2)$ has variance
\begin{align*}
\E[F_2^2] = \frac{\E(v_n')^2}{\E r_n^2 e_n^2} \sum_{j=m+1}^\infty \mu_j^2. 
\end{align*}
The first term has variance
\begin{align*}
\E[F_1^2] \leq \frac{(\alpha'')^2}{\E r_n^2 e_n^2} \E \xi_n^2(\sum_{j=m+1}^\infty \mu_j h_j(\xi_n))^2.
\end{align*}
Applying Stein's lemma (see the derivation of equation (\ref{stein_trick}) for details) to the function $\psi(\xi_n) = \sum_{j=m+1}^\infty \mu_j h_j(\xi_n)$, we obtain
\begin{align*}
\E \xi_n^2(\sum_{j=m+1}^\infty \mu_j h_j(\xi_n))^2 
&= \E(\sum_{j=m+1}^\infty \mu_j h_j(\xi_n))^2 \\
&+ 2\E(\sum_{j=m+1}^\infty \sqrt{j} \mu_j h_{j-1}(\xi_n))^2\\
&+ 2\E(\sum_{j=m+1}^\infty \mu_j h_j(\xi_n))(\sum_{j=m+1}^\infty \sqrt{j(j-1)}\mu_j h_{j-2}(\xi_n))\\
&= \sum_{j=m+1}^\infty (2j + 1) \mu_j^2 + 2\sqrt{(j+1)(j+2)} \mu_j  \mu_{j+2}\\
&\leq \sum_{j=m+1}^\infty (4j+1) \mu_j^2\\
&\leq 5 \sum_{j=m+1}^\infty j \mu_j^2.
\end{align*}

Let us consider the random variables such as $\alpha, \alpha', u_n$ etc. that result from the Gaussian decomposition of $r_n$. All these term can be shown to have bounded variance. For example, let us take a closer look at $\alpha, \hat{\alpha}, u_i$ appearing in $r_n = \alpha \xi_n + \hat{\alpha} \hat{\xi}_n + u_n$. Note that by construction, $u_n$ is independent of $\xi_n$ and $\hat{\xi}_n$ given $\mathcal{S}_2$, so that
\begin{align*}
\E[r_n^2] &= \E[\E[(\alpha \xi_n + \hat{\alpha} \hat{\xi}_n + u_n)^2 | \mathcal{S}_2]] \\
&= \E[\alpha^2 + \hat{\alpha}^2 + u_n^2 + 2 \langle \Sigma \tau, \hat{\tau} \rangle].
\end{align*}
Since $\E r_n^2 \leq \E x_{n,1}^2$ and $|\langle \Sigma \tau,\hat{\tau} \rangle | \leq 1$, we have
\begin{align*}
\E \alpha^2 + \E \hat{\alpha}^2 + \E u_n^2 \leq \Sigma_{11} + 2 \leq \lambda_{\max}(\Sigma) + 2.
\end{align*}
Thus the variances of all these terms is bounded above by $C + 2$ where $C$ is by assumption (1) a constant independent of $n$, implying that these variables are all $\mathcal{O}_p(1)$. 

Ignoring constants and $\mathcal{O}_p(1)$ terms, it suffices to show that the following dominant terms converge to zero:
\begin{align}
\|\tau - \hat{\tau}\|_2^2 \sum_{j=2}^m j^2\mu_j^2 , \quad 
\|\tau - \hat{\tau} \|_2^4 \sum_{j=2}^m j^3 \mu_j^2, \quad
\sum_{j= m+1}^\infty j \mu_j^2, \quad
\frac{m 3^{m+1}}{\lceil n/2 \rceil}.
\end{align}

The last term converges to zero by the choice of $m = \lfloor \log^{\frac{2}{3}}(n) \rfloor$. For the first three terms to be negligible we need the smoothness of $g$. 
Since by assumption $\| g' \|_{L^2}^2 \leq L^2$ , Lemma \ref{decay} implies that $\sum_{j=1}^\infty j \mu_{j}^2 < L^2$, which immediately proves the term $\sum_{j= m+1}^\infty j \mu_j^2$ converges to zero.
Using the Cauchy-Schwarz inequality, we have
\begin{align*}
\sum_{j=2}^m j^2 \mu_j^2 &\leq (\sum_{j=2}^m (j\mu_j^2)^2)^\frac{1}{2} (\sum_{j=1}^m j^2)^\frac{1}{2} = \mathcal{O}(L^2 m^{\frac{3}{2}}), \quad \text{ and, } \\
\sum_{j=2}^m j^3 \mu_j^2 &\leq (\sum_{j=2}^m (j\mu_j^2)^2)^\frac{1}{2} (\sum_{j=1}^m j^4)^\frac{1}{2} = \mathcal{O}(L^2 m^{\frac{5}{2}}),
\end{align*}

Now using $\log(n) \leq \log(p)$ we obtain
\begin{align*}
m^{\frac{3}{2}} \cdot \|\hat{\tau} - \tau \|_2^2 &\leq \log(p) \cdot \|\hat{\tau} - \tau \|_2^2 \lesssim \log(p) \cdot \frac{s \log(p)}{n} \rightarrow 0.
\end{align*}
Finally, we can write
\begin{align*}
m^{\frac{5}{2}} \|\hat{\tau} - \tau \|_2^4 \leq \left( m^{\frac{3}{2}} \|\hat{\tau} - \tau \|_2^2 \right)^2 \rightarrow_p 0.
\end{align*}

\subsection{Proof of Theorem \ref{thm-efficient-unknown}}
 Before we provide the proof of Theorem \ref{thm-efficient-unknown}, we note that the assumptions of Theorem \ref{thm-efficient-unknown} (specifically, assumptions 1, 5 and 8) include the assumptions of Proposition \ref{prop1} providing error rates for the lasso estimators $\hat{\beta}, \check{\beta}$. Therefore the conclusion of Proposition \ref{prop1} is assumed throughout this section. That is, we assume that with probability $1 - o(1)$ we have
 \begin{align*}
 \| \hat{\beta} - \beta \|_1 \lesssim s \sqrt{\frac{\log(p)}{n}} , \quad \text{and} \quad \| \hat{\beta} - \beta \|_2 \lesssim \sqrt{\frac{s\log(p)}{n}},
 \end{align*}
with $\check{\beta}$ satisfying the same inequalities.

When $\Sigma$ is not known, $\hat{\mu}_1, \hat{\tau}$ are defined using $\hat{\Sigma}$ instead of $\Sigma$:
\begin{align*}
\hat{\mu}_1 := \| \hat{\Sigma}^\frac{1}{2} \hat{\beta}\|_2, \, \hat{\tau} := \hat{\mu}_1^{-1} \hat{\beta}.
\end{align*}
This normalization via $\hat{\Sigma}$ introduces a difficulty, namely that $\Sigma^{\frac{1}{2}} \hat{\tau}$ is no longer a unit vector, which also implies that $\langle x_i, \hat{\tau} \rangle$ is no longer a standard normal random variable, as opposed to the case of known $\Sigma$. This also means that the Hermite polynomials in $\langle x_i, \hat{\tau} \rangle$ are no longer orthogonal, and part 2 of Proposition \ref{hermite-prop} is no longer applicable. The following proposition generalizes the latter when $\tau, \hat{\tau}$ are not unit vectors. We use $i \equiv_2 j$ to mean $i - j$ is an even integer.

\begin{proposition}\label{non-unit-hermite-prop}
Suppose that $\{h_j\}_{j=0}^\infty$ are Hermite polynomials defined by (\ref{hermite_def}) and suppose $x \sim N(0, I)$. Then for nonrandom vectors $\alpha, \hat{\alpha}$  we have 
\begin{align*}
\E[ h_j( \langle \alpha, x \rangle ) h_k( \langle \hat{\alpha}, x \rangle ) ] = \sqrt{j! k! }  \sum_{ \stackrel{ 0 \leq i \leq \min \{ j,k \} }{ j \equiv_2 i \equiv_2 k}} \frac{ \langle \alpha, \hat{\alpha} \rangle^i \left(\frac{\| \alpha \|_2^2 - 1}{2}\right)^\frac{j-i}{2}\left(\frac{\| \hat\alpha \|_2^2 - 1}{2}\right)^\frac{k-i}{2}}{i! (\frac{j-i}{2})! (\frac{k-i}{2})! }.
\end{align*}
In particular, when $\| \alpha \|_2= 1$, we have 
\begin{align*}
\E[ h_j( \langle \alpha, x \rangle ) h_k( \langle \hat{\alpha}, x \rangle ) ] = \sqrt{\frac{k!}{j!}} \frac{ \langle \alpha, \hat{\alpha} \rangle^j \left(\frac{\| \hat\alpha \|_2^2 - 1}{2}\right)^\frac{k-j}{2}  }{(\frac{k-j}{2})!  } \mathbf{1}[ j \leq k, j \equiv_2 k].
\end{align*}
\end{proposition}
\begin{proof}
We extend O'Donnell's argument \cite[Proposition 11.31]{o2014analysis} to non-standard Gaussians, i.e. when $\alpha$ and $\hat{\alpha}$ are not unit vectors. The idea is to calculate the coefficients of the power series of a joint moment generating function in two ways and match the resulting coefficients. Let $\xi = \langle \alpha, x \rangle$ and $\hat{\xi} = \langle \hat{\alpha}, x \rangle$. Then we have
\begin{align*}
\E[\exp( s \xi + t \hat{\xi})] = \exp\left( \frac{1}{2} ( \|\alpha\|_2^2 s^2 + 2 \langle \alpha, \hat{\alpha} \rangle st + \|\hat{\alpha} \|_2^2 t^2 ) \right).
\end{align*}
Multiplying both sides by $\exp(-(s^2 + t^2)/2)$, we obtain
\begin{align*}
\E[\exp( s \xi - \frac{s^2}{2}) \exp( t \hat{\xi} -\frac{t^2}{2})] = \exp\left( \frac{1}{2} ( (\|\alpha\|_2^2-1) s^2 + 2 \langle \alpha, \hat{\alpha} \rangle st  + (\|\hat{\alpha} \|_2^2-1)t^2 ) \right).
\end{align*}
It can be shown that $\exp(s\xi - s^2/2) = \sum_{j=0}^\infty h_j(\xi)s^j / \sqrt{j!}$ \cite[\S 11.2]{o2014analysis}, and similarly for $\exp(t\hat{\xi} -t^2/2)$. 
Plugging this power series, the left-hand side is equal to
\begin{align*}
\sum_{j,k\geq0} \frac{1}{\sqrt{j!k!}} \E[h_j(\xi) h_k(\hat{\xi})] s^j t^k.
\end{align*}
To simplify notation write $\theta^2 = (\| \alpha \|_2^2 - 1)/2$ and  $\hat{\theta}^2 =( \|\hat{\alpha}\|_2^2 -1)/2$. The power series of the right-hand side is then equal to
\begin{align*}
 \sum_{l\geq 0} \frac{\theta^{2l} s^{2l}}{l!} \cdot \sum_{m\geq 0} \frac{\langle \alpha, \hat{\alpha} \rangle^m s^m t^m}{m!} \cdot \sum_{n\geq 0} \frac{\hat{\theta}^{2n} s^{2n}}{n!} &= \sum_{l,m,n\geq 0} \frac{ \langle \alpha, \hat{\alpha} \rangle^m  \theta^{2l} \hat{\theta}^{2n} }{l! m! n! } s^{m + 2l} t^{m + 2n}  \\
&= \sum_{j,k\geq 0} \left( \sum_{\stackrel{ 0 \leq i \leq \min\{j,k\}}{j \equiv_2 i \equiv_2 k} }  \frac{\langle \alpha, \hat{\alpha} \rangle^i \theta^{j-i}  \hat{\theta}^{k-i} }{ i! \left(\frac{j-i}{2} \right)! \left( \frac{k-i}{2} \right)!} \right) s^j t^k
\end{align*}
where in the last equality we use the convention that an empty sum is equal to zero (which happens when $ j \not\equiv_2 k$). Equating the corresponding coefficients of $s^j t^k$ in the last two displays finishes the proof of the first assertion, from which the second part also follows immediately.
\end{proof}
\begin{lemma}\label{inner-prod-bound-lemma}
Suppose that $j \equiv_2 k$ and $\omega\geq0$. We have
\begin{align*}
\sum_{\stackrel{0 \leq j \leq k -2 }{j \equiv_2 k }} \sqrt{\frac{k!}{j!}} \frac{\omega^\frac{k-j}{2}  }{(\frac{k-j}{2})!  } \leq 4 \omega e^{\omega k} \quad \text{and} \quad
\sum_{\stackrel{0 \leq j \leq k -2 }{j \equiv_2 k }} \frac{k!}{j!} \frac{\omega^{k-j}}{( \frac{k-j}{2})!^2} \leq 16 \omega^2 e^{\omega^2 k^2}.
\end{align*}
\end{lemma}
\begin{proof}
It is easy to verify that $(j+2)(j+1)/(k-j)^2 \leq 4 $ for all $j \leq k -2$, and therefore,
\begin{align}\label{ineq:factorial-ratio}
\frac{k!}{j!} \cdot  \frac{4}{(k-j)^2 } \leq 4  k\cdot (k-1) \cdots (j+3) \cdot \frac{(j+2)(j+1)}{(k-j)^2} \leq 16 k^{k - j - 2}, \quad \forall j \leq k - 2.
\end{align}
Using this inequality we can write
\begin{align*}
\sum_{\stackrel{0 \leq j \leq k -2 }{j \equiv_2 k }} \sqrt{\frac{k!}{j!}} \frac{\omega^\frac{k-j}{2}  }{(\frac{k-j}{2})!  } &\leq
4 \omega  \sum_{\stackrel{0 \leq j \leq k -2 }{j \equiv_2 k }} \frac{(k \omega)^\frac{k-2-j}{2} }{ (\frac{k-2-j}{2})! } \leq 4 \omega e^{k\omega},
\end{align*}
which completes the proof of the first inequality. The second inequality is proved similarly using (\ref{ineq:factorial-ratio}).
\end{proof}

\begin{lemma}\label{lemma:mean-squares}
Suppose that $2 \leq j \leq m$ and $x \sim N(0,\Sigma)$. Assume $\|\Sigma^\frac12 \tau \|_2 = 1$ and $\|\Sigma^\frac12 \hat{\tau} \|_2 = 1 + \mathcal{O}_p(1/\sqrt{n})$ and that $m = o(\sqrt{n})$. Then
\begin{align*}
\E[\left(\alpha^q h_j( \langle \tau, x \rangle) - \hat\alpha^q h_j(\langle \hat{\tau}, x \rangle) \right)^2 \mid \hat\tau, \alpha, \hat\alpha] &= \mathcal{O}_p \left( j \sqrt{\frac{s \log(p)}{n}}  \right), \quad \forall q \in \{ 0, 1, 2\}.
\end{align*}
\end{lemma}
\begin{proof}
We first prove the equality for $q = 0$. Using Proposition \ref{non-unit-hermite-prop} and inequality (\ref{ineq:factorial-ratio}) we have
\begin{align*}
\bigl| \E[ h_j^2 (\langle \hat{\tau}, x \rangle) \mid \hat\tau]  -& \|\Sigma^\frac12 \hat{\tau} \|_2^{2j}  \bigr| =\left|  j! \sum_{\stackrel{0 \leq i \leq j -2}{i \equiv_2 j}}  \frac{ \|\Sigma^\frac12 \hat{\tau} \|_2^{2i}  \left(\frac{\| \Sigma^\frac12 \hat{\tau} \|_2^2 - 1}{2}\right)^{j-i} }{i! (\frac{j-i}{2})!^2 } \right| \\
\leq&  (1 \lor \|\Sigma \hat{\tau}\|_2^{2(j-2)} ) \left| \frac{\| \Sigma^\frac12 \hat{\tau} \|_2^2 - 1}{2}\right|^2 \sum_{\stackrel{0 \leq i \leq j-2}{i \equiv_2 j}}  \frac{ 16 j^{ j - i -2 } \left|\frac{\| \Sigma^\frac12 \hat{\tau} \|_2^2 - 1}{2}\right|^{j-2-i} }{(\frac{j-2-i}{2})! }\\
\leq& 16 (1 \lor \|\Sigma \hat{\tau}\|_2^{2(j-2)} ) \left| \frac{\| \Sigma^\frac12 \hat{\tau} \|_2^2 - 1}{2}\right|^2 \exp\left( j^2  \left| \frac{\| \Sigma^\frac12 \hat{\tau} \|_2^2 - 1}{2}\right|^2 \right)\\
=& \mathcal{O}_p\left( \frac{1}{n} \right).
\end{align*}
Similarly, using the second part of Proposition \ref{non-unit-hermite-prop} we have
\begin{align*}
\E[h_j(\langle \tau, x \rangle) h_j(\langle \hat{\tau}, x \rangle) \mid \hat{\tau}] &= \langle \Sigma^\frac12 \tau, \Sigma^\frac12 \hat{\tau} \rangle^j \\
&= \left( \langle \Sigma^\frac12\tau , \Sigma^\frac12 \tau \rangle + \langle  \Sigma^\frac12 \tau,  \Sigma^\frac12 (\hat{\tau} - \tau) \rangle \right)^j \\
&= \left( 1 + \mathcal{O}_p\left( \sqrt{\frac{s \log(p)}{n}} \right) \right)^j = 1 + \mathcal{O}_p \left( j \sqrt{\frac{s \log(p)}{n}}  \right).
\end{align*}
Putting together these approximations yields
\begin{align*}
\E[(h_j( \langle \tau, x \rangle) - h_j(\langle \hat{\tau}, x \rangle) )^2 \mid \hat{\tau} ] &= 1 + \| \Sigma^\frac12 \hat{\tau} \|_2^{2j} - 2 ( 1 +  \mathcal{O}_p \left( j \sqrt{\frac{s \log(p)}{n}}  \right) )\\
&=\mathcal{O}_p \left( j \sqrt{\frac{s \log(p)}{n}}  \right).
\end{align*}

To prove the result for $q = 1,2$, we use the result we just proved for $q = 0$. Add and subtract $\hat\alpha^q h_j(\langle \tau, x \rangle)$ and use the inequality $(a+b)^2 \leq 2a^2 + 2b^2$ to write
\begin{align*}
\E[\left(\alpha^q h_j( \langle \tau, x \rangle) - \hat\alpha^q h_j(\langle \hat{\tau}, x \rangle) \right)^2 \mid \hat\tau, \alpha, \hat\alpha] 
&\leq 2 \hat\alpha^{2q} \E[ \left(h_j( \langle \tau, x \rangle) - h_j(\langle \hat{\tau}, x \rangle) \right)^2]\\
&+ 2(\alpha^q - \hat\alpha^q)^2 \E[h_j^2(\langle \tau, x \rangle) \mid \hat{\tau}]\\
&= 2\hat{\alpha}^{2q} \mathcal{O}_p\left( j\sqrt{\frac{s \log(p)}{n}} \right) + 2 \mathcal{O}_p\left( \frac{s \log(p)}{n} \right)\\
&=  \mathcal{O}_p\left( j\sqrt{\frac{s \log(p)}{n}} \right).
\end{align*}
\end{proof}

Even though $\Sigma^\frac12 \hat{\tau}$ is not a unit vector, it is easy to see that $T = \| \hat{\Sigma}^\frac12 \hat{\beta} \|_2^2 / \| \Sigma^\frac12 \hat{\beta} \|_2^2 - 1$ has mean zero (given $\hat{\tau}$), and that $\E[T^2 \mid \hat{\tau}] \lesssim 1/ n$. Thus $T = \mathcal{O}_p(1/\sqrt{n})$ and therefore 
\begin{align}\label{tau-hat-norm}
\| \Sigma^\frac{1}{2} \hat{\tau} \|_2  = 1 + \mathcal{O}_p(1/\sqrt n). 
\end{align}
Furthermore, it can be shown as before that $\| \hat{\tau} - \tau \|_2$ has the same rate of convergence as $\| \hat{\beta} - \beta \|_2$, i.e. both errors are $\mathcal{O}_p( \sqrt{s \log(p) / n})$ (This follows from two applications of the triangle inequality and the assumption that $|\mu_1| = \| \Sigma^\frac12 \beta\|_2$ is bounded away from zero).

1. We start by calculating the mean and variance of $\mu_k$ for $k\geq 2$. 
The mean can be computed using Proposition \ref{non-unit-hermite-prop} and noting that $\Sigma^\frac12 \tau$ is a unit vector as follows:
\begin{align*}
\E [\hat{\mu}_k \mid \hat{\tau} ] &= \E [y_n h_k( \langle x_n, \hat{\tau} \rangle) \mid \hat{\tau}]\\
&= \sum_{j=0}^\infty \mu_j \E[ h_j(\langle \Sigma^\frac12 \tau, \Sigma^{-\frac12} x_i \rangle) h_k( \langle \Sigma^\frac12\hat{\tau}, \Sigma^{-\frac12} x_i \rangle) ]\\
&= \langle \Sigma^\frac12 \tau, \Sigma^\frac12 \hat{\tau} \rangle^k \mu_k +   \sum_{\stackrel{j \leq k - 2}{j \equiv_2 k}} \sqrt{\frac{k!}{j!}} \frac{ \langle \Sigma^\frac12 \tau, \Sigma^\frac12 \hat{\tau} \rangle^j \left(\frac{\| \Sigma^\frac12 \tau' \|_2^2 - 1}{2}\right)^\frac{k-j}{2}  }{(\frac{k-j}{2})!  } \mu_j.
\end{align*}
Therefore using Lemma \ref{inner-prod-bound-lemma} we have
\begin{align*}
|\E [\hat{\mu}_k \mid \hat{\tau} ] - \mu_k| &\leq | (\langle \Sigma^\frac12 \tau, \Sigma^\frac12 \hat{\tau} \rangle^k - 1) \mu_k|
\\ 
&+ (\max_j |\mu_j|) \left| \frac{\| \Sigma^\frac12 \tau' \|_2^2 - 1}{2} \right| (1 \lor | \langle \Sigma^\frac12 \tau, \Sigma^\frac12 \hat{\tau} \rangle|^k )  \exp\left(k \left|\frac{\| \Sigma^\frac12 \tau' \|_2^2 - 1}{2}\right| \right).
\end{align*}
Note that the last sum in the right-hand side of above inequality is of order $\exp(\mathcal{O}_p(k/\sqrt{n})) = \mathcal{O}_p(1)$.
Since $\| \Sigma^\frac12 \hat{\tau} \|_2 = 1 + \mathcal{O}_p(1/\sqrt{n})$ and $\| \Sigma^\frac{1}{2} \tau \|_2 = 1$, it follows that 
\begin{align}
\left| \langle \Sigma^\frac12 \tau, \Sigma^\frac12 \hat{\tau} \rangle^k - 1 \right| &\leq \left|  \frac{1}{2} \left( \| \Sigma^\frac12 \tau \|_2^2 + \| \Sigma^\frac12 \hat{\tau} \|_2^2 - \| \Sigma^\frac12 (\tau - \hat{\tau} )\|_2^2 \right)^k - 1 \right|\\
&\leq  \frac{k}{2} (1 -   \| \Sigma^\frac12 \hat{\tau} \|_2^2 + \| \Sigma^\frac12 (\tau - \hat{\tau} )\|_2^2 ) \quad (\text{with high probability})\\
&= \mathcal{O}_p\left(k\sqrt{\frac{s\log(p)}{n}} \right),
\end{align}
where the second inequality follows from $1 - (1 - \theta)^k \leq k \theta$ for $\theta \in [0,1]$. Note that by Lemma \ref{decay} we have $|\mu_j| \leq L$ for all $j \geq 1$ since by assumption $\| g' \|_{L_2(N(0,1))} \leq L$.
Putting together these inequalities yields
\begin{align*}
|\E [\hat{\mu}_k \mid \hat{\tau} ] - \mu_k| =  \mathcal{O}_p\left(L k\sqrt{\frac{s\log(p)}{n}}+ L \frac{1}{\sqrt{n}}  \right).
\end{align*}
Next we turn to the variance of $\hat{\mu}_k$. Use the Cauchy-Schwarz inequality to write
\begin{align*}
\V(\hat{\mu}_k \mid \hat{\tau} ) &\lesssim \frac{1}{n} \E[ y_n^2 h_k^2( \langle x_n, \hat{\tau} \rangle)] \leq \frac{1}{n} \sqrt{\E [y_n^4]}  \cdot \sqrt{\E[ h_k^4(\langle x_n, \hat{\tau} \rangle)]}. 
\end{align*}
Since\footnote{See the first remark following Theorem \ref{thm-unknown-sigma} for details.} $\| y_n\|_{\psi_2} \lesssim \| z_n \|_{\psi_2} \leq \sigma_z <\infty$ with $\sigma_z$ bounded away from $+\infty$ by assumption, the fourth moment of $y_n$ is also uniformly (in $n$) bounded. Thus we need to bound the fourth moment of $h_k(\langle x_n, \hat{\tau} \rangle )$. Since $\langle x_n, \hat{\tau} \rangle$ is not a standard normal random variable, we can not directly use Proposition \ref{hermite-prop} to bound the moments of $h_k(\langle x_n, \hat{\tau} \rangle)$. However, it is possible to relate these moments to those of $h_k(\xi)$ for $\xi \sim N(0,1)$ as follows. Let us write $\omega^2 = \| \Sigma^\frac12 \hat{\tau} \|_2^2 $ for the conditional variance of $\langle x_n, \hat{\tau} \rangle$ given $\hat{\tau}$. For large $n$ we have shown that $\omega^2$ is close to one with high probability. On the high-probability event $[\omega^2 < 1.5]$ we can write
\begin{align*}
\E[ h_k^4(\langle x_n, \hat{\tau} \rangle)] &= \frac{1}{\sqrt{2\pi}\omega} \int_{-\infty}^{+\infty} h_k^4( \xi) e^\frac{-\xi^2}{2\omega^2} \diff \xi\\
&= \frac{1}{\sqrt{2\pi}\omega} \int_{-\infty}^{+\infty} \left( h_k^4( \xi) e^\frac{-\xi^2}{4} \right) \left( e^{\frac{-\xi^2}{2 \omega^2} +\frac{\xi^2}{4} } \right) \diff \xi\\
&\leq  \frac{1}{\sqrt[4]{2\pi}\omega} \left(  \int_{-\infty}^{+\infty}  h_k^8( \xi) \frac{e^\frac{-\xi^2}{2}}{\sqrt{2\pi}} \diff \xi \right)^\frac12 \cdot  \left( \int_{-\infty}^{+\infty}  e^{-\frac{\xi^2}{2}\left( \frac{2}{\omega^2} - 1 \right)} \diff \xi \right)^\frac12\\
&= (2 - \omega^2)^{-\frac{1}{2}}\| h_k \|_{L_8(N(0,1))}^4,
\end{align*}
where the inequality uses the Cauchy-Schwarz inequality. By the fifth part of Proposition \ref{hermite-prop}, the $L_8$ norms of Hermite polynomials satisfy $\| h_k \|_{L_8} \lesssim 7^k / \sqrt[4]{k}$. Putting together the last two displays, we obtain (with high probability)
\begin{align*}
\V(\hat{\mu}_k \mid \hat{\tau} ) &\leq \frac{1}{n} C_y^2 \cdot (2 - \omega^2)^{-\frac{1}{4}} \| h_k \|_{L_8}^2 = \mathcal{O}_p\left( \frac{7^{2k}}{n\sqrt{k}} \right).
\end{align*}
To summarize our calculations, we have
\begin{align}\label{hermite-coef-bias-variance}
|\E [\hat{\mu}_k \mid \hat{\tau} ] - \mu_k| =  \mathcal{O}_p\left(L k\sqrt{\frac{s\log(p)}{n}}\right) \quad \text{and} \quad \V(\hat{\mu}_k \mid \hat{\tau} ) = \mathcal{O}_p\left( \frac{7^{2k}}{n\sqrt{k}} \right).
\end{align}
Given the (conditional) bias and variance of the $\mu_k$'s, we next consider the various terms in the following error decomposition of $\tilde{\beta}_1$:
\begin{align*}
\sqrt{n} (\frac{1}{n} \sum_{i=1}^n \hat r_i x_{i1})(\tilde{\beta}_1 - \beta_1) &= \frac{1}{\sqrt{n }} \sum_{i=1}^n \hat r_i e_i  \tag{A}\\
&+ \frac{1}{\sqrt{n }} \sum_{i=1}^n \hat r_i (\mu_0 - \hat{\mu}_0) \tag{B} \\
&+ \frac{1}{\sqrt{n }} \sum_{i=1}^n \hat r_i  \langle \mu_1\tau_{-1} - \check{\beta}_{-1}, x_{i,-1}\rangle \tag{C} \\
&+\frac{1}{\sqrt{n }} \sum_{i=1}^n \hat r_i  \sum_{j=2}^m \mu_j [h_j(\langle \tau, x_i \rangle) - h_j(\langle \hat{\tau}, x_i \rangle)] \tag{D}\\
&+\frac{1}{\sqrt{n }} \sum_{i=1}^n \sum_{j=2}^m \hat r_i(\mu_j - \hat{\mu}_j) h_j(\langle \hat{\tau}, x_i \rangle) \tag{E}\\
&+ \frac{1}{\sqrt{n }} \sum_{i = 1}^n \hat r_i\sum_{j = m+1}^\infty \mu_j h_j(\langle \tau, x_i \rangle). \tag{F}
\end{align*}

 \textbf{Step 1: (A). } The first term $A$ can be shown to converge in distribution to $N(0,\E[r_n^2 e_n^2])$ using the same argument as in the proof of Theorem \ref{thm-unknown-sigma}. More precisely, we have
 \begin{align*}
  A = \frac{1}{\sqrt{n }} \sum_{i=1}^n \hat r_i e_i = \sqrt{\E[r_n^2 e_n^2]} W_n,  \text{ where } W_n \rightarrow_d N(0,1).
 \end{align*}
 
 \textbf{Step 2: (B).} Note that $\E[\hat{r}_i \mid \hat{\gamma} ] = 0$. Let $\tilde{\gamma}^T := (1, - \hat{\gamma}^T)$ and $\tilde{\omega}^T:= (1, -\gamma^T) = \E[r_n^2] \Omega_1^T$ where $\Omega_1$ is the first column of $\Sigma^{-1}$. We have
 \begin{align*}
 \V[ \frac{1}{\sqrt{n}} \sum_{i=1}^n \hat{r}_i \mid \hat{\gamma} ] &= \E[ \langle \tilde{\gamma}, x_n \rangle^2 \mid \tilde{\gamma}]\\
 &= \| \Sigma^\frac12  \tilde{\gamma} \|_2^2
 \end{align*}
Use the triangle inequality and the inequality $\E[r_n^2] \leq \lambda_{\max}(\Sigma)$ from Lemma \ref{residual_prop}  to write
\begin{align*}
 \| \Sigma^\frac12  \tilde{\gamma} \|_2&\leq  \| \Sigma^\frac12 \tilde{\omega} \|_2 + \| \Sigma^\frac12 (\tilde{\gamma} - \tilde{\omega}) \|_2\\
 &\leq \frac{1}{\sqrt{\lambda_{\min}(\Sigma)}} \| \Sigma \tilde{\omega} \|_2 + 
 \sqrt{\lambda_{\max}(\Sigma)} \| \hat{\gamma} - \gamma \|_2 \\
 &\lesssim \frac{\lambda_{\max}(\Sigma) }{\sqrt{\lambda_{\min}(\Sigma)}} + 
 \sqrt{\lambda_{\max}(\Sigma)} \sqrt{ \frac{s \log(p)}{n}},
 \end{align*}
which is uniformly bounded above. This shows that $\sum_1^n \hat{r}_i / \sqrt{n} = \mathcal{O}_p(1)$. Since $y_n$ has a finite fourth moment by assumption, it follows that $\hat{\mu}_0 - \mu_0 = \mathcal{O}_p(1/\sqrt{n})$. Therefore $B = \mathcal{O}_p(1/\sqrt{n})$ as well.

 \textbf{Step 3: (C).} Note that since $\mu_1 \tau = \beta$, this term is shown to be $o_p(1)$ in the proof of Theorem \ref{thm-unknown-sigma}. (Note that $\check{\beta}$ was computed on the first subsample, similarly to how $\hat{\beta}$ was computed in the setting of Theorem \ref{thm-unknown-sigma}.)

\textbf{Step 4: (E).} Let $\bar{\gamma} = (1, -\gamma^T)^T$ and $\tilde{\gamma} = (1, -\hat\gamma^T)^T$.
Note that since $\| \tilde{\gamma} - \gamma \|_2 \rightarrow_p 0$, the event $H = [\| \tilde{\gamma} - \gamma \|_2 < 1]$ has probability converging to 1. Therefore, we have $E = o_p(1)$ if and only if $E \mathbf{1}_H = o_p(1)$. Further more, on $H$ we have
\begin{align*}
\sqrt{\E[ \hat{r}_n^2 \mid \mathcal{S}_2] \mathbf{1}_H}  &= \| \Sigma^\frac12 \tilde{\gamma} \|_2 \mathbf{1}_H \\
&\leq \| \Sigma^\frac12 \bar{\gamma} \|_2+ \| \Sigma^\frac12 (\tilde{\gamma} - \bar{\gamma})\|_2 \mathbf{1}_H \\
&\leq \E[r_n^2]
+ \lambda_{\max}(\Sigma)^\frac12 \\
&\leq \lambda_{\max}(\Sigma)  + \lambda_{\max}(\Sigma)^\frac12 
\end{align*}
which is uniformly bounded above. 

We can write
\begin{align*}
E' := E \mathbf{1}_H = \frac{ \mathbf{1}_H}{\sqrt{n }} \sum_{i=1}^n \sum_{j=2}^m \hat{r}_i(\mu_j - \hat{\mu}_j) h_j(\langle \hat{\tau}, x_i \rangle)
\end{align*}
The conditional mean of $E'$ is given by
\begin{align*}
\E[E' \mid \mathcal{S}_2] &= \sqrt{n}  \mathbf{1}_H \sum_{j=2}^m (\mu_j - \hat{\mu}_j) \E[ \hat{r}_n h_j(\langle \hat{\tau}, x_n \rangle) \mid \mathcal{S}_2]\\
 &=\langle \Sigma^\frac12 \tilde{\gamma} ,  \Sigma^\frac12 \hat{\tau} \rangle   \mathbf{1}_H   \sqrt{n}   \sum_{\stackrel{3 \leq j \leq m}{j \equiv_2 1}}  \sqrt{j!} \left( \frac{ \| \Sigma^\frac12 \hat{\tau} \|_2^2 - 1}{2} \right)^{\frac{j-1}{2}}  \left( \left(\frac{j - 1}{2}\right)! \right)^{-1}  (\mu_j - \hat{\mu}_j) .
\end{align*}
Using the tower property of conditional expectations and the independence of $\hat{\gamma}$ an $\hat{\mu}_j$, 
\begin{align*}
\E&[E' \mid \hat{\tau} ] =\E[ \E[ E' \mid \mathcal{S}_2] \mid \hat{\tau}]\\
  &=\E[\langle \Sigma^\frac12 \tilde{\gamma} ,  \Sigma^\frac12 \hat{\tau} \rangle   \mathbf{1}_H \mid \hat{\tau}]   \sqrt{n}   \sum_{\stackrel{3 \leq j \leq m}{j \equiv_2 1}}  \sqrt{j!} \left( \frac{ \| \Sigma^\frac12 \hat{\tau} \|_2^2 - 1}{2} \right)^{\frac{j-1}{2}} \left(\frac{j - 1}{2}\right)!^{-1} \E[ (\mu_j - \hat{\mu}_j) \mid \hat{\tau}]\\
 &= \mathcal{O}_p\left( \sqrt{n}    \sum_{3 \leq j \leq m, j \equiv_2 1}  \sqrt{j!} \left(\frac{1}{\sqrt{n}}\right)^{\frac{j-1}{2}} \left(\left(\frac{j - 1}{2}\right)!\right)^{-1} \left(j \sqrt{\frac{s \log(p)}{n}} \right)
\right)
\end{align*}
Using the inequality $j!  \lesssim m^{j-3}$, we can upper bound the sum by an exponential:
\begin{align*}
\sqrt{n}    \sum_{3 \leq j \leq m, j \equiv_2 1}  j \sqrt{j!} \left(\frac{1}{\sqrt{n}}\right)^{\frac{j-1}{2}} \left(\frac{j - 1}{2}\right)!^{-1}  & \lesssim \sum_{j=3}^m  \left(\frac{m}{\sqrt{n}}\right)^{\frac{j-3}{2}} \left( \left(\frac{j - 3}{2}\right)!\right)^{-1}  \leq e^\frac{m}{\sqrt{n}}.
\end{align*}
It follows that $\E[E' \mid \hat{\tau}] = \mathcal{O}_p( \sqrt{s \log(p) / n} )$.

Next consider the conditional variance of $E'$. Using the tower property of conditional expectations and the inequality $(\sum_1^m a_j)^2 \leq m \sum_1^m a_j^2$, we have
\begin{align*}
\V( E' \mid \hat{\tau}) &\leq \E\left[   \E\left[ \hat{r}_n^2 \left( \sum_{j=2}^m (\mu_j - \hat{\mu}_j) h_j(\hat{\xi}) \right)^2   \mathbf{1}_H \given  \mathcal{S}_2\right] \given \hat{\tau} \right] \\
&\leq  \E\left[   m \sum_{j=2}^m (\mu_j - \hat{\mu}_j)^2  \E\left[ \hat{r}_n^2 h_j^2(\hat{\xi})    \given  \mathcal{S}_2\right]    \mathbf{1}_H \given \hat{\tau} \right].
\end{align*}
Write $\hat{\xi}_n = \hat{\alpha} \hat{r}_n + \hat{u}_n$ where $\hat{u}_n$ is conditionally independent of $\hat{r}_n$ given $\mathcal{S}_2$. The inner expectation can be rewritten via Stein's lemma as follows
\begin{align*}
\E\left[ \hat{r}_n^2 h_j^2(\hat{\xi}) \given  \mathcal{S}_2\right]  &  \mathbf{1}_H=
 \E[\hat{r}_n^2 \mid \mathcal{S}_2] \E[ h_j^2(\hat{\xi}_n) \mid \mathcal{S}_2]   \mathbf{1}_H+
2(\E[\hat{r}_n^2 \mid \mathcal{S}_2])^2 \E[ (h_j'(\hat{\xi}_n))^2 \mid \mathcal{S}_2]   \mathbf{1}_H \\
&+ 2(\E[\hat{r}_n^2 \mid \mathcal{S}_2])^2 \E[ h_j(\hat{\xi}_n)h_j''(\hat{\xi}_n) \mid \mathcal{S}_2]   \mathbf{1}_H \\
&\lesssim \E[ h_j^2(\hat{\xi}_n) \mid \hat{\tau}]+
 j \hat{\alpha}^2 \E[ (h_{j-1}^2(\hat{\xi}_n) \mid \hat{\tau}] + \sqrt{j(j-1)}  \hat{\alpha}^2 \E[ h_j(\hat{\xi}_n)h_{j-2}(\hat{\xi}_n) \mid \hat{\tau}]\\
 &\lesssim
  \E[ h_j^2(\hat{\xi}_n) \mid \hat{\tau}]+
 j \| \Sigma^\frac12 \hat{\tau} \|_2^2 \left(\E[ (h_{j-1}^2(\hat{\xi}_n) \mid \hat{\tau}] + \E[ h_j(\hat{\xi}_n)h_{j-2}(\hat{\xi}_n) \mid \hat{\tau}] \right)
\end{align*}
where the first inequality uses the boundedness of $\E[\hat{r}_n^2 \mid \mathcal{S}_2]$ on $H$ and the second inequality uses the following upper bound on $\hat{\alpha}^2$:
\begin{align*}
\hat{\alpha}^2  =  \left( \frac{\tilde{\gamma}^T \Sigma \hat{\tau}}{\tilde{\gamma}^T \Sigma \tilde{\gamma} } \ \right)^2\leq   \frac{ \| \Sigma^\frac12 \tilde{\gamma} \|_2^2 \| \Sigma^\frac12 \hat{\tau} \|_2^2 }{ \| \Sigma^\frac12 \tilde{\gamma} \|_2^4 }  \leq   \frac{ \| \Sigma^\frac12 \hat{\tau} \|_2^2  }{ \lambda_{\max}(\Sigma) } \lesssim \| \Sigma^\frac12 \hat{\tau} \|_2^2 
\end{align*}
Plugging these back into the upper bounded on the conditional variance of $E'$ yields the following $\lesssim$-upper bound on $\V[E' \mid \hat{\tau}]$:
\begin{align*}
m \sum_{j=2}^m  \E\left[ (\mu_j - \hat{\mu}_j)^2    \given \hat{\tau} \right] 
\left[ \E[ h_j^2(\hat{\xi}_n) \mid \hat{\tau}] +
 j \| \Sigma^\frac12 \hat{\tau} \|_2^2 \left(\E[ (h_{j-1}^2(\hat{\xi}_n) + h_j(\hat{\xi}_n)h_{j-2}(\hat{\xi}_n) \mid \hat{\tau}] \right) \right].
 \end{align*}
Next we find bounds on each of these terms. Use Proposition \ref{non-unit-hermite-prop} to write
\begin{align*}
\left| \E[h_j^2(\hat{\xi})\mid \hat{\tau}]  - 1 \right|&= \left| -1 +  j! \sum_{\stackrel{0 \leq i \leq j}{i \equiv_2 j}} \frac{ \| \Sigma^\frac12 \hat{\tau} \|_2^{2i} \left( \frac{\| \Sigma^\frac12 \hat{\tau} \|_2^2 - 1}{2} \right)^{j-i}}{i! \left( \frac{j - i}{2} \right)!^2} \right|\\
\leq&  \left| \| \Sigma^\frac12 \hat{\tau} \|_2^{2j} - 1 \right| +  \left( 1 \lor \| \Sigma^\frac12 \hat{\tau} \|_2^{2j}  \right)     \sum_{\stackrel{0 \leq i \leq j-2}{i \equiv_2 j}} \frac{ j! \left| \frac{\| \Sigma^\frac12 \hat{\tau} \|_2^2 - 1}{2} \right|^{j-i}}{i! \left( \frac{j  - i}{2} \right)!^2}\\
\leq&   \left| \| \Sigma^\frac12 \hat{\tau} \|_2^{2j} - 1 \right| +  \left( 1 \lor \| \Sigma^\frac12 \hat{\tau} \|_2^{2j}  \right)   
 \left| \frac{\| \Sigma^\frac12 \hat{\tau} \|_2^2 - 1}{2} \right|^2 \exp\left( j^2  \left| \frac{\| \Sigma^\frac12 \hat{\tau} \|_2^2 - 1}{2} \right|^2 \right)\\
 =& \mathcal{O}_p\left( \frac{j}{\sqrt{n}} \right)
\end{align*}
where we used Lemma \ref{inner-prod-bound-lemma} to obtain the last inequality. A similar argument and using Proposition \ref{non-unit-hermite-prop} and Lemma \ref{inner-prod-bound-lemma} shows that  $\E[h_j(\hat{\xi}_n) h_{j-2}(\hat{\xi}_n)\mid \hat{\tau} ] = \mathcal{O}_p( 1/ n )$. Using the bias-variance result \ref{hermite-coef-bias-variance} for $\hat{\mu}_j$ we have
\begin{align*}
\E\left[ (\mu_j - \hat{\mu}_j)^2    \given \hat{\tau} \right]  = \E\left[ ( \hat{\mu}_j - \E[ \hat{\mu}_j \mid \hat{\tau}] )^2    \given \hat{\tau} \right]  + ( \E[ \mu_j \mid \hat{\tau} ] - \mu_j)^2 = 
\mathcal{O}_p \left( \frac{ 7^{2j}}{n \sqrt{j} } + j^2 \frac{s \log(p)}{n} \right).
\end{align*}
Plugging these back into the upper bound on the conditional variance of $E'$ yields
\begin{align*}
\V[E' \mid \hat{\tau}] &= \mathcal{O}_p\left( m \sum_{j=2}^m \left(\frac{ 7^{2j}}{n \sqrt{j} } + j^2 \frac{s \log(p)}{n}\right) 
\left( 1 + \frac{j}{\sqrt{n} }+ j (1 + \frac{j}{\sqrt{n}}) + \frac{j}{n} \right) \right)\\
&= \mathcal{O}_p\left(  m \sum_{j=2}^m \frac{  7^{3j}}{n } + j^3 \frac{s \log(p)}{n} \right) = \mathcal{O}_p\left( \frac{m 7^{3m}}{n} + \frac{m^4 s \log(p)}{n} \right),
\end{align*}
which is negligible for $m = \lfloor \log^\frac{1}{4}(n) \rfloor$ since $m^4 \leq \log(n) \leq \log(p)$.

\textbf{Step 5: (D).} 
To compute the bias and variance of $D$ we consider the cases $j=3$ and $j\neq 3$ separately as they have different rates and require different approaches. 
Let us write $\tilde{\gamma}^T = (1, -\hat{\gamma}^T)$ so that $\hat{r}_i = \langle \tilde{\gamma}, x_i \rangle$ and $\E [\hat{r}_i^2 \mid \mathcal{S}_2] = \|\Sigma^\frac12 \tilde{\gamma} \|_2^2$. Also define
\begin{align*}
D^{(3)} = \frac{\mu_3}{\sqrt{n }} \sum_{i=1}^n \hat r_i  [h_3(\xi_i) - h_3(\hat{\xi}_i)] .
\end{align*}
First we show that $D^{(3)} - \E[D^{(3)} \mid \mathcal{S}_2] = o_p(1)$ by showing that its second conditional moment $\E[(D^{(3)} - \E[D^{(3)} \mid \mathcal{S}_2] )^2 \mid \mathcal{S}_2] =\V[D^{(3)} \mid \mathcal{S}_2]$ is  $o_p(1)$. 

Write $\xi_n = \alpha \hat{r}_n + u_n$ and $\hat{\xi}_n = \hat{\alpha} \hat{r}_n + \hat{u}_n$ where 
\begin{align*}
\alpha = \frac{\tau^T \Sigma \tilde{\gamma}}{\tilde{\gamma}^T \Sigma \tilde{\gamma}} , \text{ and } \, \hat\alpha = \frac{\hat\tau^T \Sigma \tilde{\gamma}}{\tilde{\gamma}^T \Sigma \tilde{\gamma}}.
\end{align*}
With this choice of $\alpha, \hat\alpha$ it is easy to see that $u_n, \hat u_n$ are conditionally independent of $\hat r_n$ given $\mathcal{S}_2$. Also both $\alpha$ and $\hat{\alpha}$ are $\mathcal{O}_p(1)$ as by the Cauchy-Schwarz inequality we have
\begin{align*}
|\alpha| \leq \frac{\|\Sigma^\frac12 \tilde{\gamma}\|_2 \| \Sigma^\frac12 \tau \|_2}{\|\Sigma^\frac12 \tilde{\gamma} \|_2^2} \leq \frac{1}{ \lambda_{\min}(\Sigma)^{\frac12} \| \tilde{\gamma} \|_2} \leq \lambda_{\min}(\Sigma)^{-\frac12},
\end{align*}
since the first coordinate of $\tilde{\gamma}$ is $1$. A similar argument shows that $\hat{\alpha} = \mathcal{O}_p(1)$ and furthermore, that $|\alpha - \hat\alpha | \lesssim \| \tau - \hat\tau \|_2 = \mathcal{O}_p(\sqrt{s \log(p)/n})$.

Using Stein's lemma twice after conditioning on $\mathcal{S}_2, u_n, \hat u_n$ and using the tower property of conditional expectations, we obtain
\begin{align*}
\E[ \hat r_n^2 (h_3(\xi_n) - & h_3(\hat{\xi}_n) )^2 \mid \mathcal{S}_2]  = \E[r_n^2 \mid \mathcal{S}_2] \E[ (h_3(\xi_n) - h_3(\hat{\xi}_n) )^2 \mid \mathcal{S}_2]\\
&+2(\E[r_n^2 \mid \mathcal{S}_2])^2  \E[ (\sqrt{3} \alpha h_2(\xi_n) - \sqrt{3} \hat \alpha h_2(\hat{\xi}_n) )^2 \mid \mathcal{S}_2]\\
&+2(\E[r_n^2 \mid \mathcal{S}_2])^2 \E[ (h_3(\xi_n) - h_3(\hat{\xi}_n) )(\sqrt{6} \alpha^2  h_1(\xi_n) - \sqrt{6} \hat\alpha^2 h_1(\hat{\xi}_n) ) \mid \mathcal{S}_2]
\end{align*}
The first two summands are $\mathcal{O}_p( \sqrt{s \log(p) / n})$ by Lemma \ref{lemma:mean-squares}. The third term is also seen to have the same order of magnitude after applying the inequality $2ab \leq a^2 + b^2$ and using Lemma \ref{lemma:mean-squares}. Thus showing that $\V[D^{(3)} \mid \mathcal{S}_2] = o_p(1)$.

Thus $D^{(3)}$ has negligible conditional variance, and therefore we have $D^{(3)} = \E[D^{(3)} \mid \mathcal{S}_2] 
+ o_p(1)$. Next we consider $ \E[D^{(3)} \mid \mathcal{S}_2]$.
Using the second part of Proposition \ref{non-unit-hermite-prop},
\begin{align*}
\E[D^{(3)} \mid \mathcal{S}_2 ] &= \mu_3 \|\Sigma^\frac12 \tilde{\gamma} \|_2    \sqrt{n}
 \E\left[  \frac{\langle \tilde{\gamma} , x_n \rangle }{\|\Sigma^\frac12 \tilde{\gamma} \|_2} \left[h_3(\xi_n) - h_3(\hat{\xi}_n)\right] \given \mathcal{S}_2 \right]  \\
&= -\mu_3 \|\Sigma^\frac12 \tilde{\gamma} \|_2   \sqrt{n} \E\left[  \frac{\langle \tilde{\gamma} , x_n \rangle }{\|\Sigma^\frac12 \tilde{\gamma} \|_2}  h_3(\hat{\xi}_n) \given \mathcal{S}_2 \right] \\
&=  -\mu_3   \sqrt{n} \left( \sqrt{3!} \langle  \Sigma^\frac12 \tilde{\gamma},  \Sigma^\frac12 \hat{\tau} \rangle  \left( \frac{\|\Sigma^\frac12 \hat{\tau} \|_2^2 - 1}{2}  \right) \right).
\end{align*}
First note that conditional on $\hat{\beta}$, the inverse of $\|\Sigma^\frac12 \hat{\tau} \|_2^2$ has a scaled $\chi^2_{\lfloor n/3 \rfloor}$ distribution:
\begin{align*}
\| \Sigma^\frac{1}{2} \hat{\tau} \|_2^{-2} &= \frac{ \| \hat{\Sigma}^\frac12 \hat{\beta} \|_2^2}{\| \Sigma^\frac12 \hat{\beta}\|_2^2} \mid \hat{\beta} \sim \frac{\chi^2_{\lfloor n/3 \rfloor}}{\lfloor n / 3 \rfloor}
\end{align*}
Using a conditional central limit theorem \citep{Bulinski2017ConditionalCL}, we have
\begin{align*}
\sqrt{\lfloor n/3 \rfloor} (\| \Sigma^\frac12 \hat{\tau} \|_2^{-2} - 1) \rightarrow_d N(0,2).
\end{align*}
Use the delta method on the function $u \mapsto u^{-1}$ and the above asymptotic result gives
\begin{align}\label{D3CLT}
\sqrt{n} (\| \Sigma^\frac12 \hat{\tau} \|_2^{2} - 1) \rightarrow_d N(0,\frac{2}{3}).
\end{align}

Next let us consider the multiplier $ \langle  \Sigma^\frac12 \tilde{\gamma},  \Sigma^\frac12 \hat{\tau} \rangle $. Recall that $(1 , -\gamma^T)^T = \Omega_1 \E r_n^2$ where $\Omega_1$ is the first column of $\Omega = \Sigma^{-1}$.  We can write
\begin{align*}
\left| \langle  \Sigma^\frac12 \tilde{\gamma},  \Sigma^\frac12 \hat{\tau} \rangle -   \frac{\hat{\beta}_1}{ \| \hat{\Sigma}^\frac{1}{2} \hat{\beta} \|_2} \E r_n^2 \right|&=\left| \langle  \Sigma^\frac12 \tilde{\gamma},  \Sigma^\frac12 \hat{\tau} \rangle - \langle \Sigma \Omega_1, \hat{\tau} \rangle \E r_n^2  \right| \\
&=  \left| \langle   \tilde{\gamma} - (1 , -\gamma^T)^T, \Sigma \hat{\tau} \rangle  \right| \\
&\leq \| \hat{\gamma} - \gamma \|_2 \cdot \| \Sigma \hat{\tau} \|_2  = o_p(1).
\end{align*}
On the other hand, we have
\begin{align*}
\left| \frac{\hat{\beta}_1}{ \| \hat{\Sigma}^\frac12 \hat{\beta} \| } - \frac{ \beta_1}{ \| \Sigma^\frac12 \beta \|_2}  \right|&\leq 
\left| \frac{\hat{\beta}_1}{ \| \hat{\Sigma}^\frac12 \hat{\beta} \| } - \frac{ \hat\beta_1}{ \| \Sigma^\frac12 \hat\beta \|_2}  \right| + 
\left| \frac{\hat{\beta}_1}{ \| \Sigma^\frac12 \hat{\beta} \|_2 } - \frac{ \hat\beta_1}{ \| \Sigma^\frac12 \beta \|_2}  \right| +
\left| \frac{\hat{\beta}_1}{ \| \Sigma^\frac12 \beta \|_2 } - \frac{ \beta_1}{ \| \Sigma^\frac12 \beta \|_2}  \right|\\
& \leq \frac{|\hat{\beta}_1|    \| \Sigma^\frac12 \hat{\tau}\|_2 - 1}{\| \hat{\beta} \|_2 \sqrt{\lambda_{\min}(\Sigma)} } + 
\frac{ |\hat{\beta}_1 |}{\| \hat{\beta} \|_2 \lambda_{\min}(\Sigma^\frac12)  } \frac{ \lambda_{\max}(\Sigma^\frac12) \| \hat{\beta} - \beta \|_2  }{\| \Sigma^\frac12 \beta \|_2} 
+ \frac{| \hat{\beta}_1 - \beta_1 |}{ \| \Sigma^\frac12 \beta \|_2}\\
&= o_p(1),
\end{align*}
since by assumption $|\mu_1| = \| \Sigma^\frac12 \beta \|_2$ is bounded away from zero.
The last two displays together imply that $ \langle  \Sigma^\frac12 \tilde{\gamma},  \Sigma^\frac12 \hat{\tau} \rangle  = \beta_1 / |\mu_1| + o_p(1)$. Using this and the asymptotic distribution (\ref{D3CLT}) we obtain
\begin{align*}
D^{(3)} = \E[D^{(3)} \mid \mathcal{S}_2] + o_p(1) = \eta_n \cdot Z_n + o_p(1), \text{ where } Z_n \rightarrow_d N\left(0, 1 \right) \text{ and } \eta_n^2 = \frac{ \mu_3^2   \beta_1^2 }{ \mu_1^2  }.
\end{align*}
Note that since the term $\| \Sigma^\frac12 \hat{\tau} \|_2^2 - 1$ is independent of the term (A) in the error expansion of $\tilde{\beta}_1$, we can also take $Z_n$ to be independent of $(A)$ (modulo an $o_p(1)$ term) .  It then follows that 
\begin{align*}
\left(\E[r_n^2 e_n^2] + \frac{\mu_3^2 \beta_1^2}{\mu_1^2} \right)^{-\frac12} \left( A + D^{(3)}  \right) \rightarrow_d N(0,1).
\end{align*}

Next we consider $D' = D - D^{(3)}$. Note that $\V[ \hat{r}_n / \|\Sigma^\frac12 \tilde{\gamma}  \|_2 \mid \mathcal{S}_2] =1$. Using Proposition \ref{non-unit-hermite-prop}, the conditional mean is given by
\begin{align*}
| \E[D' \mid \mathcal{S}_2] | &= \sqrt{n} \|\Sigma^\frac12 \tilde{\gamma}  \|_2   \left| \sum_{\stackrel{j=2}{j\neq 3}}^m \mu_j \E\left[ \frac{ \hat{r}_n}{ \| \Sigma^\frac12 \tilde{\gamma}\|_2} (h_j(\xi_n) - h_j(\hat{\xi}_n)) \given \mathcal{S}_2 \right] \right| \\
&=   \sqrt{n}  \left| \sum_{\stackrel{5 \leq j \leq m}{j \equiv_2 1}} \mu_j  \sqrt{j!} \langle \Sigma^\frac12 \tilde{\gamma} ,  \Sigma^\frac12 \hat{\tau} \rangle \left( \frac{ \| \Sigma^\frac12 \hat{\tau} \|_2^2 - 1}{2} \right)^{\frac{j-1}{2}}  \left( \left(\frac{j - 1}{2}\right)! \right)^{-1} \right|\\
&\leq \sqrt{n} ( \max_{j\geq 5} |\mu_j | ) \| \Sigma^\frac12 \tilde{\gamma} \|_2 \| \Sigma^\frac12 \hat{\tau} \|_2  
\left| \frac{ \| \Sigma^\frac12 \hat{\tau} \|_2^2 - 1}{2} \right|^2
\sum_{\stackrel{5 \leq j \leq m}{j \equiv_2 1}}  \sqrt{j!} \frac{ \left| \frac{ \| \Sigma^\frac12 \hat{\tau} \|_2^2 - 1}{2} \right|^{\frac{j-5}{2}} }{ \left( \left(\frac{j - 1}{2}\right)! \right) }.
\end{align*}
Using the inequality $j! / [(j-1)^2 (j-3)^2] \lesssim m^{j-5}$ for $5 \leq j \leq m$, we obtain
\begin{align*}
\sum_{\stackrel{5 \leq j \leq m}{j \equiv_2 1}}  \sqrt{j!} \frac{ \left| \frac{ \| \Sigma^\frac12 \hat{\tau} \|_2^2 - 1}{2} \right|^{\frac{j-5}{2}} }{ \left( \left(\frac{j - 1}{2}\right)! \right) } &\lesssim \sum_{\stackrel{5 \leq j \leq m}{j \equiv_2 1}} m^{\frac{j-5}{2}} \frac{ \left| \frac{ \| \Sigma^\frac12 \hat{\tau} \|_2^2 - 1}{2} \right|^{\frac{j-5}{2}} }{ \left( \left(\frac{j - 5}{2}\right)! \right) } \leq \exp\left( m \left| \frac{ \| \Sigma^\frac12 \hat{\tau} \|_2^2 - 1}{2} \right|  \right).
\end{align*}
Thus we have
\begin{align*}
| \E[D' \mid \mathcal{S}_2] |  &\lesssim \sqrt{n} ( \max_{j\geq 5} |\mu_j | ) \| \Sigma^\frac12 \tilde{\gamma} \|_2 \| \Sigma^\frac12 \hat{\tau} \|_2  
\left| \frac{ \| \Sigma^\frac12 \hat{\tau} \|_2^2 - 1}{2} \right|^2
\exp\left( m \left| \frac{ \| \Sigma^\frac12 \hat{\tau} \|_2^2 - 1}{2} \right|  \right)\\
&= \mathcal{O}_p\left( \sqrt{n} \frac{1}{n} \exp\left(\frac{m}{\sqrt{n}} \right) \right) = \mathcal{O}_p\left( \frac{1}{\sqrt{n}} \right).
\end{align*}

Next consider the the conditional variance of $D'$. Using the inequality $(\sum_1^m a_j)^2\leq m \sum_1^m a_j^2$ we can write
\begin{align*}
\V[ D' \mid \mathcal{S}_2] &= \V\left[  \sum_{\stackrel{j=2}{j\neq 3}}^m \mu_j \hat{r}_n (h_j(\xi_n) - h_j(\hat{\xi}_n)) \given \mathcal{S}_2 \right]  \\
&\leq  m \sum_{\stackrel{j=2}{j\neq 3}}^m \mu_j^2 \E[ \hat{r}_n^2 (h_j(\xi_n) - h_j(\hat{\xi}_n))^2 \mid \mathcal{S}_2] .
\end{align*}
Recall the Gaussian decompositions $\xi_n = \alpha \hat{r}_n + u_n$ and $\hat{x}_n = \hat{\alpha} \hat{r}_n + \hat{u}_n$ where $u_n, \hat{u}_n$ are conditionally independent of $r_n$ given $\mathcal{S}_2$. Condition on $\mathcal{S}_2, u_n, \hat{u}_n$ and use Stein's lemma and the tower property of conditional expectations to obtain
\begin{align*}
 \E[ \hat{r}_n^2 (h_j(\xi_n) - & h_j(\hat{\xi}_n))^2 \mid \mathcal{S}_2]  = \E[\hat{r}_n \mid \mathcal{S}_2] \E[( h_j (\xi_n) - h_j(\hat{\xi}_n))^2  \mid \mathcal{S}_2]  \\
 &+ 2 (\E[\hat{r}_n \mid \mathcal{S}_2])^2 j \E[( \alpha h_{j-1} (\xi_n) - \hat{\alpha} h_{j-1}(\hat{\xi}_n))^2  \mid \mathcal{S}_2] \\
 &+ 2 (\E[\hat{r}_n \mid \mathcal{S}_2])^2 \sqrt{j (j-1)} \E[ ( h_j(\xi) - h_j(\hat{\xi}) )(  \alpha^2h_{j-2}(\xi) -\hat{\alpha}^2 h_{j-2}(\hat{\xi}) ) \mid \mathcal{S}_2] 
\end{align*}
In light of Lemma \ref{lemma:mean-squares}, the first summand is $\mathcal{O}_p(j\sqrt{s\log(p) / n})$ while the second summand is $\mathcal{O}_p( j^2 \sqrt{s \log(p) / n})$. After applying the inequality $2ab \leq a^2 + b^2$ we find that the third summand is also $\mathcal{O}_p(j^2 \sqrt{s \log(p) / n})$ and therefore, plugging these into the upper bound on $\V[D' \mid \mathcal{S}_2]$ yields
\begin{align*}
\V[D' \mid \mathcal{S}_2] &=\mathcal{O}_p\left(  m \sqrt{\frac{s\log(p)}{n}}  \sum_{j=2, j\neq 3}^m j^2 \mu_j^2 \right).
\end{align*}
Using the Cauchy-Schwarz inequality and Lemma \ref{decay}, we obtain
\begin{align*}
 \sum_{j=2}^m j^2 \mu_j^2 &\leq  \left( \sum_{j=2}^m (j \mu_j^2)^2 \right)^\frac12 \left( \sum_{j=2}^m j^2 \right)^\frac12
 \lesssim \left(  \sum_{j=2}^m j \mu_j^2 \right) m^\frac32 \leq L^2 m^\frac32.
 \end{align*}
 By assumption $m^\frac52 \sqrt{s \log(p)} \leq \sqrt{\log(n)} \sqrt{s \log(p)} = o(\sqrt{n})$ and $L^2$ is bounded away from infinity. This implies that $\V[D' \mid \mathcal{S}_2] = o_p(1)$ which together with $ \E[D' \mid \mathcal{S}_2] = o_p(1)$ proves $D' = o_p(1)$.

\textbf{Step 6. (F).} Finally, note that the term $F$ does not depend on $\hat{\tau}$ or $\hat{\Sigma}$ and the rate of growth of $m$ is slower than required in Theorem \ref{thm-efficient} and therefore the proof we provided in the known $\Sigma$ case can be applied here to show that $F$ is $o_p(1)$. The only difference is that $r_i$ has to be replaced with $\hat{r}_i$ and the expectations will be conditional on the second subsample $\mathcal{S}_2$.

\begin{proposition}\label{variance_reduction}
Suppose that $ y = g( \langle x, \tau \rangle) + e$ where $\E[e \mid x ] = 0$ and the link function has a Hermite basis expansion $g = \sum_{j = 0}^\infty \mu_j h_j$. Then we have
\begin{align*}
\E[r^2 (y - \langle x, \beta \rangle)^2] - \E[ r^2 e^2] = \sum_{\stackrel{ j=0}{ j \neq 1}}^\infty \left[ \E[r^2] \mu_{j}^2 + 2(\tau_1 \E[r^2])^2 \left( j \mu_{j}^2 +   \sqrt{(j+1)(j+2)} \mu_j \mu_{j+2} \right) \right].
\end{align*}
\end{proposition}
\begin{proof}
It is easy to see that since $\E[e \mid x] = 0$ and $\beta = \mu_1 \tau$, we have
\begin{align*}
\E[r^2 (y - \langle x, \beta \rangle)^2] - \E[ r^2 e^2]  = \E[ r^2 \left(g(\langle x, \tau \rangle) - \mu_1 \langle x, \tau \rangle \right)^2].
\end{align*}
Let $\tilde{g}(\langle x, \tau \rangle) := g(\langle x, \tau \rangle) - \mu_1 \langle x, \tau \rangle  = \mu_0 + \sum_{j=2}^\infty \mu_j h_j(\langle x, \tau \rangle)$. We can write a Gaussian decomposition $ \xi := \langle x, \tau \rangle = \tau_1 r + u$ where $u$ is independent of $r$. Then applying Stein's lemma twice (conditionally given $u$) and then using the tower property of conditional expectations yields
\begin{align*}
\E[ r^2 \tilde g^2(\alpha r + u)] &= \E[r^2] \E[ \tilde g^2(\xi)] + 2 (\tau_1 \E[r^2])^2 \left(   \E[ \tilde g'^2(\xi)] +  \E[ \tilde g(\xi) \tilde g''(\xi)] \right)\\
&= \E[r^2] \sum_{\stackrel{ j=0}{ j \neq 1}}^\infty \mu_{j}^2 + 2(\tau_1 \E[r^2])^2 \sum_{\stackrel{ j=0}{ j \neq 1}}^\infty j \mu_{j}^2 +   \sqrt{(j+1)(j+2)} \mu_j \mu_{j+2}\\
&=  \sum_{\stackrel{ j=0}{ j \neq 1}}^\infty \left[ \E[r^2] \mu_{j}^2 + 2(\tau_1 \E[r^2])^2 \left( j \mu_{j}^2 +   \sqrt{(j+1)(j+2)} \mu_j \mu_{j+2} \right) \right].
\end{align*}
\end{proof}
\section{Simulations}
\label{simulations}
In this subsection we present the coverage rates of confidence intervals based on the debiased estimator $\tilde{\beta}$ defined by (\ref{est-def-known}) and (\ref{est-def-unknown}). We consider the combinations $n \in \{200, 500\}$, $s \in \{5,10\}$ and design covariance matrices $\Sigma_\kappa$ with $(\Sigma_\kappa)_{ij} = \kappa^{|i-j|}$ for $\kappa \in \{0, 0.5\}$ and the number of covariates $p = 2n$. In each case $\tau$ is defined by $\tau = \tilde{\tau}/\|\Sigma_\kappa^{\frac{1}{2}} \tilde{\tau} \|_2$, where
\begin{align*}
\tilde{\tau}_j = \begin{cases}
s - j +1 & : 1 \leq j \leq s,\\
0 & : s < j \leq p.
\end{cases}
\end{align*}

We also consider two different link functions:
\begin{align*}
\textbf{Model 1:} \, \, y_i &=\operatorname{sign}(\langle x_i, \tau \rangle) + \epsilon_i, \, \epsilon_i \sim N(0,1) \\
\textbf{Model 2:} \, \, y_i &= U_i \cdot e^{\langle x_i, \tau\rangle},
\end{align*}
where $U_1, \dots, U_n$ are iid draws from the exponential distribution with rate one, independent of $x_i$.

\textbf{Construction of Confidence Intervals.}\footnote{The R code for simulations is available at \url{https://github.com/ehamid/sim_debiasing}.} The approximate variance of the debiased estimators in Theorems \ref{thm_known_sig} and \ref{thm-unknown-sigma} is equal to $\E r_n^2 z_n^2 / (\E r_n x_{n,k})^2$. We estimate this variance by replacing the expectation with empirical averages of natural estimates of $r_n, z_n$. The $95\%$ confidence intervals for $\beta_k$ are then constructed using
\begin{align*}
\tilde{\beta}_k \pm q_{0.025} \cdot \left(\frac{\sqrt{\sum_{i=1}^n (y_i - x_i^T \hat{\beta})^2 \hat{r}_i^2}}{\sum_{i=1}^n \hat{r}_i x_{ik}} \right),
\end{align*}
where $q_{0.025}$ is the $0.025$ quantile of the standard normal distribution, and $\hat{r_i}$ is computed according to equation (\ref{resid_known_Sig}) or (\ref{resid_unkn_sig}) depending on whether or not $\Sigma$ is assumed known. The pilot estimate $\hat{\beta}$ was computed using the lasso, with the tuning parameter found using ten-fold cross-validation.\footnote{The function \texttt{cv.glmnet} in the R package \texttt{glmnet} \citep{Friedman2010} was used.} In the case of unknown $\Sigma$, the tuning parameter of the node-wise lasso (\ref{nodewise})  was chosen by
\begin{align*}
\lambda_k = \min \left\{ \lambda > 0 : \max_{j\neq k} \left|\frac{\sum_{i=1}^n \hat{r}_i x_{ij}}{\sqrt{\sum_{i=1}^n \hat{r}_i^2}} \right| \geq \sqrt{\log(p)} \right\}.
\end{align*}
This choice of $\lambda_k$ is motivated by the fact that $\max_{j\neq k} |\sum_1^n \hat{r}_i x_{ij}|/\sqrt{\sum_{i=1}^n \hat{r}_i^2} = \mathcal{O}_p(\sqrt{\log p})$, and the value of $\lambda_k$ maintains a trade-off between the bias and variance of the debiased estimator, see Table 2 of \cite{zhang2014confidence} for a similar tuning method and a detailed explanation of the trade-off.
The following measures were computed:
\begin{itemize}
	\item $\overline{cov}(S)$: computed by averaging the coverage rates of confidence intervals for non-zero coefficients.
	
	\item $\overline{cov}(S^c)$: computed by averaging the coverage rates of confidence intervals for 10 randomly chosen (at each of 200 replicates) coefficients in $S^c$.
	
	\item  $\bar{l}(S)$ average length of confidence intervals for coefficients in $S$.
	\item $\bar{l}(S^c)$ average length of confidence intervals for coefficients in $S^c$, computed by averaging the lengths of confidence intervals for 10 randomly chosen (at each of 200 replicates) coefficients in $S^c$.
	\item FPR: The average False Positive Rate corresponding to 10 randomly chosen coefficients in $S^c$. (Proportion of confidence intervals corresponding to $S^c$ that did not include zero.)
	\item TPR: The average True Positive Rate for coefficients in $S$. (Proportion of confidence intervals over $S$ that did not include zero.)
	\item TPR(j): The True Positive Rate for confidence intervals corresponding to $\beta_j$ for $1 \leq j \leq 5$.
\end{itemize}

Finally, the last four tables report simulation results for Model 1 when $x$ has non-zero mean, $\E x = \mathbf{1}$. In this case the sample column means were used to center $X$ before computing the estimators, i.e. $\tilde{X} := (I - \frac{1}{n}\mathbf{1}\mathbf{1}^T)X$ was used in the procedure.

Some general observations are as follows:
\begin{enumerate}
    \item In general, coverage rates are close to the nominal level (95\%) and coverage is improved as the sample size increases from 200 to 500.
    \item Coverage rates for the null coefficients almost always dominate the coverage rate over the support set $S$.
    \item Introducing correlation among covariates (increasing $\kappa$ from 0 to 0.5) leads to longer confidence intervals and decreases the power of the tests (the TPRs). The coverage rates do not necessarily suffer from this correlation.
    \item Similarly, increasing the number of non-null coefficients from 5 to 10 decreases the power.
\end{enumerate}

\begin{table}[!htbp]
	\centering
	\begin{tabular}{|rl|rrrr|}
		\hline
		\multicolumn{2}{|l|}{\diagbox[]{$(n, \kappa, s)$}{Measure}}  & $\overline{cov}(S)$ & $\overline{cov}(S^c)$ & $\bar{l}(S)$ & $\bar{l}(S^c)$ \\
  \hline
1 & (200, 0, 5) & 0.90 & 0.94 & 0.29 & 0.30 \\
  2 & (200, 0, 10) & 0.91 & 0.94 & 0.30 & 0.30 \\
  3 & (200, 0.5, 5) & 0.92 & 0.95 & 0.39 & 0.40 \\
  4 & (200, 0.5, 10) & 0.92 & 0.95 & 0.39 & 0.39 \\
  5 & (500, 0, 5) & 0.93 & 0.94 & 0.19 & 0.20 \\
  6 & (500, 0, 10) & 0.93 & 0.95 & 0.19 & 0.20 \\
  7 & (500, 0.5, 5) & 0.93 & 0.94 & 0.25 & 0.26 \\
  8 & (500, 0.5, 10) & 0.94 & 0.95 & 0.25 & 0.26 \\
   \hline
	\end{tabular}
	\caption{Coverage results for model 1 when $\Sigma$ is known.}
\end{table}
\begin{table}[!htbp]
	\centering
	\begin{tabular}{|rl|rrrrrrr|}
		\hline
		\multicolumn{2}{|l|}{\diagbox[]{$(n, \kappa, s)$}{Measure}} & FPR & TPR & TPR(1) & TPR(2) & TPR(3) & TPR(4) & TPR(5) \\
  \hline
1 & (200, 0, 5) & 0.06 & 0.78 & 1.00 & 1.00 & 0.96 & 0.72 & 0.22 \\
  2 & (200, 0, 10) & 0.06 & 0.61 & 0.99 & 0.98 & 0.96 & 0.88 & 0.79 \\
  3 & (200, 0.5, 5) & 0.05 & 0.57 & 0.98 & 0.79 & 0.57 & 0.40 & 0.11 \\
  4 & (200, 0.5, 10) & 0.05 & 0.34 & 0.78 & 0.61 & 0.56 & 0.41 & 0.34 \\
  5 & (500, 0, 5) & 0.06 & 0.90 & 1.00 & 1.00 & 1.00 & 0.99 & 0.52 \\
  6 & (500, 0, 10) & 0.05 & 0.78 & 1.00 & 1.00 & 1.00 & 0.99 & 0.98 \\
  7 & (500, 0.5, 5) & 0.06 & 0.73 & 1.00 & 1.00 & 0.91 & 0.59 & 0.17 \\
  8 & (500, 0.5, 10) & 0.05 & 0.55 & 0.99 & 0.93 & 0.85 & 0.78 & 0.65 \\
   \hline
	\end{tabular}
	\caption{Average True/False positive rates for model 1 when $\Sigma$ is known.}
\end{table}
\begin{table}[!htbp]
	\centering
	\begin{tabular}{|rl|rrrr|}
		\hline
		\multicolumn{2}{|l|}{\diagbox[]{$(n, \kappa, s)$}{Measure}} & $\overline{cov}(S)$ & $\overline{cov}(S^c)$ & $\bar{l}(S)$ & $\bar{l}(S^c)$ \\
  \hline
1 & (200, 0, 5) & 0.92 & 0.94 & 0.38 & 0.38 \\
  2 & (200, 0, 10) & 0.92 & 0.94 & 0.39 & 0.39 \\
  3 & (200, 0.5, 5) & 0.91 & 0.95 & 0.46 & 0.47 \\
  4 & (200, 0.5, 10) & 0.92 & 0.95 & 0.46 & 0.46 \\
  5 & (500, 0, 5) & 0.95 & 0.94 & 0.25 & 0.26 \\
  6 & (500, 0, 10) & 0.93 & 0.95 & 0.25 & 0.25 \\
  7 & (500, 0.5, 5) & 0.93 & 0.94 & 0.31 & 0.32 \\
  8 & (500, 0.5, 10) & 0.94 & 0.95 & 0.31 & 0.31 \\
   \hline
	\end{tabular}
	\caption{Coverage results for model 1 when $\Sigma$ is unknown.}
\end{table}
\begin{table}[!htbp]
	\centering
	\begin{tabular}{|rl|rrrrrrr|}
		\hline
		\multicolumn{2}{|l|}{\diagbox[]{$(n, \kappa, s)$}{Measure}} & FPR & TPR & TPR(1) & TPR(2) & TPR(3) & TPR(4) & TPR(5) \\
  \hline
1 & (200, 0, 5) & 0.06 & 0.71 & 1.00 & 0.97 & 0.90 & 0.51 & 0.19 \\
  2 & (200, 0, 10) & 0.06 & 0.54 & 0.99 & 0.97 & 0.85 & 0.76 & 0.58 \\
  3 & (200, 0.5, 5) & 0.05 & 0.54 & 0.91 & 0.76 & 0.55 & 0.34 & 0.14 \\
  4 & (200, 0.5, 10) & 0.05 & 0.34 & 0.67 & 0.57 & 0.59 & 0.45 & 0.35 \\
  5 & (500, 0, 5) & 0.06 & 0.84 & 1.00 & 1.00 & 1.00 & 0.90 & 0.31 \\
  6 & (500, 0, 10) & 0.05 & 0.71 & 1.00 & 1.00 & 1.00 & 0.97 & 0.93 \\
  7 & (500, 0.5, 5) & 0.06 & 0.69 & 1.00 & 0.97 & 0.83 & 0.49 & 0.15 \\
  8 & (500, 0.5, 10) & 0.05 & 0.50 & 0.94 & 0.85 & 0.82 & 0.67 & 0.56 \\
   \hline
	\end{tabular}
	\caption{Average True/False positive rates for model 1 when $\Sigma$ is unknown.}
\end{table}
\begin{table}[!htbp]
	\centering
	\begin{tabular}{|rl|rrrr|}
		\hline
		\multicolumn{2}{|l|}{\diagbox[]{$(n, \kappa, s)$}{Measure}} & $\overline{cov}(S)$ & $\overline{cov}(S^c)$ & $\bar{l}(S)$ & $\bar{l}(S^c)$ \\
  \hline
1 & (200, 0, 5) & 0.88 & 0.96 & 1.09 & 0.87 \\
  2 & (200, 0, 10) & 0.89 & 0.95 & 0.98 & 0.87 \\
  3 & (200, 0.5, 5) & 0.92 & 0.96 & 1.18 & 1.14 \\
  4 & (200, 0.5, 10) & 0.93 & 0.95 & 1.12 & 1.10 \\
  5 & (500, 0, 5) & 0.90 & 0.96 & 0.71 & 0.56 \\
  6 & (500, 0, 10) & 0.91 & 0.96 & 0.63 & 0.55 \\
  7 & (500, 0.5, 5) & 0.93 & 0.95 & 0.80 & 0.75 \\
  8 & (500, 0.5, 10) & 0.93 & 0.96 & 0.74 & 0.71 \\
   \hline
	\end{tabular}
	\caption{Coverage results for model 2 when $\Sigma$ is known.}
\end{table}
\begin{table}[!htbp]
	\centering
	\begin{tabular}{|rl|rrrrrrr|}
		\hline
		\multicolumn{2}{|l|}{\diagbox[]{$(n, \kappa, s)$}{Measure}} & FPR & TPR & TPR(1) & TPR(2) & TPR(3) & TPR(4) & TPR(5) \\
  \hline
1 & (200, 0, 5) & 0.04 & 0.63 & 0.98 & 0.93 & 0.70 & 0.38 & 0.15 \\
  2 & (200, 0, 10) & 0.04 & 0.46 & 0.89 & 0.82 & 0.73 & 0.59 & 0.56 \\
  3 & (200, 0.5, 5) & 0.04 & 0.40 & 0.74 & 0.52 & 0.43 & 0.23 & 0.07 \\
  4 & (200, 0.5, 10) & 0.05 & 0.20 & 0.48 & 0.41 & 0.29 & 0.23 & 0.15 \\
  5 & (500, 0, 5) & 0.04 & 0.81 & 1.00 & 0.99 & 0.99 & 0.79 & 0.27 \\
  6 & (500, 0, 10) & 0.04 & 0.67 & 1.00 & 0.99 & 0.98 & 0.94 & 0.90 \\
  7 & (500, 0.5, 5) & 0.05 & 0.61 & 1.00 & 0.90 & 0.69 & 0.35 & 0.10 \\
  8 & (500, 0.5, 10) & 0.04 & 0.38 & 0.81 & 0.74 & 0.57 & 0.51 & 0.41 \\
   \hline
	\end{tabular}
	\caption{Average True/False positive rates for model 2 when $\Sigma$ is known.}
\end{table}
\begin{table}[!htbp]
	\centering
	\begin{tabular}{|rl|rrrr|}
		\hline
		\multicolumn{2}{|l|}{\diagbox[]{$(n, \kappa, s)$}{Measure}} & $\overline{cov}(S)$ & $\overline{cov}(S^c)$ & $\bar{l}(S)$ & $\bar{l}(S^c)$ \\
  \hline
1 & (200, 0, 5) & 0.88 & 0.96 & 1.27 & 1.08 \\
  2 & (200, 0, 10) & 0.91 & 0.95 & 1.20 & 1.09 \\
  3 & (200, 0.5, 5) & 0.94 & 0.95 & 1.34 & 1.22 \\
  4 & (200, 0.5, 10) & 0.94 & 0.95 & 1.28 & 1.25 \\
  5 & (500, 0, 5) & 0.92 & 0.96 & 0.85 & 0.72 \\
  6 & (500, 0, 10) & 0.92 & 0.95 & 0.77 & 0.71 \\
  7 & (500, 0.5, 5) & 0.93 & 0.95 & 0.94 & 0.87 \\
  8 & (500, 0.5, 10) & 0.94 & 0.96 & 0.89 & 0.86 \\
   \hline
	\end{tabular}
	\caption{Coverage results for model 2 when $\Sigma$ is unknown.}
\end{table}
\begin{table}[!htbp]
	\centering
	\begin{tabular}{|rl|rrrrrrr|}
		\hline
		\multicolumn{2}{|l|}{\diagbox[]{$(n, \kappa, s)$}{Measure}} & FPR & TPR & TPR(1) & TPR(2) & TPR(3) & TPR(4) & TPR(5) \\
  \hline
1 & (200, 0, 5) & 0.04 & 0.53 & 0.94 & 0.78 & 0.54 & 0.34 & 0.08 \\
  2 & (200, 0, 10) & 0.05 & 0.35 & 0.75 & 0.66 & 0.51 & 0.41 & 0.39 \\
  3 & (200, 0.5, 5) & 0.05 & 0.34 & 0.67 & 0.54 & 0.30 & 0.15 & 0.07 \\
  4 & (200, 0.5, 10) & 0.05 & 0.21 & 0.37 & 0.40 & 0.39 & 0.24 & 0.28 \\
  5 & (500, 0, 5) & 0.04 & 0.76 & 1.00 & 1.00 & 0.96 & 0.67 & 0.15 \\
  6 & (500, 0, 10) & 0.05 & 0.59 & 0.99 & 0.94 & 0.91 & 0.89 & 0.71 \\
  7 & (500, 0.5, 5) & 0.05 & 0.56 & 0.94 & 0.81 & 0.63 & 0.30 & 0.10 \\
  8 & (500, 0.5, 10) & 0.04 & 0.32 & 0.71 & 0.62 & 0.44 & 0.42 & 0.33 \\
   \hline
	\end{tabular}
	\caption{Average True/False positive rates for model 2 when $\Sigma$ is unknown.}
\end{table}
\begin{table}[!htbp]
	\centering
	\begin{tabular}{|rl|rrrr|}
		\hline
		\multicolumn{2}{|l|}{\diagbox[]{$(n, \kappa, s)$}{Measure}}  & $\overline{cov}(S)$ & $\overline{cov}(S^c)$ & $\bar{l}(S)$ & $\bar{l}(S^c)$ \\
  \hline
1 & (200, 0, 5) & 0.90 & 0.94 & 0.29 & 0.30 \\
  2 & (200, 0, 10) & 0.91 & 0.94 & 0.30 & 0.31 \\
  3 & (200, 0.5, 5) & 0.92 & 0.95 & 0.39 & 0.40 \\
  4 & (200, 0.5, 10) & 0.93 & 0.95 & 0.39 & 0.39 \\
  5 & (500, 0, 5) & 0.93 & 0.94 & 0.19 & 0.20 \\
  6 & (500, 0, 10) & 0.93 & 0.95 & 0.19 & 0.20 \\
  7 & (500, 0.5, 5) & 0.93 & 0.94 & 0.25 & 0.26 \\
  8 & (500, 0.5, 10) & 0.94 & 0.95 & 0.25 & 0.26 \\
   \hline
   \end{tabular}
	\caption{Coverage results for model 1 when $\Sigma$ is known and $\E [x_i] = \mathbf{1}$.}
\end{table}
\begin{table}[!htbp]
	\centering
	\begin{tabular}{|rl|rrrrrrr|}
		\hline
		\multicolumn{2}{|l|}{\diagbox[]{$(n, \kappa, s)$}{Measure}} & FPR & TPR & TPR(1) & TPR(2) & TPR(3) & TPR(4) & TPR(5) \\
  \hline
1 & (200, 0, 5) & 0.06 & 0.78 & 1.00 & 1.00 & 0.97 & 0.68 & 0.24 \\
  2 & (200, 0, 10) & 0.06 & 0.61 & 0.99 & 0.99 & 0.95 & 0.88 & 0.75 \\
  3 & (200, 0.5, 5) & 0.05 & 0.56 & 0.96 & 0.78 & 0.57 & 0.37 & 0.12 \\
  4 & (200, 0.5, 10) & 0.05 & 0.34 & 0.77 & 0.61 & 0.53 & 0.42 & 0.34 \\
  5 & (500, 0, 5) & 0.06 & 0.90 & 1.00 & 1.00 & 1.00 & 0.98 & 0.51 \\
  6 & (500, 0, 10) & 0.05 & 0.78 & 1.00 & 1.00 & 1.00 & 1.00 & 0.99 \\
  7 & (500, 0.5, 5) & 0.06 & 0.73 & 1.00 & 0.99 & 0.91 & 0.58 & 0.17 \\
  8 & (500, 0.5, 10) & 0.05 & 0.55 & 0.99 & 0.93 & 0.86 & 0.78 & 0.65 \\
   \hline
	\end{tabular}
	\caption{Average True/False positive rates for model 1 when $\Sigma$ is known and $\E [x_i] = \mathbf{1}$.}
\end{table}
\begin{table}[!htbp]
	\centering
	\begin{tabular}{|rl|rrrr|}
		\hline
		\multicolumn{2}{|l|}{\diagbox[]{$(n, \kappa, s)$}{Measure}}  & $\overline{cov}(S)$ & $\overline{cov}(S^c)$ & $\bar{l}(S)$ & $\bar{l}(S^c)$ \\
  \hline
1 & (200, 0, 5) & 0.91 & 0.94 & 0.38 & 0.39 \\
  2 & (200, 0, 10) & 0.93 & 0.94 & 0.39 & 0.40 \\
  3 & (200, 0.5, 5) & 0.91 & 0.95 & 0.47 & 0.48 \\
  4 & (200, 0.5, 10) & 0.92 & 0.95 & 0.46 & 0.46 \\
  5 & (500, 0, 5) & 0.95 & 0.95 & 0.25 & 0.26 \\
  6 & (500, 0, 10) & 0.94 & 0.94 & 0.25 & 0.26 \\
  7 & (500, 0.5, 5) & 0.93 & 0.95 & 0.31 & 0.32 \\
  8 & (500, 0.5, 10) & 0.93 & 0.95 & 0.31 & 0.31 \\
   \hline
	\end{tabular}
	\caption{Coverage results for model 1 when $\Sigma$ is unknown and $\E[ x_i] = \mathbf{1}$.}
\end{table}
\begin{table}[!htbp]
	\centering
	\begin{tabular}{|rl|rrrrrrr|}
		\hline
		\multicolumn{2}{|l|}{\diagbox[]{$(n, \kappa, s)$}{Measure}} & FPR & TPR & TPR(1) & TPR(2) & TPR(3) & TPR(4) & TPR(5) \\
  \hline
1 & (200, 0, 5) & 0.06 & 0.71 & 1.00 & 0.97 & 0.88 & 0.54 & 0.18 \\
  2 & (200, 0, 10) & 0.06 & 0.53 & 0.95 & 0.95 & 0.85 & 0.72 & 0.56 \\
  3 & (200, 0.5, 5) & 0.05 & 0.54 & 0.90 & 0.76 & 0.56 & 0.35 & 0.15 \\
  4 & (200, 0.5, 10) & 0.05 & 0.34 & 0.66 & 0.56 & 0.56 & 0.47 & 0.36 \\
  5 & (500, 0, 5) & 0.05 & 0.84 & 1.00 & 1.00 & 1.00 & 0.89 & 0.32 \\
  6 & (500, 0, 10) & 0.06 & 0.71 & 1.00 & 1.00 & 1.00 & 0.96 & 0.94 \\
  7 & (500, 0.5, 5) & 0.05 & 0.68 & 0.99 & 0.96 & 0.83 & 0.47 & 0.15 \\
  8 & (500, 0.5, 10) & 0.05 & 0.50 & 0.93 & 0.83 & 0.80 & 0.67 & 0.57 \\
   \hline
	\end{tabular}
	\caption{Average True/False positive rates for model 1 when $\Sigma$ is unknown and $\E [x_i] = \mathbf{1}$.}
\end{table}

\FloatBarrier
\vskip 0.2in
\bibliography{biblio}

\end{document}